\documentclass[12pt]{amsart}
\usepackage{amsmath, amsthm, amssymb,cite}
\usepackage{fullpage}
\usepackage{color}
\usepackage[colorlinks=true]{hyperref}

\newtheorem{theorem}{Theorem}[section]
\newtheorem{lemma}{Lemma}
\newtheorem{proposition}[lemma]{Proposition}

\newtheorem{definition}[lemma]{Definition}

\numberwithin{lemma}{section}

\setcounter{tocdepth}{1}
\allowdisplaybreaks

\numberwithin{equation}{section}

\newcommand{\R}{{\mathbb R}}

\renewcommand{\H}{{\mathcal H }}
\newcommand{\nP}{{\mathbf P}}
\newcommand{\CalA}{{\mathcal A}}
\newcommand{\CalB}{{\mathcal B}}

\newcommand{\ua}{{\underline a}}

\newcommand{\ub}{{\underline b}}

\newcommand{\W}{{\mathbf W}}

%%%%%%%%%%%%%%%%%%%%%%%%%%%%%%%%%%%%%%%
\author{Lizhe Wan}
\address{Department of Mathematics, University of Wisconsin - Madison}
\curraddr{}
\email{lwan33@wisc.edu}

\keywords{water waves with constant vorticity,  energy estimate, the Strichartz estimate.}
\subjclass[2020]{76B15, 35Q31}
\pagestyle{plain}

\begin{document}

\title{On the capillary water waves with constant vorticity}

\begin{abstract}
This article is devoted to the study of local well-posedness for deep  water waves with constant vorticity in two space dimensions on the real line.
The water waves can be paralinearized and written as a quasilinear dispersive system of equations. 
By using the energy estimate and the Strichartz estimate, we show that for $s> \frac{5}{4}$, the gravity-capillary water wave system with constant vorticity is locally well-posed in $\mathcal{H}^{s}(\mathbb{R})$.
\end{abstract}

\maketitle

\tableofcontents

\section{Introduction}
In this article, we consider the two-dimensional inviscid incompressible gravity-capillary water wave system with constant vorticity in infinite depth. 
The water waves can be described using the incompressible Euler's equations with free boundary conditions on the fluid surface. 

We first briefly mention the free boundary Euler equations and the water waves with constant vorticity $\gamma$ in the Zakharov-Craig-Sulem formulation. 
Let the fluid domain at time $t$ be denoted by $\Omega_t$, and the boundary at time $t$ by $\Gamma_t$. 
The boundary $\Gamma_t$ in our setting is  either asymptotically flat at infinity or periodic with zero mean. 
The fluid velocity is denoted by $\mathbf{u}(t,x,y) = (u(t,x,y), v(t,x,y))$ and the pressure is denoted by $p(t,x,y)$. 
Then $(\mathbf{u}, p)$ solve the following incompressible Euler equations inside $\Omega_t$, 
\begin{equation*}
\left\{
             \begin{array}{lr}
            u_t +uu_x +vu_y = -p_x &  \\
            v_t + uv_x +vv_y = -p_y -g& \\
            u_x +v_y =0 & \\
            \omega = v_x - u_y = \gamma.
             \end{array}
\right.
\end{equation*}
On the  free upper boundary $\Gamma_t$ we have the kinematic boundary condition 
\begin{equation*}
\partial_t+ \mathbf{u} \cdot \nabla \text{ is tangent to } \bigcup \Gamma_t,
\end{equation*}
and also the dynamic boundary condition
\begin{equation*}
 p = -2 \sigma {\bf H}  \ \ \text{ on } \Gamma_t.
\end{equation*}
Here ${\bf H}$ is the mean curvature of the boundary, $g \geq 0$ is the gravitational constant and $\sigma>0$ is the coefficient of the surface tension.

To rewrite the free boundary Euler system, one can use  $\mathbf{u} = (\gamma y + \phi_x, \phi_y)$ for some generalized velocity potential $\phi(t, x, y)$ that is harmonic with appropriate decay at infinity or periodic with zero mean.
Then at each time $t$, $\phi(t,x,y)$ solves the Laplace equation, and $\phi$ inside $\Omega_t$ is determined by its trace on the free boundary $\Gamma_t$. 
Following the work of Zakharov \cite{zakharov1968stability} and Craig-Sulem \cite{MR1158383}, we denote by $\eta$ the height of the water surface as a function of time and the horizontal coordinate, and $\psi (t,x)=\phi (t,x,\eta(t,x))\in \mathbb{R}$ to be the trace of the generalized velocity potential $\phi$ on the boundary. 
The fluid dynamics can be expressed in terms of a one-dimensional evolution of  the pair of unknowns
$(\eta,\psi)$:
\begin{equation*}
\left\{
\begin{aligned}
& \partial_t \eta - G(\eta) \psi = \gamma \eta \eta_x \\
& \partial_t \psi +g\eta  +\frac12 \psi_x^2 - \frac12 
\frac{( \eta_x   \psi_x +G(\eta) \psi)^2}{1 + \eta_x^2} = \sigma \left( \frac{\eta_x}{\sqrt{1+\eta_x^2}}\right)_x + \gamma \eta \psi_x + \gamma \partial_x^{-1}G(\eta) \psi,
\end{aligned}
\right.
\end{equation*}
where $G(\eta)$ is the Dirichlet-Neumann operator
\begin{equation*}
    G(\eta)\psi : = (-\phi_x \eta_x + \phi_y )|_{y = \eta(x)}.
\end{equation*}
The first equation of the system  is the equation for the kinematic boundary condition, and the second equation is the equation for the dynamic boundary condition.
This is the Zakharov-Craig-Sulem formulation of the   water waves.

\subsection{Water waves in holomorphic coordinates}
In this article, we will not use the Zakharov-Craig-Sulem formulation, instead, we will work in the holomorphic coordinates introduced by Hunter-Ifrim-Tataru \cite{MR3535894}.
The holomorphic coordinates are widely used to study a variety of water waves problems, see for example Ifrim-Tataru and Ai-Ifrim-Tataru \cite{MR3535894, MR3499085, MR3625189,  ai2023dimensional, MR4462478, MR4483135, MR3667289, MR3869381}.

As seen in the water wave system below, we no longer need to analyze the Dirichlet-Neumann operator in the holomorphic coordinates.
Let $\mathbf{P}$ be the projection onto negative frequencies, namely
\begin{equation*}
    \mathbf{P} := \frac{1}{2}(\mathbf{I} - iH),
\end{equation*}
with $H$ being the Hilbert transform.
Holomorphic functions are defined to be the functions whose Fourier transforms are supported on $(-\infty,0]$.
In other words, they satisfy the relation $\nP f = f$.
Similarly, we define $\bar{\nP}$ to be the projection onto positive frequencies:
\begin{equation*}
    \bar{\nP} := \frac{1}{2}(\mathbf{I} + iH) = \mathbf{I} -\nP.
\end{equation*}
Functions such that $\bar{\nP} f = f$ are called anti-holomorphic functions. 
Anti-holomorphic functions are complex conjugates of holomorphic functions.

The water waves can be formulated in the class of holomorphic functions using the pair of unknowns $(W(t,\alpha), Q(t,\alpha))$.
 $W$ represents the holomorphic position and  $Q$ represents the holomorphic velocity potential.
In the rest of this article, we will use $\alpha$ to denote the holomorphic spacial variable.
The derivation of the capillary water wave equations in holomorphic coordinates can be found in Ifrim-Tataru \cite{MR3667289} and Appendix B of Ifrim-Tataru \cite{MR3869381}. 
The system is given by
\begin{equation}
\left\{
             \begin{array}{lr}
             W_t + (W_\alpha +1)\underline{F} +i\dfrac{\gamma}{2}W = 0 &  \\
             Q_t - igW +\underline{F}Q_\alpha +i\gamma Q +\mathbf{P}\left[\dfrac{|Q_\alpha|^2}{J}\right]- i\dfrac{\gamma}{2}T_1-2\sigma \mathbf{P}\Im\left[ \dfrac{W_{\alpha \alpha}}{J^{\frac 1 2}(1+W_{\alpha})}\right]  =0,&  
             \end{array}
\right.\label{e:CVWW}
\end{equation}
where $J := |1+ W_\alpha|^2$ is the Jacobian, and
\begin{equation*}
\begin{aligned}
&F: = \mathbf{P}\left[\frac{Q_\alpha - \Bar{Q}_\alpha}{J}\right], \quad &F_1 = \mathbf{P}\left[\frac{W}{1+\Bar{W}_\alpha}+\frac{\Bar{W}}{1+W_\alpha}\right],\\
&\underline{F}: =F- i \frac{\gamma}{2}F_1,  \quad &T_1: = \mathbf{P}\left[\frac{W\Bar{Q}_\alpha}{1+\Bar{W}_\alpha}-\frac{\Bar{W}Q_\alpha}{1+W_\alpha}\right].
\end{aligned}
\end{equation*}

The system \eqref{e:CVWW} has a conserved  energy
\begin{equation*}
\mathcal{E}(W, Q)= \Re\int Q\bar{Q}_{\alpha}
+2\sigma (J^{\frac{1}{2}} - 1 -\Re W_\alpha) + g|W|^2(1+W_\alpha) + \gamma Q_\alpha (\Im W)^2 - \frac{\gamma^3}{2i}|W|^2W(1+W_\alpha) \, d\alpha. 
\end{equation*}
It also has a conserved horizontal momentum
\begin{equation*}
    \mathcal{P}(W,Q) = \int -i(Q\bar{W}_\alpha-\bar{Q}W_\alpha) -\gamma |W|^2 + \frac{\gamma}{2}(W^2 \bar{W}_\alpha + \bar{W}^2 W_\alpha) \,d\alpha.
\end{equation*}
A simplified model of \eqref{e:CVWW} is its linearization around the zero solution
\begin{equation}
\left\{
             \begin{array}{lr}
             w_t + q_\alpha = 0 &  \\
             q_t+i\gamma q -igw + i\sigma w_{\alpha\alpha} =0,&  
             \end{array}
\right.\label{e:ZeroLinear}
\end{equation}
restricted to holomorphic functions.
By eliminating the linearized unknown $q$, \eqref{e:ZeroLinear}  can be written as a linear dispersive equation pf $w$:
\begin{equation*}
    w_{tt}+ i\gamma w_t + igw_\alpha - i\sigma w_{\alpha\alpha \alpha} =0.
\end{equation*}
Its dispersion relation is 
\begin{equation*}
    \tau^2 + \gamma \tau +g\xi + \sigma\xi^3 =0, \quad \xi\leq 0.
\end{equation*}

A conserved energy of the linear system \eqref{e:ZeroLinear} is given by
\begin{equation*}
    \mathcal{E}_0(w,q) = \int \sigma|w_\alpha|^2 - \frac{i}{2}(q\bar{q}_\alpha - \bar{q}q_\alpha)\, d\alpha = \sigma\|w\|_{\dot{H}^1}^2 + \|q\|_{\dot{H}^{\frac{1}{2}}}^2. 
\end{equation*}
This conserved energy suggests the functional framework to study \eqref{e:CVWW}.
The system \eqref{e:ZeroLinear} is well-posed in $\dot{H}^1 \times \dot{H}^{\frac{1}{2}}$ space.
To measure the regularity of the solution we will use the product Sobolev space $\mathcal{H}^{s}$ endowed with the norm
\begin{equation*}
    \| (w,q)\|_{\mathcal{H}^{s}} := \|(w,q) \|_{ H^{s+\frac{1}{2}}\times H^s}, \quad s\in \mathbb{R}.
\end{equation*}
We will also use the product H\"{o}lder space $\mathcal{W}^r$ endowed with the norm
\begin{equation*}
\|(w,q) \|_{\mathcal{W}^r} := \|(w,q) \|_{W^{r+\frac{1}{2},\infty}\times W^{r,\infty}}, \quad r\in \mathbb{R}.
\end{equation*}

The  system \eqref{e:CVWW} is a nonlinear degenerate hyperbolic system.
By differentiation, it can be diagonalized and converted into a quasilinear system.
Following the setup in \cite{MR3667289}, we work with holomorphic functions
\begin{equation*}
(\W, R): = \left(W_\alpha, \frac{Q_\alpha}{1+W_\alpha}\right).
\end{equation*}
The holomorphic function $R$ above has an intrinsic meaning; it is the complex velocity restricted on the water surface.
We also need to define three auxiliary functions.
The first one $\ua$ is the \textit{frequency-shift}, and it is given by
 \begin{equation*}
     \ua := a + \frac{\gamma}{2}a_1, \quad a := i (\bar{\nP}[\bar{R}R_\alpha] - \nP[R\bar{R}_\alpha]), \quad a_1 := R+ \bar{R} -N,
 \end{equation*}
 where 
 \begin{equation*}
     N: = \nP[W\bar{R}_\alpha - \bar{\W}R]+ \bar{\nP}[\bar{W}R_\alpha - \W\bar{R}].
 \end{equation*}
The second auxiliary function we need is the \textit{advection velocity} $\ub$,
\begin{equation*}
    \ub := b-i\frac{\gamma}{2} b_1, \quad b: = \nP \left[\frac{Q_\alpha}{J}\right] + \bar{\nP} \left[\frac{\bar{Q}_\alpha}{J} \right], \quad b_1: = \nP \left[ \frac{W}{1+\bar{\W}}\right]- \bar{\nP} \left[ \frac{\bar{W}}{1+\W}\right].
\end{equation*}
The third  auxiliary function $\underline{M}$ is given by
\begin{align*}
    & M := \frac{R_\alpha}{1+\bar{\W}}+ 
    \frac{\bar{R}_\alpha}{1+\W} - b_\alpha = \bar{\nP}[\bar{R}Y_\alpha - R_\alpha \bar{Y}] + \nP[R\bar{Y}_\alpha -\bar{R}_\alpha Y], \\
  & M_1 := \W -\bar{\W} - b_{1,\alpha} = \nP[W\bar{Y}]_\alpha - \bar{\nP}[\bar{W}Y]_\alpha, \qquad \underline{M} := M- i \frac{\gamma}{2}M_1.
\end{align*}
Note that $\ub_\alpha$ satisfies the relation
\begin{equation}
    \ub_\alpha = \frac{R_\alpha}{1+\bar{\W}}+ \frac{\bar{R}_\alpha}{1+\W} -i\frac{\gamma}{2}(\W -\bar{\W}) -\underline{M}. \label{ubalpha}
\end{equation}
Here, $Y:=\dfrac{W_{\alpha}}{1+W_{\alpha}}$. 
Then, using above notations, the pair $(\W,R)$ solves the differentiated system
\begin{equation} \label{e:WR}
\left\{
\begin{aligned}
 & \mathbf{W}_t +\underline{b}\mathbf{W}_\alpha + \dfrac{(1+\mathbf{W})R_\alpha}{1+\Bar{\mathbf{W}}} = (1+\mathbf{W})\underline{M}+i\dfrac{\gamma}{2}\mathbf{W}(\mathbf{W}-\Bar{\mathbf{W}}) \\
& R_t + \underline{b}R_\alpha +i\gamma R - i\dfrac{g\mathbf{W}-a}{1+\mathbf{W}} - \frac{ 2\sigma}{1+\W}\nP\Im \left[ \frac{\W_{ \alpha}}{J^{1/2}(1+\W)}\right]_{\alpha}  =  i\dfrac{\gamma}{2}\dfrac{R\mathbf{W}+\Bar{R}\mathbf{W}
             +N}{1+\mathbf{W}}.
\end{aligned}
\right.
\end{equation}
In the following, we will mostly work with the system \eqref{e:WR} and study its local well-posedness.

\subsection{Some previous results}
The literature on the two-dimensional gravity-capillary water waves is numerous. 
Here we only mention some of the results on the local well-posedness.

For the irrotational gravity-capillary water waves, Nalimov \cite{MR0609882} and Yosihara \cite{MR0660822, MR0728155} initiated  the study of this problem for small smooth data. 
There are  many results since then, see for example the work of Agrawal \cite{MR4244258, MR4684336}, Ambrose-Masmoudi \cite{MR2162781}, Beyer-Gunther \cite{MR1637554},  Coutand-Shkoller \cite{MR2291920}, Lannes \cite{MR3060183}, Ming-Zhang \cite{MR2558419}, M\'{e}sognon-Gireau\cite{MR3592680} and Shatah-Zeng \cite{MR2763036}.
Let us mention that Alazard-Burq-Zuily \cite{MR2805065, MR2931520} worked in Zakharov-Craig-Sulem formulation and used paradifferential calculus to reduce the water wave system to a single paradifferential equation.
As a result, they proved that 2D irrotational gravity-capillary water wave system is locally well-posed  in $\H^s$, for $s> \frac{3}{2}$.
De Poyferr\'{e} and Nguyen refined the paradifferential reduction in \cite{MR3770970}, and proved a Strichartz estimate \cite{MR3487264} that leads to the local well-posedness in $\H^{s}(\mathbb{R})$ for $s>\frac{27}{20}$. 
Nguyen in \cite{MR3724757} then used the paracomposition and the Strichartz estimate to show the local well-posedness for  the 2D irrotational gravity-capillary water waves in $\H^{s}(\mathbb{R})$ for $s>\frac{5}{4}$. 
See also the  work of Ai \cite{ai2023improved} for the Strichartz estimate and its application in the local well-posedness.

For the 2D gravity water wave problem posed in infinite depth and with constant vorticity, the first   local well-posedness for large data, as well as cubic lifespan bounds for small data solutions were obtained by Ifrim and Tataru in \cite{MR3869381}.
Shortly after, Bieri, Miao, Shahshahani and  Wu \cite{bieri2015motion} obtained a similar result for self-gravitating  incompressible fluids with
constant vorticity for smooth initial data in bounded domains.

For the gravity-capillary water waves with non-trivial constant vorticity, to the best of our knowledge, the only well-posedness result is the recent work of \cite{MR4658635} by Berti-Maspero-Murgante.
They showed almost global-in-time existence of small amplitude solutions of the 2D gravity-capillary water wave equations with constant vorticity in the periodic case.
However, their result requires high Sobolev regularity for initial data, and the parameter $\sigma$ needs to avoid a measure zero set to ensure a non-resonance condition.
Apart from the general Cauchy problem, several types of special solutions of \eqref{e:CVWW} are proved to exist, see the work of Groves-Wahl\'{e}n\cite{MR3415532}, Rowan-Wan \cite{rowan2023dimensional, rowan2024} for solitary waves, Wahl\'en \cite{MR2262949}, Martin \cite{MR2969824} for the steady periodic waves and Berti-Franzoi-Maspero  \cite{MR4228858} for quasi-periodic travelling waves solution.
No well-posedness result of \eqref{e:WR} on the real line is known before.

Lastly, the reference list of works pertaining to gravity water waves in 2D with trivial vorticity is exhaustive and by no means we intend is to cover it here. However, in this current work, we also get some inspiration from the very recent result of Ai, Ifrim and Tataru \cite{ai2023dimensional}; here we are mostly inspired by the pointwise bounds derived for the water waves associated quantities.
\subsection{The main results} 
We first define two control norms that will be used.
Let $\epsilon>0$ be a small positive constant.
\begin{align*}
&\CalA : = \|\W \|_{ C^{1+\epsilon}_{*}}+\|R\|_{C_{*}^{\frac{1}{2}}} + \gamma \| \W\|_{ C^{\frac{1}{2}}_{*}}, \\
&\CalB : =  \|\W \|_{C_{*}^{\frac{3}{2}}} + \|R \|_{C_{*}^{1+\epsilon}} +  \gamma \|\W \|_{C_{*}^{1+\epsilon}}+ \gamma \|R\|_{C_{*}^{\frac{1}{2}}}.
\end{align*}
Here the vorticity $\gamma$ can be intuitively viewed as half a derivative as in the gravity water waves \cite{MR3869381, wan2023low}.

The first main result is the a priori energy estimate for the water wave system \eqref{e:WR}.
\begin{theorem} \label{t:EnergyEstimate}
Let $s> 0$.
Suppose $(\W, R)$ solve the water wave system \eqref{e:WR} on $[0, T]$, then we have the energy estimate for any $t\in [0,T]$, 
\begin{equation}
    \frac{d}{dt}\|(\W, R) \|_{\mathcal{H}^s} \lesssim_\CalA (1+\CalB ) \|(\W, R) \|_{\mathcal{H}^s}. \label{EstimateWREnergy}
\end{equation} 
\end{theorem}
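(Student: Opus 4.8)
The plan is to prove the energy estimate via a paradifferential diagonalization of the system \eqref{e:WR}, following the strategy used for gravity water waves in \cite{MR3869381, wan2023low}, but adapting it to the fourth-order capillary dispersion. First I would paralinearize the right-hand side of \eqref{e:WR}: write each quadratic (and cubic) expression as a sum of a paraproduct with the high-frequency factor, which contributes to the quasilinear part, plus a perturbative remainder controlled by the H\"older norms $\CalA$ and $\CalB$. The principal part of the linearization should be, schematically,
\begin{equation*}
(\partial_t + T_{\ub}\partial_\alpha)\w + \text{(lower order)} = T_{1+\W}\, T_{\frac{1}{1+\bar\W}}\, \w_\alpha^{(R)}, \qquad (\partial_t + T_{\ub}\partial_\alpha)\hr + i T_{\ua}(\ldots) - i\sigma T_m \hr_{\alpha\alpha\alpha} + \ldots,
\end{equation*}
where the capillary term comes from paralinearizing $\frac{2\sigma}{1+\W}\nP\Im[\W_\alpha J^{-1/2}(1+\W)^{-1}]_\alpha$. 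The key structural point is that $\ua$ is nonnegative (this is why it is called the frequency shift, exactly as in the irrotational case), and that the transport coefficient $\ub$ is real, so $T_{\ub}\partial_\alpha$ is essentially skew-adjoint up to lower-order terms governed by $\ub_\alpha$, whose formula is given in \eqref{ubalpha}.

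Next I would introduce the diagonal variables by applying the appropriate elliptic weights. Set $(\w, \hr)$ to be suitable paradifferential renormalizations of $(\langle D\rangle^{s+1/2}\W, \langle D\rangle^{s}R)$ — more precisely one conjugates by powers of $1+\W$ and $J$ adapted so that the two equations decouple to leading order and the resulting system has a self-adjoint (indeed positive) energy functional of the form
\begin{equation*}
E_s(\w,\hr) = \int \sigma\, T_{\mu}\, |\w_\alpha|^2 + \Re(T_\nu\, \hr\, \bar{\hr}) \, d\alpha
\end{equation*}
with real, positive, smooth-enough symbols $\mu, \nu$ built from $\W, R, \ua$, mirroring the conserved energy $\mathcal{E}_0$ of \eqref{e:ZeroLinear}. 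I would check that $E_s \sim \|(\W,R)\|_{\mathcal{H}^s}^2$ with constants depending only on $\CalA$ (using that $J$ is bounded above and below once $\|\W\|_{C^{1+\epsilon}_*}$ is controlled). Then differentiating $E_s$ in time, the transport terms integrate by parts producing a factor $\|\ub_\alpha\|_{L^\infty}\lesssim \CalB$; the principal off-diagonal interaction between the $\W$-equation and the $R$-equation cancels by the choice of weights (this cancellation is the capillary analogue of the gravity-water-wave cancellation and is forced by the Hamiltonian structure); and all commutators $[\,T_{\text{symbol}}, \langle D\rangle^s\,]$, paraproduct remainders, and the difference between $f g$ and $T_f g + T_g f$ are lower order, hence estimated by $\CalA$ and $\CalB$ times $\|(\W,R)\|_{\mathcal{H}^s}^2$. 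One also needs the low-frequency/Sobolev-vs-homogeneous bookkeeping: since $\mathcal{H}^s$ is an inhomogeneous norm and the leading operators are homogeneous, the bottom frequencies are handled directly from the equation using $\CalA$-type bounds.

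The main obstacle I expect is twofold. First, because the dispersion is third order in $\partial_\alpha$ on the $R$-side (fourth order after eliminating $q$), the conjugation/diagonalization must be carried out to sufficiently many orders: a single paradifferential weight will not fully diagonalize, and one must track sub-principal symbols carefully so that the leftover terms land at order $\le s$ in $\mathcal{H}^s$ and involve at most $\CalB$ (one derivative above $\CalA$) and never two derivatives above $\CalA$ — this is precisely where the specific regularities in the definitions of $\CalA$ and $\CalB$ (e.g. $C^{3/2}_*$ for $\W$, $C^{1+\epsilon}_*$ for $R$, and the extra $\gamma$-weighted terms) are dictated. Second, one must verify positivity of the frequency-shift symbol $\ua$ and control its regularity: $a$ is quadratic in $R$ and its $\alpha$-derivatives, so a priori it costs too many derivatives; the resolution, as in the irrotational and gravity cases, is that $a$ (and $\ua$) enjoys extra cancellation and can be bounded in $C^\epsilon_*$ (or better) by $\CalA\CalB$-type quantities rather than by $\|R\|_{C^{1+\epsilon}_*}^2$ naively — establishing these balanced bounds for $\ua, \ub, \underline M$ in terms of $\CalA$ and $\CalB$ is the technical heart of the argument, and I would isolate them as preliminary lemmas before touching the energy.
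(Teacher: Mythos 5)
Your strategy --- paralinearize, symmetrize with paradifferential weights, run a weighted $L^2$ energy estimate and show the time derivative is controlled by $(1+\CalB)$ --- is the same as the paper's, but two points in your plan are off. First, you single out positivity of $\ua$ as ``the key structural point.'' In the capillary regime this plays no such role: since $\sigma>0$ the symbol $\ell(\xi) = (\sigma|\xi|^3 + g|\xi| + \gamma^2/4)^{1/2}$ of $L(D)$ is automatically positive (no Taylor sign condition is needed), the term $i\nP[(g+\ua)Y]$ is absorbed entirely into the perturbative source after extracting $ig\W$, and the positive symbol that actually drives the symmetrization is $c=J^{-3/4}$, coming from paralinearizing the curvature. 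Second, and more seriously, you omit the Wahl\'en change of variables $(\eta,\zeta)=(\W,\,R-i\tfrac{\gamma}{2}W)$, which the paper applies before symmetrizing. Without it the paradifferential system has unequal diagonal entries ($0$ on the $\W$-line, $i\gamma$ on the $R$-line) and the vorticity sits outside the off-diagonal operator; after the change both diagonals become $i\gamma/2$ and the off-diagonal symbol $k$ acquires the order-$(-1)$ piece $\gamma^2/(4\xi)$, which is exactly what combines with $\sigma\xi^2$ and $g$ to produce $\ell^2$ and allows the symbol equations $iT_pT_\lambda \sim L^{1/2}T_cL^{1/2}T_q$, $iT_qT_k \sim L^{1/2}T_cL^{1/2}T_p$ to be solved for $p,q,c$. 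The ``cancellation forced by the Hamiltonian structure'' that you invoke is precisely this symbol matching; with weights built only from powers of $1+\W$ and $J$, as you propose, it will not come out, because such weights cannot generate the $\gamma^2/(4\xi)$ piece.

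On the energy functional itself: rather than guessing a two-component quadratic form $E_s$, the paper reduces the symmetrized $2\times 2$ system to a single scalar paradifferential equation for $\Phi=\Re T_q\zeta + i\Im T_p\W$ (using the Hilbert-transform relations $\Re U=H\Im U$, $\Im V=-H\Re V$, available since $U,V$ are holomorphic), then conjugates by $T_\wp$ with $\wp=(c\ell)^{2s/3}$ and computes $\tfrac{d}{dt}\|T_\wp\Phi\|_{L^2}^2$. The point of the choice $\wp=(c\ell)^{2s/3}$ is that $\partial_\xi\wp\,\partial_\alpha(c\ell)=\partial_\alpha\wp\,\partial_\xi(c\ell)$, so the leading symbol of the commutator $[L^{1/2}T_cL^{1/2},T_\wp]$ vanishes --- this is what makes the estimate close at exactly $(1+\CalB)$ with no extra loss. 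Your proposed $E_s$ as written also has a scaling slip: with $\w$ a renormalization of $\langle D\rangle^{s+1/2}\W$, the quantity $\int T_\mu|\w_\alpha|^2$ controls $\|\W\|_{H^{s+3/2}}$, one derivative too many to be equivalent to $\|(\W,R)\|_{\mathcal{H}^s}$. These are not cosmetic details; they are the steps that make the commutator algebra actually close at the claimed regularity.
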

The proof of this energy estimate does not rely on the dispersive properties of the equations.
We remark that by using the energy estimate, the contraction estimate, and the Sobolev embedding for the control norms, one can show that in the periodic case, the water wave system is locally well-posed in $\mathcal{H}^s(\mathbb{T})$ for $s>\frac{3}{2}$.
Compared to the  result in \cite{MR4658635} (where the authors are using the Zakharov-Craig-Sulem formulation), this is a low regularity well-posedness result and holds for all choices of parameters $\sigma>0, \gamma\in \mathbb{R}$. 

Note that the Sobolev embedding from $H^s(\mathbb{R})$ to $C_*^{s-\frac{1}{2}}(\mathbb{R})$  exhibits a gap of $\frac{1}{2}$ derivative.
On the real line, by exploiting the dispersive properties of the water wave system, one may hope to lower this gap.
Indeed, by using the Strichartz estimate proved in Nguyen \cite{MR3724757}, one can lower the gap from $\frac{1}{2}$ to $\frac{1}{4}$.
Our second main result is the Strichartz estimate for \eqref{e:WR}.
\begin{theorem} \label{t:MainTwo}
Suppose $(\W, R)$ is a solution of \eqref{e:WR} on $I=[0,T]$ with
\begin{equation*}
(\W, R) \in C^0(I; \mathcal{H}^s(\mathbb{R}))\cap L^4(I; \mathcal{W}^r(\mathbb{R})), \quad s>r>1.
\end{equation*}
Then for $\mu<\frac{1}{4}$, $(\W, R) \in  L^4(I; \mathcal{W}^{s-\frac{1}{2}+\mu,\infty}(\mathbb{R}))$.   
\end{theorem}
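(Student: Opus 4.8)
The plan is to reduce the quasilinear system \eqref{e:WR} to a scalar dispersive paradifferential equation whose principal part matches the linearized operator $\partial_t + i\underline{b}\partial_\alpha + \text{(third-order capillary term)}$, and then quote the Strichartz estimate of Nguyen \cite{MR3724757} for this model, transferring it back to $(\W,R)$. First I would diagonalize \eqref{e:WR}: introduce a complex unknown $u$ that combines $\W$ (at regularity $s+\tfrac12$) and $R$ (at regularity $s$) after suitable fractional-derivative weights, so that $u$ solves a paralinearized equation of the schematic form $\partial_t u + T_{\underline b}\partial_\alpha u + i T_{\sigma^{1/2}|\xi|^{3/2} m(\W)} u = f$, where $m(\W)$ is a positive elliptic symbol built from the Jacobian $J=|1+\W|^2$ and $f$ collects lower-order and perturbative terms. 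The key point is that the symbol of the principal dispersive part has the same homogeneity ($3/2$ in $\xi$) and the same structure (real, elliptic, depending on $\W$ only through $C^1$-type quantities controlled by $\CalA$) as in the irrotational gravity-capillary case; the vorticity contributions $i\gamma R$ and the $\gamma$-dependent pieces of $\underline b,\underline M,\ua$ are all of order $\le 1$ in $\xi$ and enter only through $\CalA,\CalB$, so they are harmless perturbations for a Strichartz estimate that only sees the third-order operator.

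The steps, in order: (1) Paralinearize each term of \eqref{e:WR}: replace products by paraproducts plus smoother remainders, using the algebra/paraproduct estimates and the pointwise bounds on $\underline b,\ua,\underline M$ in terms of $\CalA,\CalB$ (these are exactly the control-norm bounds the paper sets up, inspired by \cite{ai2023dimensional}). (2) Symmetrize: find the change of unknown $u = \Lambda(\W,R)$, where $\Lambda$ is a (time-dependent, $\W$-dependent) order-$0$ elliptic paradifferential operator, so that the two equations decouple at leading order into one forward and one backward dispersive equation — only one of which, say the one for $u$, needs to be treated, the other following by conjugation. (3) Regularize the transport term: absorb $T_{\underline b}\partial_\alpha$ by composing with a flow/change of variables, or equivalently note that $T_{\underline b}\partial_\alpha$ is antisymmetric modulo order-$0$ and hence only contributes to the energy, not the dispersion. (4) Apply the semiclassical/Littlewood–Paley decomposition: frequency-localize $u$ at dyadic scale $\lambda$, establish the dispersive (local smoothing / Strichartz) estimate for the frequency-localized flow of $\partial_t + i T_{c(\alpha)\lambda^{3/2}}$ with $c$ a $C^\epsilon$-in-$x$ elliptic coefficient — this is precisely where Nguyen's estimate (a $WKB$/wave-packet parametrix construction valid under $L^4_t W^{r,\infty}_x$ control of the coefficients with $r>1$) is invoked as a black box. (5) Sum over dyadic blocks and over the bounded-length subintervals on which the coefficients have small oscillation, using the $L^4(I;\mathcal W^r)$ hypothesis to control the coefficient norms; this yields $u \in L^4(I; W^{s-\frac12+\mu,\infty})$ for $\mu<\tfrac14$, hence the claimed bound on $(\W,R)$ after undoing $\Lambda$.

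The main obstacle I expect is step (2)–(4): verifying that the paradifferential reduction of \eqref{e:WR} really produces an operator to which Nguyen's Strichartz estimate applies verbatim, i.e. that after symmetrization the principal symbol is scalar, real, elliptic of order $3/2$ with $C^\epsilon$ spatial regularity and the remainder is genuinely of lower order with coefficients bounded by $1+\CalB$ in the relevant norms. The vorticity terms make this bookkeeping heavier than in the irrotational case — in particular one must check that $i\gamma R$ and the $\gamma$-pieces of $\ua$ (the frequency shift) do not perturb the principal symbol but only shift it by order-$\le 1$ symbols, so that $\gamma$ "counts as half a derivative" exactly as the paper advertises and never interacts with the order-$3/2$ part. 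A secondary technical point is that the $\mathcal H^s$ hypothesis with $s>r>1$ is needed to make sense of the paralinearization remainders and to close the transfer of the estimate from $u$ back to the original unknowns; one should be careful that no regularity is lost in the conjugation by $\Lambda$, which requires $\Lambda$ and $\Lambda^{-1}$ to be bounded on the relevant Hölder–Zygmund spaces with operator norms controlled by $\CalA$.
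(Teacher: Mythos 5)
Your overall architecture---paralinearize, symmetrize, reduce to a scalar paradifferential dispersive equation of order $\tfrac32$, and then invoke Nguyen's Strichartz estimate---is the same route the paper takes. But your step (3) misidentifies which part of the operator needs to be straightened, and the ``or equivalently'' remark there---that $T_{\ub}\partial_\alpha$ is antisymmetric and so ``only contributes to the energy, not the dispersion''---is simply false for a Strichartz estimate: a transport term with a rough coefficient genuinely shifts wave packets and cannot be dropped. Nguyen's result, quoted in the paper as Proposition \ref{t:StrichartzGeneral}, does accommodate a transport term $T_V\partial_x$ under $V\in L^\infty_tL^\infty_x\cap L^4_tW^{1,\infty}_x$, but it requires the dispersive part to be the \emph{constant-coefficient} operator $|D|^{3/2}$. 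The variable-coefficient symbol $c(\alpha)|\xi|^{3/2}$ that you arrive at after symmetrization is exactly the object that cannot be left in place. What the paper actually does is the opposite of your step (3): it constructs a time-dependent diffeomorphism $\kappa=\chi^{-1}$ with $\partial_\alpha\chi = c^{-2/3} = J^{1/2}$, applies Alinhac's global para-composition $\kappa^*$ (Theorem \ref{t:ParaComposition}, taken from \cite{MR3724757}), and verifies in Proposition \ref{t:ConstantDispersive} that the principal dispersive symbol transforms to $|\xi|^{3/2}$. The transport term is left alone and merely acquires a new coefficient $\tilde{\ub}=(\ub\circ\kappa)(\partial_\alpha\chi\circ\kappa)+\partial_t\chi\circ\kappa$, which one then checks has the $L^\infty$ and $L^4W^{1,\infty}$ bounds that Nguyen's black box demands. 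The nontrivial labor---bounding the conjugation and linearization remainders $R_{\mathrm{conj}}, R_{\mathrm{line}}$ of the para-composition, and controlling $\partial_t\kappa$ because the diffeomorphism is time-dependent---is exactly the content of your missing step.

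A secondary omission: the paper does not symmetrize $(\W,R)$ directly. It first passes to the Wahl\'en variables $(\eta,\zeta)=(\W, R-i\tfrac{\gamma}{2}W)$, which make both diagonal entries of the matrix equal to $i\gamma/2$ and thereby render the system symmetrizable by scalar paradifferential weights $T_p, T_q$; this is the specific device that implements your intuition that ``$\gamma$ counts as half a derivative'' and enters only as an order-$\le 1$ perturbation. Without this change of unknowns the vorticity terms do not decouple cleanly at the symbol level, and the scalar reduction $\Phi=\Re T_q\zeta + i\Im T_p\eta$ that feeds into the Strichartz step is not available.
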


Using the energy estimate and the Strichartz estimate as the main ingredients, we can prove the local well-posedness for the water wave system \eqref{e:WR} on the real line.
\begin{theorem} \label{t:MainWellPosed}
Let $s>\frac{5}{4}$, the water wave system \eqref{e:WR} is locally well-posed in $\mathcal{H}^s(\mathbb{R})$.
\end{theorem}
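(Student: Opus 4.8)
The plan is to follow the standard scheme for quasilinear dispersive evolutions, using Theorem~\ref{t:EnergyEstimate} and Theorem~\ref{t:MainTwo} as the two a priori inputs and a continuity argument that closes exactly at the threshold $s>\frac54$: construct smooth solutions, propagate uniform $\mathcal{H}^s$ and Strichartz bounds on a data-dependent lifespan, pass to the limit via a weak-stability estimate, and upgrade to genuine well-posedness (time continuity and continuous dependence) by a Bona--Smith/frequency-envelope argument. Fix $s>\frac54$ and choose $\epsilon>0$ small and $\mu<\frac14$ close to $\frac14$ so that, setting $r:=1+\epsilon$, one has $1<r<s$, $r\le s-\frac12+\mu$ and $s+\mu>\frac32$; such a choice exists precisely because $s>\frac32-\frac14=\frac54$. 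The point of these exponents is that $\CalB$ is then dominated by $\|(\W,R)\|_{\mathcal{W}^{r}}$, hence by $\|(\W,R)\|_{\mathcal{W}^{s-1/2+\mu}}$, which is the level produced by Theorem~\ref{t:MainTwo}, while $\CalA$ is controlled by $\|(\W,R)\|_{\mathcal{H}^s}$ through Sobolev embedding.

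First I would produce, for smooth data $(\W_0,R_0)$, smooth local solutions of \eqref{e:WR} by a standard regularization of the system (frequency truncation or parabolic regularization) together with \eqref{EstimateWREnergy}, which closes for smooth solutions since then $\CalB<\infty$ by Sobolev embedding. To obtain a lifespan and uniform bounds depending only on $N_0:=\|(\W,R)(0)\|_{\mathcal{H}^s}$, I would run a continuity argument in the two quantities $\|(\W,R)\|_{C^0([0,T];\mathcal{H}^s)}$ and $\|(\W,R)\|_{L^4([0,T];\mathcal{W}^{r})}$: Sobolev embedding bounds $\CalA$ pointwise by $C(N_0)$; the quantitative form of Theorem~\ref{t:MainTwo} upgrades the $L^4_t\mathcal{W}^r$ information to an $L^4_t\mathcal{W}^{s-1/2+\mu}$ bound, whence $\int_0^T(1+\CalB)\,dt\lesssim T+T^{3/4}\|(\W,R)\|_{L^4_t\mathcal{W}^{s-1/2+\mu}}$, which is small for $T$ small; Gr\"onwall in \eqref{EstimateWREnergy} then improves the $\mathcal{H}^s$ bound, while a short-interval version of Theorem~\ref{t:MainTwo}, summed over $[0,T]$, improves the Strichartz bound, so that both bounds improve for $T$ small depending only on $N_0$. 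This yields a uniform time of existence $T>0$ and uniform bounds $\|(\W,R)\|_{C^0([0,T];\mathcal{H}^s)}+\|(\W,R)\|_{L^4([0,T];\mathcal{W}^{s-1/2+\mu})}\le C(N_0)$ for the regularized solutions.

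The remaining ingredient is a weak-stability estimate: for two solutions with data of size $\le N_0$ in $\mathcal{H}^s$, the difference $(\W_1-\W_2,R_1-R_2)$ satisfies an estimate of the form $\|(\W_1-\W_2,R_1-R_2)(t)\|_{\mathcal{H}^{\sigma}}\lesssim\|(\W_1-\W_2,R_1-R_2)(0)\|_{\mathcal{H}^{\sigma}}\exp\!\big(\int_0^t C_\CalA(1+\CalB)\,ds\big)$ for a suitable $\sigma<s$, where $\CalA,\CalB$ denote the (finite, by the previous step) control norms of the two solutions; morally this is \eqref{EstimateWREnergy} applied to the linearized system, and it necessarily loses derivatives, which is why it is run below regularity $s$. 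This gives uniqueness at once and shows the regularized solutions are Cauchy in $C^0([0,T];\mathcal{H}^{\sigma})$; interpolating against the uniform $\mathcal{H}^s$ bound gives convergence in $C^0([0,T];\mathcal{H}^{s'})$ for every $s'<s$, enough to pass to the limit in \eqref{e:WR} and obtain a solution $(\W,R)\in L^\infty([0,T];\mathcal{H}^s)\cap L^4([0,T];\mathcal{W}^{s-1/2+\mu})$. Finally, to upgrade $L^\infty_t\mathcal{H}^s$ to $C^0_t\mathcal{H}^s$ and prove continuous dependence of the solution map, I would use a Bona--Smith argument: regularize the datum at scale $\delta$ so that its high-frequency tail is governed by a frequency envelope of $(\W,R)(0)$, apply \eqref{EstimateWREnergy} at level $s$ and at higher levels together with the difference estimate at level $\sigma$, and conclude that the approximating solutions converge to $(\W,R)$ in $C^0_t\mathcal{H}^s$ and that nearby data give nearby solutions; equivalently one may upgrade \eqref{EstimateWREnergy} to a frequency-envelope estimate in the style of Ifrim--Tataru and Ai--Ifrim--Tataru and read off time continuity and continuous dependence directly.

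The principal obstacle is the non-circular closure of the energy and Strichartz estimates at the threshold. One has only the (just under) $\frac14$-derivative Strichartz gain, so $\CalB$ can be placed only in $L^4_t$, not $L^\infty_t$; one must check both that the exponents $\epsilon,\mu,r$ fit together --- this is exactly the arithmetic $s>\frac54$ --- and that the application of Theorem~\ref{t:MainTwo} is organized (via short subintervals whose length is controlled by $N_0$ alone, then summed, and via smallness in $T$) so that the lower-order Strichartz norm on its right-hand side does not feed back circularly into the bootstrap. The second delicate point is the low-regularity weak-stability estimate together with the Bona--Smith/frequency-envelope upgrade: since \eqref{e:WR} is quasilinear, the difference estimate must be run in a space $\mathcal{H}^{\sigma}$ with $\sigma$ low enough that the derivative loss is affordable yet high enough to control the nonlinear coefficients through the $\mathcal{H}^s$ and Strichartz norms of the two solutions, and turning this into strong continuity in $\mathcal{H}^s$ and Hadamard continuous dependence, rather than mere weak-$*$ regularity, is the usual technical endpoint.
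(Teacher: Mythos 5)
Your scheme is the one the paper follows in Section~\ref{s:Cauchy}: frequency-truncated regularization of the data, a bootstrap combining Theorem~\ref{t:EnergyEstimate} and Theorem~\ref{t:MainTwo} to get a uniform lifespan and uniform $C^0_t\mathcal{H}^s\cap L^4_t\mathcal{W}^r$ bounds with $1<r<s-\frac14$, a contraction estimate at lower regularity, and a frequency-envelope argument for $C^0_t\mathcal{H}^s$ continuity and Hadamard continuous dependence. The arithmetic you give for the exponents ($r=1+\epsilon$, $\mu<\frac14$, $s-\frac12+\mu>r$) is exactly the threshold $s>\frac54$, and the bootstrap organization, with $\mathcal{A}$ from Sobolev embedding and $\int(1+\mathcal{B})$ from the Strichartz bound, is what the paper does.

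There is one point where your sketch as written would not close. You state the weak-stability estimate purely in $\mathcal{H}^\sigma$, of the form
\begin{equation*}
\|(\delta\W,\delta R)(t)\|_{\mathcal{H}^{\sigma}}\lesssim\|(\delta\W_0,\delta R_0)\|_{\mathcal{H}^{\sigma}}\exp\Big(\int_0^t C_{\mathcal{A}}(1+\mathcal{B})\,ds\Big),
\end{equation*}
where $\mathcal{A},\mathcal{B}$ are control norms of the two base solutions. In the regime $\frac54<s\le\frac32$ this is not enough: the source terms $(G^\sharp,K^\sharp)$ of the difference system contain a contribution of the schematic form $\|(\W_j,R_j)\|_{\mathcal{H}^s}\|(\delta\W,\delta R)\|_{\mathcal{W}^{r-3/2}}$, in which the \emph{H\"older} norm of the difference appears, and when $s\le\frac32$ this H\"older norm cannot be bounded by $\|(\delta\W,\delta R)\|_{\mathcal{H}^{s-3/2}}$ via Sobolev embedding. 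The paper (Proposition~\ref{t:Contraction}, together with \eqref{EnergyDiff} and \eqref{StrichartzDiff}) therefore runs a \emph{joint} energy--Strichartz contraction estimate for the difference, controlling $\|(\delta\W,\delta R)\|_{L^\infty_t\mathcal{H}^{s-3/2}}+\|(\delta\W,\delta R)\|_{L^4_t\mathcal{W}^{r-3/2}}$ simultaneously, with the Strichartz gain on the difference used to absorb the H\"older norm. So the missing ingredient in your plan is that Theorem~\ref{t:MainTwo} must also be invoked for the \emph{difference} system at the lowered regularity, not only for the two solutions themselves; once that is added, the rest of your outline matches the paper's.
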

Here by local well-posedness in $\mathcal{H}^s(\mathbb{R})$, we need to prove
\begin{enumerate}
\item Existence of the solution: If the initial data $(\W_0, R_0)\in \mathcal{H}^s(\mathbb{R})$, then there exists a time $T>0$ such that \eqref{e:WR} has a  solution $(\W,R)\in C^0([0,T]; \mathcal{H}^s(\mathbb{R}))$. 
\item Continuous dependence of initial data: Consider a sequence of  data $\{(\W_{n,0}, R_{n,0} )\}_{n=0}^\infty$ that converges to $(\W_0, R_0)$ in $\mathcal{H}^s(\mathbb{R})$.
Then for the corresponding solutions $\{ (\W_n, R_n)\}_{n=0}^\infty$ and  $(\W,R)$ defined on time $[0,T]$,
\begin{equation*}
\lim_{n\rightarrow \infty}\|(\W_n, R_n) -(\W, R) \|_{C^0([0,T]; \mathcal{H}^s)} = 0.
\end{equation*}
\item Uniqueness: The solution is unique for $s>\frac{3}{2}$. 
When $\frac{5}{4}<s\leq \frac{3}{2}$, we don't have direct uniqueness of the solution. 
The solution is unique in the sense that it is the unique limit of the regular solution.
\end{enumerate}

The rest of the article is organized as follows.
In Section \ref{s:Norms}, we recall some estimates in paradifferential calculus and paraproducts.
We then use these estimates to compute bounds for the auxiliary functions.
In Section \ref{s:reduction}, we first paralinearize the water wave system and rewrite the system using Wahl\'{e}n variables. 
Then we symmetrize the system and write the water wave system as a nonlinear paradifferential dispersive equation of order $\frac{3}{2}$.
Next, in Section \ref{s:Estimate}, we prove Theorem \ref{t:EnergyEstimate} and Theorem \ref{t:MainTwo}. We first derive an a priori energy estimate for the water wave system \eqref{e:WR}, then we use the technique of global para-composition to reexpress the water wave system as a paradifferential equation of \eqref{ConstantDispersive} type and derive the Strichartz estimate.
Finally, in  Section \ref{s:Cauchy}, we combine all the results in previous sections and sketch the proof of the local well-posedness of the water wave system.

\section{Paradifferential estimates and water waves related bounds} \label{s:Norms}
In this section, we first list  definitions of norms we will use in this article.
Our analysis relies heavily on the  paradifferential calculus.
In the second part of this section, we recall some paradifferential estimates. 
Many of these  estimates are relatively standard.
They can be found in for instance \cite{MR2931520, MR3260858, MR3585049, MR4072685} or the textbooks \cite{MR2768550, MR2418072}.
In the third part, we compute some water waves related bounds for auxiliary functions.

\subsection{Norms and function spaces}
To begin with, we recall the  Littlewood-Paley frequency decomposition,
\begin{equation*}
    I = \sum_{k\in \mathbb{N}} P_k, 
\end{equation*}
where for each $k\geq 1$, $P_k$ is the smooth frequency projection  that selects  frequency $2^k$, and $P_0$ selects the low frequency components $|\xi|\leq 1$.
\begin{enumerate}
\item Let $s\in \mathbb{R}$, and $p,q \in [1, \infty]$, the non-homogeneous Besov space $B^s_{p,q}(\mathbb{R})$ is  defined as the space of all  tempered distributions $u$ satisfying
\begin{equation*}
\| u\|_{B^s_{p,q}} : = \left\|(2^{ks}\|P_k u \|_{L^p})_{k=0}^\infty \right\|_{l^q} < +\infty.
\end{equation*}
\item When $p = q = \infty$, the Besov space $B^s_{\infty, \infty}$ becomes the \textit{Zygmund space} $C^s_{*}$.
When $p = q =2$, the Besov space $B^s_{2,2}$ becomes the \textit{Sobolev space} $H^s$.
\item Let $1\leq p_1 \leq p_2 \leq \infty$, $1\leq r_1 \leq r_2 \leq \infty$, then for any real number $s$,
\begin{equation*}
    B^s_{p_1, r_1}(\mathbb{R}) \hookrightarrow B^{s-(\frac{1}{p_1} - \frac{1}{p_2})}_{p_2, r_2}(\mathbb{R}).
\end{equation*}
As a special case when $p_1 = r_1 =2$ and $p_2 = r_2 = \infty$, 
\begin{equation}
 H^{s+\frac{1}{2}}(\mathbb{R}) \hookrightarrow C^s_{*}(\mathbb{R}) \quad \forall s, \label{HsCsEmbed}
\end{equation}
the Sobolev space $H^{s+\frac{1}{2}}(\mathbb{R})$ can be embedded into the Zygmund space $C^s_{*}(\mathbb{R})$.
\item Let $k\in \mathbb{N}$, $W^{k,\infty}(\mathbb{R})$ denotes the space of all functions such that $\partial_x^j u \in L^\infty(\mathbb{R})$, $0\leq j \leq k$. 
For $\rho = k+ \sigma$ with $k\in \mathbb{N}$ and $\sigma \in (0,1)$,  $W^{\rho, \infty}(\mathbb{R})$  denotes the space of all function $u\in W^{k,\infty}(\mathbb{R})$ such that  $\partial_x^k u$ is $\sigma$- H\"{o}lder continuous on $\mathbb{R}$. 
\item The Zygmund space $C^s_{*}(\mathbb{R})$ coincides with the H\"{o}lder space $W^{s, \infty}(\mathbb{R})$ when $s\in (0,\infty)\backslash \mathbb{N}$.
One has the embedding properties
\begin{align*}
  &C_{*}^s(\mathbb{R}) \hookrightarrow L^\infty(\mathbb{R}), \quad s>0; \qquad L^\infty(\mathbb{R}) \hookrightarrow C_{*}^s, \quad s<0;\\
  &C_{*}^{s_1}(\mathbb{R})\hookrightarrow C_{*}^{s_2}(\mathbb{R}), \quad H^{s_1}(\mathbb{R})\hookrightarrow H^{s_2}(\mathbb{R}), \qquad s_1>s_2.
\end{align*}
\end{enumerate}

\subsection{Paradifferential estimates}\label{ss:para-estimates}
\begin{definition}
\begin{enumerate}
\item Let $\rho\in [0,\infty)$, $m\in \mathbb{R}$. 
$\Gamma^m_\rho(\mathbb{R})$ denotes the space of locally bounded functions $a(x, \xi)$ on $\mathbb{R}\times (\mathbb{R}\backslash \{0\})$, which are $C^\infty$ with respect to $\xi$ for $\xi \neq 0$ and such that for all $k \in \mathbb{N}$ and $\xi \neq 0$, the function $x\mapsto \partial_\xi^k a(x,\xi)$ belongs to $W^{\rho,\infty}(\mathbb{R})$ and there exists a constant $C_k$ with
\begin{equation*}
\forall |\xi|\geq \frac{1}{2}, \quad \|\partial_\xi^k a(\cdot,\xi) \|_{W^{\rho,\infty}} \leq C_k (1+ |\xi|)^{m-k}.
\end{equation*}
Let $a\in \Gamma^m_\rho$,  we define the semi-norm
\begin{equation*}
M^m_{\rho}(a) = \sup_{k \leq \frac{3}{2}+\rho} \sup_{|\xi|\geq \frac{1}{2}} \|(1+ |\xi|)^{k-m}\partial_\xi^k a(\cdot,\xi)  \|_{W^{\rho,\infty}}.
\end{equation*}
\item Given $a\in \Gamma^m_\rho(\mathbb{R})$, let $C^\infty$ functions $\chi(\theta, \eta)$ and $\psi(\eta)$ be such that for some $0<\epsilon_1 < \epsilon_2<1$,
\begin{align*}
    &\chi(\theta, \eta) = 1,  \text{ if } |\theta| \leq \epsilon_1(1+ |\eta|), \qquad \chi(\theta, \eta) = 0,  \text{ if } |\theta| \geq \epsilon_2(1+ |\eta|),\\
    &\psi(\eta) = 0, \text{ if } |\eta|\leq \frac{1}{5}, \qquad \psi(\eta) =1, \text{ if } |\eta|\geq \frac{1}{4}.
\end{align*}
We define the paradifferential operator $T_a$ by
\begin{align*}
    \widehat{T_a u}(\xi) = \frac{1}{2\pi}\int \chi(\xi -\eta, \eta) \hat{a}(\xi-\eta, \eta)\psi(\eta)\hat{u}(\eta) d\eta,
\end{align*}
where $\hat{a}(\theta, \xi)$ is the Fourier transform of a with respect to the  variable x.
\item Let $m\in \mathbb{R}$, an operator  is said to be of order $m$ if, for all $s\in \mathbb{R}$, it is bounded from $H^s$ to $H^{s-m}$.
\item  Let $\rho\in (-\infty, 0)$, $m\in \mathbb{R}$. 
$\Gamma^m_\rho(\mathbb{R})$ denotes the space of distributions $a(x, \xi)$ on $\mathbb{R}\times (\mathbb{R}\backslash \{0\})$, which are $C^\infty$ with respect to $\xi$ for $\xi \neq 0$ and such that for all $k \in \mathbb{N}$ and $\xi \neq 0$, the function $x\mapsto \partial_\xi^k a(x,\xi)$ belongs to $C^{\rho}_* (\mathbb{R})$ and there exists a constant $C_k$ with
\begin{equation*}
\forall |\xi|\geq \frac{1}{2}, \quad \|\partial_\xi^k a(\cdot,\xi) \|_{C^{\rho}_*} \leq C_k (1+ |\xi|)^{m-k}.
\end{equation*}
Let $a\in \Gamma^m_\rho$,  we define the semi-norm
\begin{equation*}
M^m_{\rho}(a) = \sup_{k \leq \frac{3}{2}+|\rho|} \sup_{|\xi|\geq \frac{1}{2}} \|(1+ |\xi|)^{k-m}\partial_\xi^k a(\cdot,\xi)  \|_{C^{\rho}_*}.
\end{equation*}
\end{enumerate}
\end{definition}

We recall the basic symbolic calculus for paradifferential operators in the following result, these can be found in \cite{MR2418072}.
\begin{lemma}[Symbolic calculus, \cite{MR2418072}]
Let $m\in \mathbb{R}$ and $\rho\in [0, +\infty)$.
\begin{enumerate}
\item If $a\in \Gamma^m_0$, then the paradifferential operator $T_a$ is of order m. Moreover, for all $s\in \mathbb{R}$, there exists a positive constant $K$ such that
\begin{equation}
\|T_a\|_{H^s\rightarrow H^{s-m}} \leq K M^m_0(a). \label{TABound}
\end{equation}
\item If $a\in \Gamma^m_\rho$, and $b\in \Gamma^{m^{'}}_\rho$ with $\rho>0$, then the operator $T_a T_b -T_{a\sharp b}$ is of order $m+m^{'}-\rho$, where the composition
\begin{equation*}
    a \sharp b := \sum_{\alpha<\rho} \frac{(-i)^\alpha}{\alpha !} \partial^\alpha_\xi a(x,\xi) \partial^\alpha_x b(x,\xi).
\end{equation*}
Moreover,  for all $s\in \mathbb{R}$, there exists a positive constant $K$ such that
\begin{equation}
  \|T_a T_b-T_{a\sharp b} \|_{H^s \rightarrow H^{s-m-m^{'}+\rho}} \leq K\left(M^m_\rho(a)M^{m^{'}}_0(b) + M^m_0(a)M^{m^{'}}_\rho(b)\right). \label{CompositionPara}
\end{equation}
\item Let $a\in \Gamma^m_\rho$ with $\rho > 0$. 
Denote by $(T_a)^{*}$ the adjoint operator of $T_a$ and by $\bar{a}$ the complex conjugate of $a$.
Then $(T_a)^{*} -T_{a^{*}}$ is of order $m - \rho$, where 
\begin{equation*}
    a^{*} =  \sum_{\alpha<\rho} \frac{1}{i^\alpha \alpha !} \partial^\alpha_\xi \partial^\alpha_x \bar{a}.
\end{equation*}
Moreover,  for all $s\in \mathbb{R}$, there exists a positive constant $K$ such that
\begin{equation}
 \|(T_a)^{*} -T_{a^{*}} \|_{H^s \rightarrow H^{s-m+\rho}} \leq KM^m_\rho(a). \label{AdjointBound}
\end{equation}
\end{enumerate}
\end{lemma}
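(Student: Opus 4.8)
The plan is to deduce all three assertions from $L^2$ bounds on Littlewood--Paley blocks, exploiting the \emph{spectral localization} built into the quantization: the cutoff $\chi(\xi-\eta,\eta)$ forces the $x$-frequencies of the symbol to be at most a fixed fraction (governed by $\epsilon_2$) of the output frequency, so $T_aP_k$ maps into frequencies $|\xi|\sim 2^k$ and $T_a$ is ``almost diagonal'' in frequency; the same is then true of $T_aT_b$ and $(T_a)^*$. Throughout I use the dyadic characterizations $\|u\|_{H^s}^2\sim\sum_k 2^{2ks}\|P_ku\|_{L^2}^2$ and $\|f\|_{C^\rho_*}\sim\sup_k 2^{k\rho}\|P_kf\|_{L^\infty}$, and the analogue for $W^{\rho,\infty}$ when $\rho\notin\mathbb{N}$.

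\textbf{Part (1).} Using the cutoff one writes $T_au=\sum_k\widetilde P_k\big[a_k(x,D)\,P_ku\big]$, where $\widetilde P_k$ is a fattened projector at frequency $2^k$ and $a_k(x,\xi)$ is $a(x,\xi)$ with its $x$-frequencies truncated below $\sim 2^k$; in particular $a_k$ is band-limited in $x$, with $\sup_x\|a_k(\cdot,\xi)\|_{L^\infty}\lesssim\|a(\cdot,\xi)\|_{L^\infty}\lesssim M^m_0(a)\,2^{km}$ on $|\xi|\sim 2^k$. A standard $L^2$ bound for pseudodifferential operators whose symbols are band-limited in $x$ (the truncation rendering all the $\partial_x$-corrections inoffensive) gives $\|a_k(x,D)P_ku\|_{L^2}\lesssim M^m_0(a)\,2^{km}\|P_ku\|_{L^2}$; summing over $k$ with the essential orthogonality of the blocks $\widetilde P_k[\,\cdot\,]$ yields \eqref{TABound}.

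\textbf{Part (2).} On the Fourier side $\widehat{T_aT_bu}(\xi)$ is a double integral in an intermediate frequency $\zeta$ with integrand $\chi(\xi-\zeta,\zeta)\hat a(\xi-\zeta,\zeta)\psi(\zeta)\,\chi(\zeta-\eta,\eta)\hat b(\zeta-\eta,\eta)\psi(\eta)\hat u(\eta)$; on its support $\xi\sim\zeta\sim\eta$. Taylor-expanding the factor $\chi(\xi-\zeta,\zeta)\hat a(\xi-\zeta,\zeta)\psi(\zeta)$ in $\zeta$ about $\zeta=\eta$ to order $\lfloor\rho\rfloor$ and carrying out the $\zeta$-integration in the polynomial terms reproduces exactly $a\sharp b=\sum_{|\alpha|<\rho}\frac{(-i)^\alpha}{\alpha!}\partial_\xi^\alpha a\,\partial_x^\alpha b$ (powers of $\zeta-\eta$ becoming $x$-derivatives of $b$, the $\partial_\zeta$'s becoming $\xi$-derivatives of $a$), matching $T_{a\sharp b}$ up to harmless cutoff-mismatch terms that are smoothing because differences of cutoffs live in transition regions. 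The Taylor remainder yields a symbol of the schematic form $\partial_\xi^{\lfloor\rho\rfloor+1}a$ paired with $\lfloor\rho\rfloor+1$ $x$-frequency factors of $b$ of size $\lesssim 2^k$ on the block at frequency $2^k$; since $b$ has only $C^\rho_*$ (resp.\ $W^{\rho,\infty}$) regularity this costs precisely $2^{-k\rho}$, i.e.\ $T_aT_b-T_{a\sharp b}$ is of order $m+m'-\rho$, and estimating as in Part (1)---splitting according to which symbol's $\rho$-regularity is being spent---gives \eqref{CompositionPara}.

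\textbf{Part (3) and the main obstacle.} The kernel of $(T_a)^*$ is that of $T_a$ with $x,y$ swapped and conjugated; writing it as an oscillatory integral and Taylor-expanding the amplitude about the diagonal $x=y$ to order $\lfloor\rho\rfloor$ produces $a^*=\sum_{|\alpha|<\rho}\frac{1}{i^\alpha\alpha!}\partial_\xi^\alpha\partial_x^\alpha\bar a$, the remainder again being of order $m-\rho$ by the same localization-versus-regularity trade-off, which gives \eqref{AdjointBound}. I expect Part (2) to be the genuine technical obstacle: one must organize the double frequency integral so that the Taylor expansion in $\zeta$ is legitimate---this is exactly where the strict separation $0<\epsilon_1<\epsilon_2<1$ of the cutoffs is used to keep the output, intermediate, and input frequencies comparable and to absorb the cutoff-mismatch---and one must propagate the \emph{non-integer} part of $\rho$ through the Zygmund/H\"older norms rather than through classical derivatives, which calls for the Littlewood--Paley description of $C^\rho_*$ together with a fractional form of the Taylor-remainder estimate. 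Parts (1) and (3) are then routine variants of the same machinery.
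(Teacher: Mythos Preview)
The paper does not prove this lemma; it is stated without proof as a citation from \cite{MR2418072} (M\'etivier's lecture notes on paradifferential calculus). Your proof sketch follows the standard route taken in that reference and in related treatments (e.g.\ Bony, Meyer, H\"ormander): frequency-localized $L^2$ bounds for Part~(1), a Taylor expansion of the amplitude in the intermediate frequency variable for the composition in Part~(2), and the analogous expansion of the kernel about the diagonal for the adjoint in Part~(3), with the non-integer part of $\rho$ handled via the Littlewood--Paley characterization of $C^\rho_*$. This is correct and is essentially the argument in the cited source, so there is nothing to compare against within the present paper.
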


When $\rho<0$, we recall Proposition $2.12$ of Alazard-Burq-Zuily \cite{MR3260858}.
\begin{lemma} [\hspace{1sp}\cite{MR3260858}] \label{t:NegativeRhoCal}
Let $\rho<0$, $m\in \mathbb{R}$ and $a\in \Gamma^m_\rho$. 
Then the operator $T_a$ is of order $m-\rho$, and satisfies
\begin{equation*}
    \|T_a\|_{H^s\rightarrow H^{s-(m-\rho)}} \leq C M^m_\rho(a), \quad \|T_a\|_{C^s_* \rightarrow C^{s-(m-\rho)}_{*}} \leq   C M^m_\rho(a).
\end{equation*} 
\end{lemma}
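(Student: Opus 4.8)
The plan is to carry out a direct Littlewood--Paley decomposition of the symbol $a$ in the spatial variable, reducing $T_a$ to a dyadic sum of paraproduct-type operators with elementary bounds, and then summing the series. The underlying mechanism is that a symbol in $\Gamma^m_\rho$ with $\rho<0$ is only polynomially large at high spatial frequencies: if $f\in C^\rho_*$ then $\|P_j f\|_{L^\infty}\lesssim 2^{-\rho j}\|f\|_{C^\rho_*}$, so replacing $a(\cdot,\xi)$ by its spatial truncation at frequency $2^j$ costs only a factor $2^{-\rho j}=2^{|\rho|j}$. Since in $T_a$ the spatial frequency $2^j$ of the symbol is always much smaller than the output frequency $2^k$, summing the resulting geometric series in $j$ --- of ratio $2^{|\rho|}>1$, hence dominated by the top term $j\sim k$ --- produces exactly the loss of $|\rho|$ derivatives asserted. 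The $H^s$ and the $C^s_*$ bound come out of the very same computation, with $\ell^2,L^2$ replaced by $\ell^\infty,L^\infty$.

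Concretely, write $a=\sum_{j\ge0}a_j$ with $a_j=P^x_j a$ the $j$-th Littlewood--Paley piece of $a(\cdot,\xi)$ in the $x$-variable (and $a_0=P^x_{\le0}a$), so that $T_a=\sum_j T_{a_j}$. Applying the Bernstein-type bound above to $\partial_\xi^l a(\cdot,\xi)$ and using $a\in\Gamma^m_\rho$, each $a_j$ lies in $\Gamma^m_0$ with $M^m_0(a_j)\lesssim 2^{-\rho j}M^m_\rho(a)$ and has spatial Fourier support in $\{|\theta|\sim2^j\}$ (in $\{|\theta|\lesssim1\}$ for $j=0$). From the support properties of the cutoffs --- $\psi(\eta)=0$ for $|\eta|\le\frac15$ and $\chi(\xi-\eta,\eta)=0$ unless $|\xi-\eta|\le\epsilon_2(1+|\eta|)$ with $\epsilon_2<1$ --- it follows that $T_{a_j}$ only sees input frequencies $|\eta|\gtrsim2^j$, and that for a dyadic block $P_k u$ with $2^k\gtrsim2^j$ the output $T_{a_j}(P_k u)$ is frequency-localized in a fixed dyadic annulus $\{|\xi|\sim2^k\}$, the lower bound on $|\xi|$ coming precisely from $\epsilon_2<1$. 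Hence one can write $T_a u=\sum_k\Pi_k$ with $\Pi_k:=\sum_{j:\,2^j\lesssim2^k}T_{a_j}(P_k u)$ frequency-supported at $\sim2^k$, the finitely many indices with $2^k=O(1)$ being grouped together.

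For each building block one has, for every $p\in[1,\infty]$,
\[
\|T_{a_j}(P_k u)\|_{L^p}\ \lesssim\ 2^{mk}\,M^m_0(a_j)\,\|P_k u\|_{L^p}\ \lesssim\ 2^{mk}\,2^{-\rho j}\,M^m_\rho(a)\,\|P_k u\|_{L^p}\qquad(2^j\lesssim2^k).
\]
For $p=2$ this is \eqref{TABound} together with $\|P_k u\|_{H^m}\sim2^{mk}\|P_k u\|_{L^2}$; for general $p$ --- in particular $p=\infty$ --- it is the classical fact that an operator of order $m$ with symbol in $\Gamma^m_0$ is also bounded $C^\sigma_*\to C^{\sigma-m}_*$, with constant $\lesssim M^m_0$, proved exactly as \eqref{TABound}. (Here $T_{a_j}$ is in fact a genuine paraproduct, since the spatial Fourier support of $a_j$ is a ball of radius $\ll2^k$, so that $\chi\equiv1$ on the relevant set and the bound reduces, after expanding $a_j(x,\xi)$ in $\xi$ near $|\xi|\sim2^k$ using finitely many of its seminorms, to the Bony estimate for $P^x_{\lesssim k}(\text{a bounded coefficient})\cdot P_k u$.)

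Summing over $j$ --- a geometric series of ratio $2^{-\rho}>1$, dominated by its $j\sim k$ term since $\rho<0$ --- gives $\|\Pi_k\|_{L^p}\lesssim 2^{(m-\rho)k}M^m_\rho(a)\|P_k u\|_{L^p}$, and, because the $\Pi_k$ have pairwise finite overlap in dyadic scale,
\[
\|T_a u\|_{H^{s-(m-\rho)}}\ \lesssim\ \big\|\big(2^{k(s-(m-\rho))}\|\Pi_k\|_{L^2}\big)_k\big\|_{\ell^2}\ \lesssim\ M^m_\rho(a)\,\big\|\big(2^{ks}\|P_k u\|_{L^2}\big)_k\big\|_{\ell^2}\ \sim\ M^m_\rho(a)\,\|u\|_{H^s},
\]
while the identical computation with $(\ell^2,L^2)$ replaced by $(\ell^\infty,L^\infty)$ gives $\|T_a u\|_{C^{s-(m-\rho)}_*}\lesssim M^m_\rho(a)\|u\|_{C^s_*}$, with constants depending only on $s,m,\rho$. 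The step demanding genuine care is the middle one: it is the annular --- in particular, bounded below --- frequency localization of $T_{a_j}(P_k u)$, resting on $\epsilon_2<1$, that makes the $\Pi_k$ finitely overlapping and hence lets one pass from the per-block bounds to the Besov/Sobolev norm of the sum; without it the $k$-sum tested against a generic $u$ would not converge. All remaining steps are routine bookkeeping, and the full details are those of Alazard--Burq--Zuily \cite{MR3260858}.
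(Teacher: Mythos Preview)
The paper does not prove this lemma; it is recalled verbatim from Alazard--Burq--Zuily \cite{MR3260858} (their Proposition~2.12), so there is no in-paper proof to compare against. Your argument is correct and is essentially the standard one used in that reference: spatially Littlewood--Paley decompose the symbol, use $\|P^x_j a(\cdot,\xi)\|_{L^\infty}\lesssim 2^{-\rho j}M^m_\rho(a)(1+|\xi|)^m$ from the $C^\rho_*$ definition, apply the $\Gamma^m_0$ bound to each piece, and sum the resulting geometric series in $j\lesssim k$ --- the key point being that $\rho<0$ makes the series dominated by its top term, yielding the extra $2^{-\rho k}$ factor. The frequency-localization observation (that $\epsilon_2<1$ keeps the output annular, hence the dyadic pieces almost orthogonal) is exactly the mechanism that makes the $k$-sum go through in both the $H^s$ and $C^s_*$ scales.
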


In Besov spaces, similar results hold for symbolic calculus.
\begin{lemma}[\hspace{1sp}\cite{MR3585049}]
Let $m, m^{'}, s\in \mathbb{R}$, $q\in [1,\infty]$ and $\rho\in [0,1]$.
\begin{enumerate}
\item If $a\in \Gamma^m_0$, then there exists a positive constant $K$ such that
\begin{equation*}
\|T_a\|_{B^s_{\infty, q}\rightarrow B^{s-m}_{\infty, q}} \leq KM^m_0(a).
\end{equation*}
\item If $a\in \Gamma^m_\rho$, and $b\in \Gamma^m_\rho$, then there exists a positive constant $K$ such that
\begin{equation}
  \|T_a T_b-T_{a b} \|_{B^s_{\infty, q} \rightarrow B^{s-m-m^{'}+\rho}_{\infty, q}} \leq K\left(M^m_\rho(a)M^{m^{'}}_0(b) + M^m_0(a)M^{m^{'}}_\rho(b)\right). \label{CompositionTwo}
\end{equation}
\end{enumerate}
In particular, when $q = \infty$, above symbolic calculus results hold for Zygmund space $C^s_{*}$.
\end{lemma}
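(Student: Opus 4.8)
The plan is to reduce both statements to the standard $L^2$-based ($H^s$) symbolic calculus results already quoted (or their well-known counterparts) by reinterpreting the Littlewood--Paley square-function characterization of Besov spaces in the endpoint $p=\infty$, $q$ arbitrary. Recall that $\|u\|_{B^s_{\infty,q}} \sim \|(2^{ks}\|P_k u\|_{L^\infty})_k\|_{\ell^q}$, so the whole argument is frequency-localized: it suffices to control $\|P_k(T_a u)\|_{L^\infty}$ and $\|P_k((T_aT_b - T_{ab})u)\|_{L^\infty}$ by the appropriate weighted sum of $\|P_j u\|_{L^\infty}$'s with summable-in-$\ell^q$ coefficients.

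\textbf{Part (1): boundedness of $T_a$.} First I would use the spectral localization property of the paradifferential quantization: by the support condition on the cutoff $\chi(\theta,\eta)$ in the definition of $T_a$, the Fourier transform of $P_k(T_a u)$ is supported (up to the usual $O(1)$ band) in frequencies comparable to $2^k$, and $P_k(T_a u) = P_k\, T_a\, \tilde P_k u$ where $\tilde P_k = \sum_{|j-k|\le N_0} P_j$ is a fattened projector. Then I would write, for each fixed $k$, the kernel of $P_k T_a \tilde P_k$ and estimate its $L^1$ norm in the spatial variable by $M^m_0(a)\, 2^{mk}$ uniformly in $k$; this is the classical kernel bound for paraproducts and uses only the $L^\infty$ bound on $a(\cdot,\xi)$ and its $\xi$-derivatives up to order $\lceil 3/2 + \rho \rceil$ — exactly the quantity $M^m_0(a)$. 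Young's convolution inequality on $\mathbb R$ then gives $\|P_k(T_a u)\|_{L^\infty} \lesssim M^m_0(a)\, 2^{mk}\, \|\tilde P_k u\|_{L^\infty}$. Multiplying by $2^{(s-m)k}$ and taking the $\ell^q$ norm, using that $\tilde P_k$ sums boundedly over the $\ell^q$ structure (finite overlap), yields $\|T_a u\|_{B^{s-m}_{\infty,q}} \le K M^m_0(a)\|u\|_{B^s_{\infty,q}}$. Equivalently one may simply invoke the corresponding $H^s\to H^{s-m}$ bound \eqref{TABound} together with the real-interpolation / duality machinery producing $B^s_{\infty,q}$; but the direct kernel argument is cleaner and avoids interpolation endpoint subtleties at $p=\infty$.

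\textbf{Part (2): composition $T_aT_b - T_{ab}$.} This is the main obstacle, though it is still essentially a bookkeeping reduction rather than a genuinely new estimate. The strategy is to exploit that the paradifferential calculus identities are purely local in frequency: the error operator $T_aT_b - T_{ab}$ has a kernel (or symbol) expansion whose leading terms cancel precisely because of the definition of the quantization, leaving a remainder that gains $\rho$ derivatives and is quantified by the mixed product $M^m_\rho(a)M^{m'}_0(b) + M^m_0(a)M^{m'}_\rho(b)$. Concretely I would follow the $H^s$ proof of \eqref{CompositionPara} (with the simplification that here $\rho\in[0,1]$ so only the zeroth-order term of $a\sharp b$ survives, which is why the statement is phrased with $ab$ and not $a\sharp b$), paying attention that every step there — decomposing into $\chi$-localized pieces, Taylor-expanding the symbol of $b$ in the spatial variable, absorbing the error into the $W^{\rho,\infty}$ (i.e.\ $C^\rho_*$) modulus of continuity of $b$ — produces kernels whose $L^1$-in-$x$ norms are controlled by the stated seminorms, with the crucial gain of $2^{-\rho k}$. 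Since every individual estimate in that proof is an $L^1_x$ kernel bound composed with Young's inequality, it upgrades verbatim from $L^p=L^2$ to $L^p = L^\infty$ (indeed to any $p$), and the frequency-summation in $\ell^q$ goes through by the same finite-overlap argument as in Part (1). The only care needed is that $\rho$ may equal $0$ or $1$: at $\rho=0$ the statement degenerates (no gain, and one should read it as the composition being order $m+m'$, consistent with Part (1) applied twice), and at $\rho=1$ one must check the Taylor remainder in the symbol expansion is handled by a first-order difference rather than a derivative, which is standard. The $C^s_*$ case is just $q=\infty$, so no separate argument is required.

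\textbf{Where the difficulty lies.} I expect no conceptual difficulty — both statements are ``the $L^2$ symbolic calculus transported to $L^\infty$'' — and the real work is (a) verifying that the kernel estimates underlying the $H^s$ proofs are genuinely $L^1_x$ bounds (they are, by the bandedness of the symbols in $\theta$) so that the exponent $p$ is a free parameter, and (b) tracking the $\ell^q$ summability of the frequency pieces, which follows from the almost-orthogonality of the $P_k$'s in frequency and Schur's test on the resulting banded coefficient matrix. I would present the full proof of Part (2) and merely indicate Part (1) as the ``diagonal'' special case, since the paper already cites \eqref{TABound} and \eqref{CompositionPara} and only the Besov upgrade needs to be recorded.
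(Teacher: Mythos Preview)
The paper does not actually give a proof of this lemma: it is stated with the citation \cite{MR3585049} and used as a black box, exactly like the other symbolic-calculus lemmas in Section~\ref{ss:para-estimates}. So there is no ``paper's own proof'' to compare against; the authors are simply quoting a known result.

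That said, your sketch is a correct outline of the standard argument. The key observation --- that the kernel estimates underlying the $H^s$ symbolic calculus are genuinely $L^1_x$ bounds on frequency-localized pieces, so that Young's inequality upgrades them from $L^2$ to $L^\infty$ for free, and then the $\ell^q$ summation over dyadic blocks is a finite-overlap / Schur-type argument --- is exactly how one proves the Besov version from scratch. Your remark that for $\rho\in[0,1]$ only the zeroth term of $a\sharp b$ survives (hence the statement is written with $ab$ rather than $a\sharp b$) is also the right way to read the hypothesis. One small caution: your aside that at $\rho=0$ the statement ``degenerates'' to no gain is not quite how the lemma reads --- at $\rho=0$ the bound still asserts that $T_aT_b-T_{ab}$ is of order $m+m'$, which is nontrivial (it is not automatic from Part~(1) applied twice, since $T_aT_b$ and $T_{ab}$ separately are only of order $m+m'$ and one needs to check the difference does not lose anything). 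But this is a minor point and does not affect the strategy.
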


When $a$ is only a function of $x$, $T_a u$ is the low-high paraproduct.
We then define 
\begin{equation*}
\Pi(a, u) := au -T_a u -T_u a
\end{equation*}
to be the balanced paraproduct.
For later use, we record below some estimates for paraproducts.

\begin{lemma}[\hspace{1sp}\cite{MR2768550}]
\begin{enumerate}
\item Let $\alpha, \beta \in \mathbb{R}$. 
If $\alpha+ \beta >0$, then
\begin{align}
&\|\Pi(a, u)\|_{H^{\alpha + \beta}(\mathbb{R})} \lesssim \| a\|_{C_{*}^\alpha(\mathbb{R})} \| u\|_{H^\beta(\mathbb{R})}, \label{HCHEstimate}\\
& \|\Pi(a, u)\|_{C_{*}^{\alpha + \beta}(\mathbb{R})} \lesssim \| a\|_{C_{*}^\alpha(\mathbb{R})} \| u\|_{C_{*}^\beta(\mathbb{R})}  \label{CCCEstimate}
\end{align}
\item Let $s_0, s_1, s_2$ be such that $s_0 \leq s_2$ and $s_0< s_1 +s_2 -\frac{1}{2}$, then
\begin{equation}
    \|T_a u\|_{H^{s_0}(\mathbb{R})} \lesssim \| u\|_{H^{s_1}(\mathbb{R})} \|u \|_{H^{s_2}(\mathbb{R})}.
\end{equation}
If in addition to the conditions above, $s_1 +s_2 >0$, then
\begin{equation}
\|au -T_a u \|_{H^{s_0}(\mathbb{R})} \lesssim \| u\|_{H^{s_1}(\mathbb{R})} \|u \|_{H^{s_2}(\mathbb{R})}.
\end{equation}
\item Let $m > 0$ and $s\in \mathbb{R}$, then
\begin{align}
&\|T_{a} u \|_{H^{s-m}(\mathbb{R})} \lesssim \|a\|_{C_{*}^{-m}(\mathbb{R})} \| u \|_{H^s(\mathbb{R})} \label{HsCmStar}, \\
&\|T_{a} u \|_{H^{s}(\mathbb{R})} \lesssim \|a\|_{L^\infty(\mathbb{R})} \| u \|_{H^s(\mathbb{R})}, \label{HsLinfty}\\
&\|T_{a} u \|_{C_{*}^{s-m}(\mathbb{R})} \lesssim \|a\|_{C_{*}^{-m}(\mathbb{R})} \| u \|_{C_{*}^s(\mathbb{R})}, \label{CsCmStar}\\
&\|T_{a} u \|_{C_{*}^{s}(\mathbb{R})} \lesssim \|a\|_{L^\infty(\mathbb{R})} \| u \|_{C_{*}^s(\mathbb{R})}. \label{CsLInfty}
\end{align}
\end{enumerate}
\end{lemma}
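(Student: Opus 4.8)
The plan is to derive every inequality in the statement from elementary Littlewood--Paley block estimates, using the Bony decomposition $au = T_a u + T_u a + \Pi(a,u)$ and the two structural facts that drive all such bounds: the $k$-th block $S_{k-N}a\cdot P_k u$ of the low--high paraproduct $T_a u$ is frequency supported in an annulus $|\xi|\sim 2^k$, so the blocks are frequency-separated and almost orthogonal in $L^2$; whereas the blocks $P_j a\cdot P_k u$ with $|j-k|\le N$ of the balanced piece $\Pi(a,u)$ are only supported in $|\xi|\lesssim 2^k$, so that after applying a projector $P_\ell$ one is forced into the range $k\gtrsim \ell$ and the summability over $k$ is exactly what requires a strictly positive total regularity.

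For the low--high estimates \eqref{HsCmStar}--\eqref{CsLInfty} and the $T_a u$ half of item (2), write $T_a u=\sum_k S_{k-N}a\cdot P_k u$ and estimate $\|S_{k-N}a\|_{L^\infty}$ according to the hypothesis on $a$: directly by $\|a\|_{L^\infty}$, giving \eqref{HsLinfty} and \eqref{CsLInfty}; by $\sum_{j\le k-N}\|P_j a\|_{L^\infty}\lesssim \sum_{j\le k-N}2^{jm}\|a\|_{C_*^{-m}}\lesssim 2^{km}\|a\|_{C_*^{-m}}$ when $m>0$, a convergent geometric sum, giving \eqref{HsCmStar} and \eqref{CsCmStar}; or, inserting Bernstein $\|P_j a\|_{L^\infty}\lesssim 2^{j/2}\|P_j a\|_{L^2}$ and Cauchy--Schwarz, by $2^{k\max(0,\,1/2-s_1)}\|a\|_{H^{s_1}}$ (with a harmless $\sqrt{k}$ loss at $s_1=\tfrac12$), giving the $H^{s_0}$ bound. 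In every case $\|P_k(T_a u)\|\lesssim \|S_{k-N}a\|_{L^\infty}\|P_k u\|$; multiplying by the weight $2^{ks_0}$ (resp.\ $2^{k(s-m)}$) leaves a power $2^{k\theta}$ with $\theta\le 0$ precisely under the stated conditions ($m>0$, or $s_0\le s_2$ together with $s_0<s_1+s_2-\tfrac12$) times the $\ell^2$ (resp.\ $\ell^\infty$) sequence coming from $u$, and one concludes by taking $\ell^2$ or $\ell^\infty$ norms in $k$; the strict inequality in (2) is what absorbs the $s_1=\tfrac12$ endpoint.

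For the balanced paraproduct, write $\Pi(a,u)=\sum_k P_k a\cdot \widetilde P_k u$ with $\widetilde P_k=\sum_{|j-k|\le N}P_j$; each summand is supported in $|\xi|\lesssim 2^k$, so $P_\ell\Pi(a,u)=\sum_{k\gtrsim \ell}P_\ell(P_k a\cdot\widetilde P_k u)$. For \eqref{CCCEstimate} bound the generic term by $\|P_k a\|_{L^\infty}\|\widetilde P_k u\|_{L^\infty}\lesssim 2^{-k(\alpha+\beta)}\|a\|_{C_*^\alpha}\|u\|_{C_*^\beta}$ and sum over $k\gtrsim\ell$: the geometric series converges exactly because $\alpha+\beta>0$, leaving the weight $2^{-\ell(\alpha+\beta)}$. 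For \eqref{HCHEstimate} run the same computation with $\|\widetilde P_k u\|_{L^2}$ and Bernstein for the $P_\ell$ localization, obtaining $\|P_\ell\Pi(a,u)\|_{L^2}\lesssim 2^{\ell/2}\sum_{k\gtrsim\ell}2^{-k(\alpha+\beta+1/2)}\|a\|_{C_*^\alpha}d_k$ with $(d_k)=(2^{ks_2}\|P_k u\|_{L^2})\in\ell^2$; summing the geometric factor over $k\gtrsim\ell$ and recognizing the outcome as an $\ell^2$-summable sequence in $\ell$ via a discrete Young (Schur) inequality yields the $H^{\alpha+\beta}$ bound. Finally the paralinearization estimate $\|au-T_a u\|_{H^{s_0}}$ follows from $au-T_a u=T_u a+\Pi(a,u)$: $T_u a$ is controlled by the low--high bound just proved with $a$ and $u$ interchanged, and $\Pi(a,u)\in H^{s_1+s_2-\frac12}\hookrightarrow H^{s_0}$ by \eqref{HCHEstimate} together with the extra hypothesis $s_1+s_2>0$.

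Everything above is classical Littlewood--Paley calculus; the only point requiring genuine care, and the one I regard as the main obstacle, is the bookkeeping of the dyadic sums in the $H^s$ estimates for $\Pi$ and $T_a u$ — one must convolve an $\ell^2$ sequence against a geometrically decaying kernel in the dyadic index while preserving $\ell^2$, handled by Cauchy--Schwarz or Schur's test, and one must use the strict inequality $s_0<s_1+s_2-\tfrac12$ (respectively $\alpha+\beta>0$, through $\sum_{k\gtrsim\ell}2^{-k(\alpha+\beta)}\lesssim 2^{-\ell(\alpha+\beta)}$) precisely to absorb the otherwise logarithmically divergent endpoint. Since this is the content of \cite{MR2768550}, I will only record these dyadic block estimates and refer there for the remaining routine details.
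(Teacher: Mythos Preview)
The paper does not prove this lemma at all; it is simply quoted from \cite{MR2768550} as a standard toolbox result, so there is no ``paper's own proof'' to compare against. Your sketch is the standard Littlewood--Paley argument that one finds in that reference, and the strategy is correct in every part.

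One small slip worth cleaning up: in your treatment of \eqref{HCHEstimate} you invoke Bernstein and write $\|P_\ell\Pi(a,u)\|_{L^2}\lesssim 2^{\ell/2}\sum_{k\gtrsim\ell}2^{-k(\alpha+\beta+1/2)}\|a\|_{C_*^\alpha}d_k$, which mixes an $L^1\!\to\! L^2$ Bernstein step with a $C_*^\alpha$ hypothesis on $a$ that naturally gives $L^\infty$ control of $P_k a$. The cleaner (and more direct) version is to bound $\|P_\ell(P_k a\cdot\widetilde P_k u)\|_{L^2}\le \|P_k a\|_{L^\infty}\|\widetilde P_k u\|_{L^2}\lesssim 2^{-k(\alpha+\beta)}\|a\|_{C_*^\alpha}d_k$ with $d_k=2^{k\beta}\|\widetilde P_k u\|_{L^2}$ (not $2^{ks_2}$ as you wrote), so that $2^{\ell(\alpha+\beta)}\|P_\ell\Pi(a,u)\|_{L^2}\lesssim \|a\|_{C_*^\alpha}\sum_{k\gtrsim\ell}2^{(\ell-k)(\alpha+\beta)}d_k$, and the Young/Schur step you describe finishes it. This is only a cosmetic correction; your identification of the key point --- that $\alpha+\beta>0$ (resp.\ the strict inequality $s_0<s_1+s_2-\tfrac12$) is exactly what makes the dyadic kernel summable --- is right on target.
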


Using the above paraproducts estimates, we get the following results.
\begin{lemma}[\hspace{1sp}\cite{MR2768550}]
\begin{enumerate}
\item If $s \geq 0$, then
\begin{align}
&\|uv\|_{H^s(\mathbb{R})} \lesssim \|u\|_{H^s(\mathbb{R})}\|v\|_{L^\infty(\mathbb{R})}+ \|u\|_{L^\infty(\mathbb{R})}\|v\|_{H^s(\mathbb{R})},\label{HsProduct} \\
&\|uv\|_{C_{*}^s(\mathbb{R})} \lesssim \|u\|_{C_{*}^s(\mathbb{R})}\|v\|_{L^\infty(\mathbb{R})}+ \|u\|_{L^\infty(\mathbb{R})}\|v\|_{C_{*}^s(\mathbb{R})}. \label{CsProduct}
\end{align}    
\item  Let $s_0, s_1, s_2$ be such that $s_1 +s_2 >0$, $s_0 \leq s_1$, $s_0 \leq s_2$, and $s_0<s_1+s_2-\frac{1}{2}$, then
\begin{equation}
    \|uv\|_{H^{s_0}(\mathbb{R})} \lesssim \|u\|_{H^{s_1}(\mathbb{R})} \|v\|_{H^{s_2}(\mathbb{R})}.
\end{equation}
\item Let  a smooth function $F\in C^\infty(\mathbb{C}^N)$ satisfying $F(0) = 0$.
There exists a nondecreasing function $\mathcal{F}: \mathbb{R}_{+} \rightarrow \mathbb{R}_{+}$ such that,
\begin{align}
&\|F(u) \|_{H^s} \leq \mathcal{F}(\|u\|_{L^\infty}) \|u\|_{H^s},\quad s\geq 0,  \label{MoserOne}\\
&\|F(u) \|_{C_{*}^s} \leq \mathcal{F}(\|u\|_{L^\infty}) \|u\|_{C_{*}^s},\quad s>0. \label{MoserTwo}
\end{align}
\item Let $s_1 > s_2 > 0$, then
\begin{equation}
    \|uv\|_{C^{-s_2}_*}\lesssim \|u\|_{C^{s_1}_*} \|v\|_{C^{s_2}_*}. \label{CNegativeAlpha}
\end{equation}
\end{enumerate}
\end{lemma}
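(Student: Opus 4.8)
The final statement collects the classical multiplication and composition estimates of Bony's paradifferential calculus, so the plan is to read them off from the paraproduct and paradifferential bounds already recorded above. The one tool behind items (1), (2) and (4) is Bony's decomposition
\[
uv \;=\; T_u v + T_v u + \Pi(u,v),
\]
and in each case the strategy is the same: estimate the two low--high paraproducts $T_u v,\ T_v u$ and the balanced remainder $\Pi(u,v)$ separately, identify the target regularity reached by each, and keep the worst. Item (3) needs one further, essentially self-contained, telescoping argument.

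For item (1) I would bound $T_v u$ and $T_u v$ directly by \eqref{HsLinfty} and \eqref{CsLInfty}, which give $\|T_v u\|_{H^s}\lesssim\|v\|_{L^\infty}\|u\|_{H^s}$, $\|T_u v\|_{H^s}\lesssim\|u\|_{L^\infty}\|v\|_{H^s}$ (and the same in $C^s_{*}$) for every $s$, and estimate $\Pi(u,v)$ by a direct Littlewood--Paley summation: since $P_l\Pi(u,v)$ only collects the diagonal pieces $P_j u\,\widetilde P_j v$ with $j\gtrsim l$, one gets $\|P_l\Pi(u,v)\|_{L^p}\lesssim\|u\|_{L^\infty}\sum_{j\gtrsim l}\|\widetilde P_j v\|_{L^p}$, and summing against $2^{ls}$ converges because $s>0$ (the case $s=0$ being just H\"older's inequality together with $L^\infty\hookrightarrow C^0_{*}$). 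For item (2) the point is that $T_v u$ lands in $H^{s_0}$ precisely when $s_0\le s_1$ (via \eqref{HsLinfty} if $s_2>\tfrac12$ and via \eqref{HsCmStar} with $v\in C^{s_2-1/2}_{*}$ otherwise), $T_u v$ lands in $H^{s_0}$ precisely when $s_0\le s_2$, and the high--high interaction $\Pi(u,v)$ lands in $H^{s_0}$ precisely when $s_0<s_1+s_2-\tfrac12$ (via \eqref{HCHEstimate} when $s_1+s_2>\tfrac12$, and via the Bernstein bound $\|P_l(P_ju\,\widetilde P_jv)\|_{L^2}\lesssim 2^{l/2}\|P_ju\|_{L^2}\|\widetilde P_jv\|_{L^2}$ followed by an $\ell^2$ summation otherwise); these are exactly the three hypotheses of the statement, with $s_1+s_2>0$ guaranteeing that the pairings are meaningful. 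For item (4), since $s_1>s_2>0$ both factors lie in $L^\infty$, so \eqref{CsLInfty} puts $T_u v$ in $C^{s_2}_{*}$ and $T_v u$ in $C^{s_1}_{*}$, while \eqref{CCCEstimate} puts $\Pi(u,v)$ in $C^{s_1+s_2}_{*}$; all three embed into $C^{-s_2}_{*}$, which is in fact stronger than claimed.

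For item (3) I would use the telescoping (para-linearization) identity: writing $P_{<j}:=\sum_{k<j}P_k$ and using $F(0)=0$,
\[
F(u)=\sum_{j\ge 0}\bigl(F(P_{<j+1}u)-F(P_{<j}u)\bigr)=\sum_{j\ge 0} g_j\cdot P_j u, \qquad g_j:=\int_0^1 (DF)\bigl(P_{<j}u+tP_j u\bigr)\,dt,
\]
where each $g_j$ is bounded in $L^\infty$ by $\sup_{|z|\le\|u\|_{L^\infty}}|DF(z)|=:\mathcal{F}(\|u\|_{L^\infty})$, a nondecreasing function of $\|u\|_{L^\infty}$; this is where $F(0)=0$ is used, since otherwise $F(u)$ would carry the constant $F(0)\notin H^s(\mathbb{R})$. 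By the chain rule and Bernstein's inequality $g_j$ is a symbol spectrally localized at frequency $\lesssim 2^j$ (its derivatives obey $\|\partial^N g_j\|_{L^\infty}\lesssim 2^{jN}\mathcal{F}_N(\|u\|_{L^\infty})$), so only $P_l$ with $l\lesssim j$ effectively see $g_j P_j u$, giving $\|P_l F(u)\|_{L^p}\lesssim\mathcal{F}(\|u\|_{L^\infty})\sum_{j\gtrsim l}\|P_j u\|_{L^p}$ up to a rapidly decaying tail. Then \eqref{MoserOne} follows by Young's inequality on the $\ell^2$ side for all $s\ge 0$ (the $s=0$ case being the elementary bound $\|F(u)\|_{L^2}\le\mathcal{F}(\|u\|_{L^\infty})\|u\|_{L^2}$ from the fundamental theorem of calculus), and \eqref{MoserTwo} follows by a geometric summation on the $\ell^\infty$ side, which converges only because $s>0$.

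I do not expect any single hard estimate here; the main obstacle is the index bookkeeping --- verifying that each of the three Bony pieces lands in exactly the target space under exactly the stated hypotheses, in particular handling the half-derivative loss $s_0<s_1+s_2-\tfrac12$ in item (2) uniformly across the sub-regimes $s_1+s_2>\tfrac12$ and $s_1+s_2\le\tfrac12$, and tracking why the Zygmund statements require the strict inequality $s>0$. For the Moser estimate the delicate point is instead the uniform-in-$j$ $L^\infty$ control of the symbols $g_j$ together with the frequency-localization argument that limits which $P_l$ interact with $g_j P_j u$.
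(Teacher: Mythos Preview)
The paper does not prove this lemma at all: it is stated as a citation from \cite{MR2768550} (Bahouri--Chemin--Danchin), like the other paradifferential estimates collected in Section~\ref{ss:para-estimates}. Your proposal correctly reconstructs the standard Bony-decomposition argument behind items (1), (2), (4) and the Meyer telescoping argument behind (3), which is exactly what one finds in that reference; so you have supplied more than the paper does, and your sketch is sound.
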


Later, when we paralinearize the capillary terms in the water wave system, we need the following result of paralinearization.
\begin{lemma}[Paralinearization \cite{MR3770970}] \label{t:Paralinear}
Let $s, \rho>0$, and $F(u)$ be a smooth function of $u$, then for any $u\in H^s(\mathbb{R}^d)\cap C^\rho_{*}(\mathbb{R}^d)$,
\begin{equation*}
    \|F(u)-F(0)-T_{F^{'}(u)}u\|_{H^{s+\rho}(\mathbb{R}^d)} \leq C(\|u\|_{L^\infty(\mathbb{R}^d)})\|u\|_{C^\rho_{*}(\mathbb{R}^d)}\|u\|_{H^s(\mathbb{R}^d)}.
\end{equation*}
\end{lemma}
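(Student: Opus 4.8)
The plan is to recall Bony's telescoping proof of the paralinearization theorem (see \cite{MR2418072} and \cite{MR3770970}), which I now sketch. Since the estimate is local in regularity I may as well assume $\rho\in(0,1)$; the case $\rho\ge 1$ only produces more terms in the chain rule below. Write $S_j:=\sum_{k<j}P_k$, so that $S_{j+1}-S_j=P_j$ and $S_ju\to u$. Telescoping and the fundamental theorem of calculus (with the $t$-independent factor $P_ju$ pulled out of the integral) give the Bony decomposition
\begin{equation*}
F(u)-F(0)=\sum_{j\ge0}\bigl(F(S_{j+1}u)-F(S_ju)\bigr)=\sum_{j\ge0}m_j\,P_ju,\qquad m_j:=\int_0^1F'(S_ju+tP_ju)\,dt .
\end{equation*}

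Next I would compare this with $T_{F'(u)}u=\sum_{j\ge0}T_{F'(u)}P_ju$. Because of the spectral cutoffs $\chi,\psi$ in the definition of $T$, there is a fixed index $j_0$ (depending only on $\epsilon_1,\epsilon_2$) such that for $j\ge j_0$ one has the exact identity $T_{F'(u)}P_ju=(F'(u))_{<j}\,P_ju$, where $(F'(u))_{<j}$ is the truncation of $F'(u)$ to frequencies $\lesssim\epsilon_1 2^{j}$. The finitely many low indices $j<j_0$ produce a spectrally bounded, quadratic-in-$u$ remainder, which is controlled using $\|T_av\|_{L^2}\lesssim\|a\|_{L^\infty}\|v\|_{L^2}$ (a special case of \eqref{HsLinfty}) together with the cancellation of the linear term $F'(0)u$; hence it suffices to bound $\sum_{j\ge j_0}h_j$ in $H^{s+\rho}$, where $h_j:=\bigl(m_j-(F'(u))_{<j}\bigr)P_ju$.

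To estimate the symbol difference I would split $m_j-(F'(u))_{<j}$ into the three pieces $m_j-F'(S_ju)$, $F'(S_ju)-F'(u)$, and $F'(u)-(F'(u))_{<j}$. Using the mean value theorem, the Littlewood--Paley bounds $\|P_ju\|_{L^\infty}\lesssim2^{-j\rho}\|u\|_{C^{\rho}_{*}}$ and $\|(\mathrm{Id}-S_j)u\|_{L^\infty}\lesssim2^{-j\rho}\|u\|_{C^{\rho}_{*}}$ (valid since $\rho>0$), and the Moser estimate \eqref{MoserTwo} applied to $F'-F'(0)$ for the last piece, each of the three is $\lesssim C(\|u\|_{L^\infty})\,2^{-j\rho}\|u\|_{C^{\rho}_{*}}$ in $L^\infty$. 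Together with $\|P_ju\|_{L^2}\lesssim2^{-js}c_j\|u\|_{H^s}$ for some $(c_j)\in\ell^2$ of norm $\lesssim1$, this gives $\|h_j\|_{L^2}\lesssim C(\|u\|_{L^\infty})\,2^{-j(s+\rho)}c_j\,\|u\|_{C^{\rho}_{*}}\|u\|_{H^s}$. One then invokes the standard summation lemma: if the $h_j$ are spectrally supported in $\{|\xi|\lesssim2^{j}\}$ and $\|h_j\|_{L^2}\le c_j2^{-j\sigma}$ with $(c_j)\in\ell^2$ and $\sigma>0$, then $\sum_jh_j\in H^{\sigma}$ with $\|\sum_jh_j\|_{H^\sigma}\lesssim\|(c_j)\|_{\ell^2}$ (apply $P_l$, observe that only the indices $j\gtrsim l$ contribute, and sum by Young's inequality in $\ell^2$). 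Since $\sigma=s+\rho>0$, this yields the desired bound.

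The one genuinely delicate point — and the step I expect to be the main obstacle — is that $h_j$ is not literally spectrally supported in a ball of radius $\sim2^{j}$: the factor $m_j$ is $F'$ evaluated at a function band-limited at frequency $\lesssim2^{j+1}$, and a smooth (non-polynomial) composition does not preserve band-limitedness. This is handled in the usual way. By Bernstein's inequality and the chain rule, the argument $S_ju+tP_ju$ has $L^\infty$-bounded values and derivatives of order $m$ of size $\lesssim2^{mj}$, so the high-frequency tail of $m_j$ decays faster than any power: $\|P_lm_j\|_{L^\infty}\lesssim_N2^{-N(l-j)}C_N(\|u\|_{L^\infty})\,2^{-j\rho}\|u\|_{C^{\rho}_{*}}$ for $l>j$. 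Equivalently, one re-decomposes $h_j=T_{m_j-(F'(u))_{<j}}(P_ju)+T_{P_ju}\bigl(m_j-(F'(u))_{<j}\bigr)+\Pi\bigl(P_ju,\,m_j-(F'(u))_{<j}\bigr)$: the first term is exactly spectrally localized at frequency $2^{j}$ and carries the estimate of the previous paragraph, while the last two only see the rapidly decaying tail of $m_j$ (note $(F'(u))_{<j}$ has no frequencies above $2^{j}$) and are therefore summable with room to spare, keeping one factor of $u$ in $L^2$-type norms and the other in $C^{\rho}_{*}$-type norms throughout. Modulo this bookkeeping the argument is the classical one, and for the full details I would refer to \cite{MR2418072, MR3770970}.
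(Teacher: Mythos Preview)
The paper does not give its own proof of this lemma: it is stated as a known result and attributed to \cite{MR3770970} (and, by extension, to the classical Bony paralinearization theorem in \cite{MR2418072}). Your sketch is precisely the standard telescoping argument that underlies those references, and it is correct as outlined, including your handling of the spectral-localization caveat for the coefficients $m_j$. So your approach is not different from the paper's; it simply supplies what the paper outsources to the literature.
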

We remark that this lemma also works for multivariable functions $F$.
We simply need to replace $F^{'}$ by partial derivatives of $F$.
See for instance Lemma $3.26$ in Alazard-Burq-Zuily ~\cite{MR2805065}.

We also record here Lemma $B.7$ of Zhu \cite{MR4072685} on the real (or imaginary) part of the paradifferential operators.
\begin{lemma}[\hspace{1sp}\cite{MR4072685}] \label{t:RealIm}
Let $a(x,\xi)$ be a symbol such that it is either a real-valued even function of $\xi$, or a pure imaginary-valued odd function of $\xi$.
Then for $u\in C^\infty(\mathbb{R}, \mathbb{C})$,
\begin{equation*}
a(x, D)\Re u = \Re a(x,D) u, \quad T_a \Re u = \Re T_a u.
\end{equation*}
Similar results hold when taking the imaginary parts. 
\end{lemma}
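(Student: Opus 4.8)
The plan is to derive the statement from a single, slightly stronger fact: under either hypothesis the operators $a(x,D)$ and $T_a$ commute with complex conjugation, that is, $a(x,D)\bar u=\overline{a(x,D)u}$ and $T_a\bar u=\overline{T_a u}$ for $u\in C^\infty(\mathbb R,\mathbb C)$. Granting this, the lemma is immediate by linearity: writing $\Re u=\tfrac12(u+\bar u)$ one gets $T_a\Re u=\tfrac12(T_a u+T_a\bar u)=\tfrac12(T_a u+\overline{T_a u})=\Re(T_a u)$, and the identity for $a(x,D)$ follows in exactly the same way; the imaginary-part versions come from $\Im u=\tfrac1{2i}(u-\bar u)$ and the same conjugation identity. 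So the whole proof reduces to checking the conjugation-commutation, and the content of the two hypotheses on $a$ is precisely that they force the symmetry $a(x,\xi)=\overline{a(x,-\xi)}$ for all $(x,\xi)$ (if $a$ is real and even in $\xi$, then $\overline{a(x,-\xi)}=a(x,-\xi)=a(x,\xi)$; if $a$ is purely imaginary and odd in $\xi$, then $\overline{a(x,-\xi)}=-a(x,-\xi)=a(x,\xi)$).

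For $a(x,D)$ I would argue directly on the Fourier side. Using the standard quantization $a(x,D)u(x)=\tfrac1{2\pi}\int e^{ix\xi}a(x,\xi)\hat u(\xi)\,d\xi$ together with $\widehat{\bar u}(\xi)=\overline{\hat u(-\xi)}$, conjugating and performing the change of variables $\xi\mapsto-\xi$ gives $\overline{a(x,D)u}(x)=\tfrac1{2\pi}\int e^{ix\xi}\,\overline{a(x,-\xi)}\,\widehat{\bar u}(\xi)\,d\xi$, which is exactly $a(x,D)\bar u(x)$ once one substitutes $\overline{a(x,-\xi)}=a(x,\xi)$. Since $u\in C^\infty$ all integrals are absolutely convergent and the manipulation is rigorous.

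For the paradifferential operator $T_a$ the computation is structurally identical once the regularizing cutoffs are tracked. Starting from $\widehat{T_a u}(\xi)=\tfrac1{2\pi}\int\chi(\xi-\eta,\eta)\hat a(\xi-\eta,\eta)\psi(\eta)\hat u(\eta)\,d\eta$, I would compute $\widehat{\overline{T_a u}}(\xi)=\overline{\widehat{T_a u}(-\xi)}$, use that $\chi$ and $\psi$ are real-valued admissible cutoffs — so that $\chi(-\theta,-\eta)=\chi(\theta,\eta)$ (one may take $\chi$ even in its first variable and a function of $|\eta|$ in its second) and $\psi(-\eta)=\psi(\eta)$ — and change variables $\eta\mapsto-\eta$. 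One is then led to $\widehat{\overline{T_a u}}(\xi)=\tfrac1{2\pi}\int\chi(\xi-\eta,\eta)\,\overline{\hat a(-(\xi-\eta),-\eta)}\,\psi(\eta)\widehat{\bar u}(\eta)\,d\eta$, which matches $\widehat{T_a\bar u}(\xi)$ precisely when $\hat a(\theta,\zeta)=\overline{\hat a(-\theta,-\zeta)}$ for all $(\theta,\zeta)$; taking the inverse Fourier transform in $x$, this is once more the symmetry $a(x,\zeta)=\overline{a(x,-\zeta)}$, guaranteed by either hypothesis. Hence $T_a\bar u=\overline{T_a u}$ and the lemma follows.

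The only real work is bookkeeping — lining up the Fourier conventions, the conjugations of $\hat u$ and of $\hat a$ (the Fourier transform of $a$ in $x$), and the sign changes in the arguments of $\chi$ and $\psi$ — and confirming that the cutoffs used in the paper's definition of $T_a$ indeed satisfy the evenness properties above. There is no analytic obstacle: the hypotheses on $a$ are designed exactly to yield $a(x,\xi)=\overline{a(x,-\xi)}$, which is the one identity everything hinges on.
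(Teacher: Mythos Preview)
Your argument is correct and is essentially the standard direct verification: the hypotheses on $a$ are exactly those that give $a(x,\xi)=\overline{a(x,-\xi)}$, from which commutation with complex conjugation follows on the Fourier side, and the evenness of the cutoffs $\chi,\psi$ in the paper's definition of $T_a$ (they depend only on $|\theta|$ and $|\eta|$) makes the paradifferential case go through identically. The paper itself does not prove this lemma at all --- it simply records it as Lemma~B.7 of \cite{MR4072685} --- so there is nothing to compare against; your write-up supplies what the paper omits. One minor quibble: ``$u\in C^\infty$'' alone does not make the Fourier integrals absolutely convergent, so you should either take $u$ Schwartz (and extend by density/continuity) or phrase the manipulations distributionally.
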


To define the para-composition operator later in Section \ref{s:Strichartz}, we also consider a homogeneous Littlewood-Paley frequency composition,
\begin{equation*}
    I = \sum_{k\in \mathbb{Z}} \tilde{P}_k, 
\end{equation*}
where  $\tilde{P}_k$ is the smooth frequency projection  that selects  frequency $2^k$.
The \textit{truncated paradifferential operator} $\dot{T}_a$ introduced in Nguyen \cite{MR3724757} is defined by
\begin{equation*}
  \dot{T}_a u = \sum_{k=1}^{+\infty} \sum_{j=-\infty}^{k-N} \tilde{P}_j a \tilde{P}_k u,  
\end{equation*}
for some fixed positive integer $N$. 
$\dot{T}_a u$ satisfies the same estimates as $T_a u$.

In Section \ref{s:Strichartz}, we will use Lemma $3.2$ from Alazard-Burq-Zuily \cite{MR2931520} to estimate the composition of a function with a diffeomorphism.
\begin{lemma}[\hspace{1sp}\cite{MR2931520}] \label{t:Composition}
Let $p\in \mathbb{N}^{*}$, and $\kappa : \mathbb{R} \rightarrow \mathbb{R}$  be a diffeomorphism such that $\partial_x \kappa \in W^{p-1, \infty}(\mathbb{R})$.
Set $\chi = \kappa^{-1}$,  then for all $F\in H^s(\mathbb{R})$ with $0\leq s \leq p$,  we have $F\circ \kappa \in H^s(\mathbb{R})$, and 
\begin{equation*}
    \|F\circ \kappa \|_{H^s} \leq \|\chi^{'} \|_{L^\infty} C\left( \|\partial_x \kappa \|_{W^{p-1, \infty}} \right) \|F\|_{H^s},
\end{equation*}
where $C(x)$ is an increasing function from $\mathbb{R}^+$ to $\mathbb{R}^+$.
\end{lemma}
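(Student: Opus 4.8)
\textbf{Proof strategy for Theorem \ref{t:MainWellPosed}.}

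The plan is to deduce local well-posedness from the two a priori estimates already established, following the by-now standard scheme of Alazard--Burq--Zuily and Nguyen, but keeping track of the extra structure (vorticity $\gamma$, Wahl\'en variables). First I would set up the iteration space. Given initial data $(\W_0, R_0)\in\mathcal{H}^s(\mathbb{R})$ with $s>\frac54$, fix $r$ with $1<r<s$ and $\mu<\frac14$ with $s-\frac12+\mu>r$; note $s-\frac12+\mu>r$ is compatible precisely because $s>\frac54$. The solution will be sought in
\begin{equation*}
X_T := C^0([0,T];\mathcal{H}^s(\mathbb{R}))\cap L^4([0,T];\mathcal{W}^{s-\frac12+\mu,\infty}(\mathbb{R})),
\end{equation*}
and the smallness of $T$ will be used to absorb the Strichartz norm into the energy norm through the $L^4_t$ integrability in Theorem \ref{t:MainTwo}.

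\textbf{Step 1: existence via regularization (Bona--Smith type).} Mollify the data to $(\W_0^\delta,R_0^\delta)$, which lies in every $\mathcal{H}^\sigma$; for these smooth data the energy estimate \eqref{EstimateWREnergy} together with a standard parabolic-regularization or frequency-truncation argument gives smooth solutions $(\W^\delta,R^\delta)$ on a $\delta$-dependent time interval. The key is a uniform lifespan: combining the energy estimate \eqref{EstimateWREnergy} with the embedding $H^{s}\hookrightarrow C^{s-\frac12}_*$ \eqref{HsCsEmbed} controls $\CalA$ by $\|(\W^\delta,R^\delta)\|_{\mathcal{H}^s}$ (here $s>\frac54>\frac32-\frac12$ suffices, since $\CalA$ only asks for $C^{1+\epsilon}_*$ of $\W$ and $C^{1/2}_*$ of $R$, i.e.\ $\mathcal{H}^{1+\epsilon}$-type regularity $\le \mathcal{H}^s$ with a small $\epsilon$); and Theorem \ref{t:MainTwo} controls $\|(\W^\delta,R^\delta)\|_{L^4_t\mathcal{W}^{s-\frac12+\mu,\infty}}$, hence $\|(\W^\delta,R^\delta)\|_{L^4_t\mathcal{W}^r}$ and thus $\int_0^T\CalB\,dt\lesssim T^{3/4}\|(\W^\delta,R^\delta)\|_{L^4_t\mathcal{W}^{r}}$ plus lower-order terms. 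Feeding this back into \eqref{EstimateWREnergy} and running a continuity/bootstrap argument produces a time $T=T(\|(\W_0,R_0)\|_{\mathcal{H}^s})>0$ independent of $\delta$ and a uniform bound for $(\W^\delta,R^\delta)$ in $X_T$.

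\textbf{Step 2: convergence and the limit solution.} To pass to the limit one needs contraction-type estimates for the \emph{difference} of two solutions. Because \eqref{e:WR} is quasilinear, the difference equation loses derivatives, so I would estimate $(\W^\delta-\W^{\delta'},R^\delta-R^{\delta'})$ in the weaker norm $\mathcal{H}^{s-1}$ (or $\mathcal{H}^{1/2}$-type energy), using the same paradifferential symmetrization as in Section \ref{s:reduction} applied to the linearized system; the Bona--Smith trick then upgrades weak convergence in $\mathcal{H}^{s-1}$ plus uniform boundedness in $\mathcal{H}^s$ to strong convergence in $C^0_t\mathcal{H}^s$, via interpolation and the fact that $\|(\W_0^\delta,R_0^\delta)-(\W_0,R_0)\|_{\mathcal{H}^s}\to0$ with quantitative rates in lower norms. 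This yields a solution $(\W,R)\in C^0([0,T];\mathcal{H}^s)$, which also lies in $L^4_t\mathcal{W}^{s-\frac12+\mu,\infty}$ by lower semicontinuity of that norm; that $(\W,R)$ actually solves \eqref{e:WR} follows by passing to the limit in the equation in the sense of distributions, each term making sense because $s>\frac54$ is above the threshold where the nonlinearities (quotients by $1+\W$, the capillary term, the auxiliary functions $\ua,\ub,\underline M$) are continuous on $\mathcal{H}^s$ — these continuity/boundedness facts are exactly the ones assembled in Section \ref{s:Norms}.

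\textbf{Step 3: uniqueness and continuous dependence.} For $s>\frac32$, $\mathcal{H}^s\hookrightarrow\mathcal{W}^{r}$ directly via \eqref{HsCsEmbed}, so the difference estimate of Step 2 closes at the level of $\mathcal{H}^s$ and gives outright uniqueness; this is recorded in item (3). For $\frac54<s\le\frac32$ one only gets uniqueness within the class of limits of smooth solutions, as stated. Continuous dependence is then obtained by the Bona--Smith argument again: given $(\W_{n,0},R_{n,0})\to(\W_0,R_0)$ in $\mathcal{H}^s$, regularize each, use the uniform-in-$n$ lifespan from Step 1, the difference estimates in $\mathcal{H}^{s-1}$, and interpolation to conclude $\|(\W_n,R_n)-(\W,R)\|_{C^0_t\mathcal{H}^s}\to0$; the $L^4_t\mathcal{W}^{s-\frac12+\mu,\infty}$ convergence comes along as well, which is what makes the iteration self-consistent.

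\textbf{Main obstacle.} The genuinely delicate point is Step 2: establishing the difference/contraction estimate with the correct derivative count. The paradifferential symmetrization from Section \ref{s:reduction} must be carried out for the linearized system so that the Strichartz input of Theorem \ref{t:MainTwo} (which only gives $s-\frac12+\mu<s$ derivatives in $L^4_t$, i.e.\ a $\frac14^-$ gain over Sobolev embedding) is exactly enough to control the $\CalB$-type coefficients appearing in the difference equation; the $\frac54$ threshold is sharp here because $\CalB$ requires $\|\W\|_{C^{3/2}_*}$ and $\|R\|_{C^{1+\epsilon}_*}$, and $L^4_t$-Strichartz supplies $C^{s-1+\mu}_*$ for $\W_\alpha$, which meets $3/2$ from below precisely when $s>\frac54$. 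I expect the bookkeeping of how $\gamma$ counts as half a derivative (so that the $\gamma$-terms in $\CalB$, namely $\gamma\|\W\|_{C^{1+\epsilon}_*}$ and $\gamma\|R\|_{C^{1/2}_*}$, are also covered) to require care but no new ideas, following \cite{MR3869381, MR3724757}.
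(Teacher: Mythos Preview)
Your proposal does not address the stated result. The statement in question is Lemma~\ref{t:Composition}, a technical lemma asserting that composition with a sufficiently regular diffeomorphism is bounded on $H^s$; the paper does not prove this lemma at all but simply recalls it from Alazard--Burq--Zuily \cite{MR2931520} (Lemma~3.2 there). There is therefore no ``paper's own proof'' to compare against, and in any case your write-up makes no attempt to establish the bound $\|F\circ\kappa\|_{H^s}\le \|\chi'\|_{L^\infty}\,C(\|\partial_x\kappa\|_{W^{p-1,\infty}})\|F\|_{H^s}$.

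What you have written is instead a proof strategy for Theorem~\ref{t:MainWellPosed}, the local well-posedness result. As such it is broadly aligned with the paper's Section~\ref{s:Cauchy} (regularize, get uniform lifespan from the energy and Strichartz estimates, contract in a weaker norm, pass to the limit, run a frequency-envelope/Bona--Smith argument for continuous dependence), and your identification of the $\tfrac54$ threshold with the match between the Strichartz gain and the $\CalB$ requirement is correct. But none of this is relevant to Lemma~\ref{t:Composition}. If the intent was to prove that lemma, the standard route is: the $s=0$ case is change of variables (producing the factor $\|\chi'\|_{L^\infty}$), the integer cases $s=1,\dots,p$ follow from the chain and Fa\`a di Bruno formulas together with the $W^{p-1,\infty}$ control on $\partial_x\kappa$, and intermediate $s$ follow by complex interpolation.
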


\subsection{Water waves related bounds}
In this subsection, we  compute estimates  for the various auxiliary functions including $a, \ub$, $Y$, and estimates for some functions involving the Jacobian $J$ using the paradifferential estimates in the previous subsection.
Some of the estimates are similar to those derived in \cite{ai2023dimensional}.

We first compute the estimates for the  frequency-shift $\ua$.
Recall that it is given by
\begin{equation*}
\ua = i (\bar{\nP}[\bar{R}R_\alpha] - \nP[R\bar{R}_\alpha]) + \frac{\gamma}{2}(R+ \bar{R}- \nP[W\bar{R}_\alpha - \bar{\W}R]- \bar{\nP}[\bar{W}R_\alpha - \W\bar{R}]). 
\end{equation*}
\begin{lemma}
The  frequency-shift $\ua$ satisfies the estimate
\begin{equation}
\|\ua\|_{C_{*}^{\frac{1}{2}}} \lesssim_\CalA \CalB, \label{uACHalf}
\end{equation}
as well as the Sobolev estimate
\begin{equation}
\| \ua\|_{H^s}  \lesssim \CalB \|(\W,R)\|_{\mathcal{H}^{s}}, \quad s>0. \label{UaHsEst}  
\end{equation}
\end{lemma}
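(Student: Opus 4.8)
The plan is to write $\ua=a+\tfrac{\gamma}{2}a_1$ and estimate the two pieces separately, expanding every bilinear expression as $uv=T_uv+T_vu+\Pi(u,v)$ and then reading off the regularity from the paraproduct estimates of Section~\ref{s:Norms} together with the definitions of $\CalA$ and $\CalB$. Note that each summand of $\CalA$ is dominated by a summand of $\CalB$ (recall $\epsilon$ is small, say $\epsilon\le\tfrac12$), so $\CalA\lesssim\CalB$; this lets us freely trade a factor $\CalA$ for $\CalB$, so that any product $\CalA\CalB$ or $\CalA^2$ arising along the way is of the form $\lesssim_{\CalA}\CalB$ as in \eqref{uACHalf}, and likewise $\CalA\|(\W,R)\|_{\mathcal{H}^s}\lesssim\CalB\|(\W,R)\|_{\mathcal{H}^s}$ as in \eqref{UaHsEst}.

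For the frequency-shift part $a=i(\bar{\nP}[\bar R R_\alpha]-\nP[R\bar R_\alpha])$, one first notes that $\nP[R\bar R_\alpha]=\overline{\bar{\nP}[\bar R R_\alpha]}$, so $a=-2\Im\bar{\nP}[\bar R R_\alpha]$ and it suffices to bound $\bar{\nP}[\bar R R_\alpha]$. Decompose $\bar R R_\alpha=T_{\bar R}R_\alpha+T_{R_\alpha}\bar R+\Pi(\bar R,R_\alpha)$. The crucial structural point is that $T_{\bar R}R_\alpha$ is holomorphic up to a smoothing remainder: since $\bar R$ is anti-holomorphic and $R_\alpha$ holomorphic, the output frequency of this low--high paraproduct is carried by $R_\alpha$ and lies in $(-\infty,0]$ modulo a bounded-frequency error; hence $\bar{\nP}T_{\bar R}R_\alpha$ is a harmless low-frequency term, bounded in $C^{1/2}_{*}$ by $\|R\|_{L^\infty}\|R_\alpha\|_{L^\infty}$ and in $H^s$ by $\|R\|_{L^\infty}\|R\|_{H^s}$, hence respectively by $\CalA\CalB$ and $\CalA\|(\W,R)\|_{\mathcal{H}^s}$. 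For the two surviving pieces one pairs the low-regularity factor $R_\alpha\in C^{-1/2}_{*}$ (controlled by $\CalA$) with the high-regularity factor $\bar R\in C^{1+\epsilon}_{*}$ (controlled by $\CalB$): by \eqref{CsCmStar}, $T_{R_\alpha}\bar R\in C^{1/2+\epsilon}_{*}$, and by \eqref{CCCEstimate}, $\Pi(\bar R,R_\alpha)\in C^{1/2+\epsilon}_{*}$, both of which embed into $C^{1/2}_{*}$; this gives the $a$-part of \eqref{uACHalf}. For \eqref{UaHsEst} the identical splitting works, now using \eqref{HsLinfty} for $T_{R_\alpha}\bar R$ (symbol $R_\alpha\in L^\infty$, argument $\bar R\in H^s$) and \eqref{HCHEstimate} for $\Pi(\bar R,R_\alpha)$ (with $\bar R\in C^{1+\epsilon}_{*}$ and $R_\alpha\in H^{s-1}$, so $\Pi\in H^{s+\epsilon}$); the balanced-paraproduct hypothesis $s+\epsilon>0$ holds since $s>0$.

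The auxiliary term $a_1=R+\bar R-N$ is treated in the same spirit. The linear contribution $R+\bar R$ is immediate: in $C^{1/2}_{*}$ it is exactly the summand $\gamma\|R\|_{C^{1/2}_{*}}$ of $\CalB$, and in $H^s$ it reduces to $\|R\|_{H^s}\le\|(\W,R)\|_{\mathcal{H}^s}$. For $N=\nP[W\bar R_\alpha-\bar{\W}R]+\bar{\nP}[\bar W R_\alpha-\W\bar R]$, the terms $\bar{\W}R$ and $\W\bar R$ cause no difficulty, since both factors have nonnegative H\"older regularity and the product estimates \eqref{CsProduct}, \eqref{HsProduct} apply directly, with $\gamma\bar{\W}\in C^{1/2}_{*}$ coming from $\CalA$ and $\gamma\bar{\W}\in C^{1+\epsilon}_{*}$ from $\CalB$. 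The delicate terms are $W\bar R_\alpha$ and $\bar W R_\alpha$, where a derivative lands on $R$; here one splits into paraproducts and uses that $W$ is holomorphic (resp.\ $\bar W$ anti-holomorphic), so the offending low--high paraproduct $T_W\bar R_\alpha$ (resp.\ $T_{\bar W}R_\alpha$) is annihilated by $\nP$ (resp.\ $\bar{\nP}$) up to a smoothing remainder exactly as for $a$, while the surviving low--high and balanced pieces are estimated by \eqref{CsCmStar}, \eqref{CCCEstimate} and \eqref{HsCmStar}, \eqref{HCHEstimate}, using the available pointwise and Sobolev bounds for the position $W$ (equivalently for $Y$) in terms of $\|W\|_{L^\infty}$, $\CalA$, $\CalB$, and $\|(\W,R)\|_{\mathcal{H}^s}$. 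Since $W$ always enters $N$ accompanied by a factor $\gamma$, the vorticity weights built into $\CalA$ and $\CalB$ are precisely what is needed.

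The main obstacle is making the ``smoothing remainder'' claim precise: with the smooth symbol cutoffs $\chi,\psi$ defining $T_a$ (as opposed to a sharp Littlewood--Paley decomposition) the product of a low-frequency factor with a high-frequency holomorphic factor is only essentially holomorphic, so one must quantify the low-frequency error. This is routine frequency-support bookkeeping --- or, alternatively, a standard bound for operators of the form $\bar{\nP}T_f\nP$ or $\nP T_f\bar{\nP}$ with $f$ (anti-)holomorphic --- and the error is dominated by $L^\infty$ norms alone, hence absorbed into the $\lesssim_{\CalA}$ constant. The remaining work --- verifying that every invocation of \eqref{CCCEstimate}, \eqref{CsCmStar}, \eqref{HCHEstimate}, \eqref{HsCmStar}, \eqref{HsProduct}, \eqref{CsProduct} meets its summability hypothesis --- is mechanical and succeeds precisely because of the strict gap $\epsilon>0$ present in $\CalA$ and $\CalB$.
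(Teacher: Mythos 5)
Your proof is correct and follows essentially the same approach as the paper: decompose each bilinear term into paraproducts, let the holomorphic projection annihilate the offending low--high piece (since its high-frequency output has the wrong sign of frequency), and bound the surviving low--high and balanced pieces via \eqref{CsCmStar}, \eqref{CCCEstimate}, \eqref{HsLinfty}, \eqref{HCHEstimate}, pairing an $\CalA$-controlled factor with a $\CalB$-controlled factor, with $\CalA\lesssim\CalB$ absorbing any leftover $\CalA^2$. The only cosmetic differences are that you take the complex conjugate and work with $\bar{\nP}[\bar R R_\alpha]$ where the paper works with $\nP[R\bar R_\alpha]$ directly, and that your flagged ``smoothing remainder'' in the cancellation $\bar{\nP}T_{\bar R}R_\alpha\approx0$ is a reasonable caution but in fact vanishes identically once the cutoff parameter in the definition of $T_a$ is chosen with $\epsilon_2<\tfrac14$, so the paper's identities are exact.
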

\begin{proof}
For the holomorphic terms, we write
\begin{align*}
&\nP[R\bar{R}_\alpha] = T_{\bar{R}_\alpha} R + \nP\Pi(\bar{R}_\alpha, R),\\ 
&\nP[W\bar{R}_\alpha - \bar{\W}R] = T_{\bar{R}_\alpha}W -T_{\bar{\W}}R + \nP\Pi(W, \bar{R}_\alpha)-\nP\Pi(\bar{\W}, R).
\end{align*}
Using \eqref{CCCEstimate} and \eqref{CsLInfty}, 
\begin{align*}
&\|T_{\bar{R}_\alpha} R \|_{C_{*}^{\frac{1}{2}}}+ \|\nP\Pi(\bar{R}_\alpha, R) \|_{C_{*}^{\frac{1}{2}}} \lesssim \|R \|_{C_{*}^{\frac{1}{2}}} \| R\|_{C^1_{*}} \lesssim_\CalA \CalB, \\
 \gamma &\|T_{\bar{R}_\alpha} W \|_{C_{*}^{\frac{1}{2}}}+ \gamma\|\nP\Pi(\bar{R}_\alpha, W) \|_{C_{*}^{\frac{1}{2}}} \lesssim \gamma \|W \|_{C_{*}^{\frac{1}{2}}} \| R\|_{C^1_{*}} + \gamma \|R\|_{C_{*}^{\frac{1}{2}}}\| W\|_{C_{*}^{1}} \lesssim_\CalA \CalB, \\
 \gamma &\|T_{\bar{\W}}R \|_{C_{*}^{\frac{1}{2}}}+ \gamma\|\nP\Pi(\bar{\W}, R) \|_{C_{*}^{\frac{1}{2}}} \lesssim \gamma \|W \|_{C_{*}^{\frac{1}{2}}} \| R\|_{C^1_{*}}\lesssim_\CalA \CalB.
\end{align*}
These give the bound \eqref{uACHalf} for the frequency-shift $\ua$.
For the Sobolev estimate, we use \eqref{HCHEstimate}, \eqref{HsCmStar}, \eqref{HsLinfty} and embeddings to estimate
\begin{align*}
&\|T_{\bar{R}_\alpha} R \|_{H^{s}}+ \|\nP\Pi(\bar{R}_\alpha, R) \|_{H^s} \lesssim \|R \|_{C_{*}^{1}} \| R\|_{H^s} \lesssim \CalB \| R\|_{H^s}, \\
 \gamma &\|T_{\bar{R}_\alpha} W \|_{H^s}+ \gamma\|\nP\Pi(\bar{R}_\alpha, W) \|_{H^s} \lesssim \gamma \|W \|_{H^{s+\frac{1}{2}}} \| R_\alpha\|_{C^{-\frac{1}{2}}_{*}} \lesssim \CalB \| \W\|_{H^{s+\frac{1}{2}}}, \\
 \gamma &\|T_{\bar{\W}}R \|_{H^s}+ \gamma\|\nP\Pi(\bar{\W}, R) \|_{H^s} \lesssim \gamma \|W \|_{C_{*}^{1+\epsilon}} \| R\|_{H^s}\lesssim \CalB \| R\|_{H^s}.    
\end{align*}
These give the Sobolev bound \eqref{UaHsEst} for the frequency-shift $\ua$.
\end{proof}

We continue with the advection velocity $\ub$.
From its definition, $\ub$ can be rewritten using $Y = \frac{\W}{1+\W}$,
\begin{equation*}
\ub = 2\Re R-\nP[R\bar{Y}] - \bar{\nP}[\bar{R}Y] +\gamma \Im W + i\frac{\gamma}{2}(\nP[W\bar{Y}]-\bar{\nP}[\bar{W}Y]).
\end{equation*}
The estimates for $\ub$ again follow in a similar spirit from \cite{ai2023dimensional}.
\begin{lemma}
The advection velocity $\ub$ satisfies the estimate
\begin{equation}
\| \ub\|_{C^1_{*}} \lesssim_\CalA \CalB, \label{UbCOneStar}
\end{equation}
as well as the Sobolev estimate
\begin{equation}
\| \ub\|_{H^s}  \lesssim_{\CalA} \|(\W,R)\|_{\mathcal{H}^{s}}, \quad s>0. \label{UbHsEst}  
\end{equation}
\end{lemma}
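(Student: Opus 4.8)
The plan is to argue exactly as in the proof of the preceding lemma for $\ua$, starting from the rewritten expression $\ub = 2\Re R - \nP[R\bar Y] - \bar\nP[\bar R Y] + \gamma\Im W + i\frac{\gamma}{2}(\nP[W\bar Y] - \bar\nP[\bar W Y])$ with $Y = \W/(1+\W)$, and using that $\nP,\bar\nP$ are order-zero Fourier multipliers, hence bounded on every $H^s$ and every $C^r_*$. The only non-elementary input is a set of bounds on $Y$: $\|Y\|_{L^\infty}\lesssim_\CalA 1$, $\|Y\|_{C^r_*}\lesssim_\CalA\|\W\|_{C^r_*}$ for $r>0$, and $\|Y\|_{H^s}\lesssim_\CalA\|\W\|_{H^s}$ for $s\geq 0$, which I would either quote from the $Y$-estimates of this subsection or derive on the spot from the Moser estimates \eqref{MoserTwo} and \eqref{MoserOne} applied to $F(z)=z/(1+z)$ truncated away from its pole — legitimate because the chord--arc lower bound $\inf_\alpha|1+\W|>0$ is controlled by $\CalA$. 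I also record $\|R\|_{L^\infty}\lesssim\|R\|_{C^{1/2}_*}\leq\CalA$ and the standing a priori control of the lower-order holomorphic primitive $W$ in terms of $\W$ (using that $W$ decays at infinity).

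For the Zygmund bound \eqref{UbCOneStar}, the two affine terms are bounded directly: $\|2\Re R\|_{C^1_*}\lesssim\|R\|_{C^{1+\epsilon}_*}\leq\CalB$, and $\gamma\|\Im W\|_{C^1_*}$ is controlled by $\gamma\|\W\|_{C^{1+\epsilon}_*}\leq\CalB$ together with the a priori bound on $W$. For each bilinear term I would peel off $\nP$ and use the product rule \eqref{CsProduct}, putting one factor in $L^\infty$ (controlled by $\CalA$) and the other in $C^1_*$; the latter is then controlled by $\CalB$ since $\|R\|_{C^1_*}\leq\|R\|_{C^{1+\epsilon}_*}\leq\CalB$ and $\|Y\|_{C^1_*}\lesssim_\CalA\|\W\|_{C^{3/2}_*}\leq\CalB$, and in $\gamma\nP[W\bar Y]$ the $\gamma$ is carried on $\W$ via $\gamma\|Y\|_{C^1_*}\lesssim_\CalA\gamma\|\W\|_{C^{3/2}_*}\leq\CalB$ while the remaining $W$ only needs an $L^\infty$-type bound. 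If one prefers to see the structure term by term, split each product as $T+T+\Pi$ and invoke \eqref{CsLInfty} and \eqref{CCCEstimate}: in every piece the factor carrying the top regularity lands in a $\CalB$-norm and its companion needs only an $L^\infty$ or $C^{1/2}_*$ norm from $\CalA$.

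For the Sobolev bound \eqref{UbHsEst} the key observation is that $\ub$ is of degree one in $(\W,R)$, so no second copy of $\CalB$ is needed. The affine terms contribute $\|R\|_{H^s}$ and $\gamma\|W\|_{H^s}$, both $\lesssim\|(\W,R)\|_{\mathcal{H}^s}$. For the bilinear terms I would use \eqref{HsProduct} after removing $\nP$: the term with $R$ and $\bar Y$ is $\lesssim\|R\|_{H^s}\|Y\|_{L^\infty}+\|R\|_{L^\infty}\|Y\|_{H^s}\lesssim_\CalA\|R\|_{H^s}+\|\W\|_{H^s}\lesssim_\CalA\|(\W,R)\|_{\mathcal{H}^s}$, and the term with $W$ and $\bar Y$ is $\lesssim_\CalA\gamma\big(\|W\|_{H^s}+\|W\|_{L^\infty}\|\W\|_{H^s}\big)\lesssim_\CalA\|(\W,R)\|_{\mathcal{H}^s}$, again using $\|Y\|_{H^s}\lesssim_\CalA\|\W\|_{H^s}$ and the control of $W$. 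A cleaner handling of the $W$-pieces uses the $T+T+\Pi$ split with \eqref{HsLinfty}, \eqref{HsCmStar} and \eqref{HCHEstimate}, keeping $\gamma$ next to $\W$ throughout, e.g.\ $\gamma\|\Pi(\bar Y,W)\|_{H^s}\lesssim\gamma\|Y\|_{C^{1/2}_*}\|W\|_{H^{s-1/2}}\lesssim_\CalA\|(\W,R)\|_{\mathcal{H}^s}$.

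As with $\ua$, the only genuine subtleties are the nonlinear dependence on $\W$ through $Y=\W/(1+\W)$, which is what forces the Jacobian lower bound into the constants via the Moser estimates, and the bookkeeping of the vorticity $\gamma$ in the terms carrying $W$ rather than $\W$: these must be absorbed using the a priori control of $W$ and the fact that $\gamma$ is a fixed parameter, so that the Sobolev estimate still costs only $\CalA$ and not $\CalB$. Everything else is a routine application of the paraproduct calculus recalled in Section~\ref{ss:para-estimates}.
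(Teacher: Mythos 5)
Your argument follows the paper's proof essentially step for step: the same rewritten form of $\ub$ in terms of $Y$, the same paraproduct peeling with the $T_R\bar Y$ piece killed by $\nP$, the same Moser-type bounds for $Y=F(\W)$, and the same use of \eqref{CCCEstimate}/\eqref{CsLInfty} for the Zygmund bound and \eqref{HsProduct} or the $T+\Pi$ split for the Sobolev bound. The only slip is the intermediate inequality $\gamma\|\W\|_{C^{3/2}_*}\le\CalB$, which need not hold since $\gamma\|\W\|_{C^{3/2}_*}$ is not one of the four components of $\CalB$; replace it with $\gamma\|Y\|_{C^1_*}\lesssim_\CalA\gamma\|\W\|_{C^{1+\epsilon}_*}\le\CalB$ (the component that actually appears in $\CalB$), exactly as you already do when bounding the $\gamma\Im W$ term.
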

\begin{proof}
The bounds for $\Re R$ and $\gamma\Im W$ are obvious, it suffices to estimate $\nP[R\bar{Y}]$ and $i\frac{\gamma}{2}\nP[W\bar{Y}]$ terms.
For these two terms, 
\begin{equation*}
\nP[R\bar{Y}] = T_{\bar{Y}}R + \nP \Pi(R,\bar{Y}), \quad \nP[W\bar{Y}] = T_{\bar{Y}}W + \nP \Pi(W,\bar{Y}). 
\end{equation*}
Using the estimates \eqref{CCCEstimate} and \eqref{CsLInfty},
\begin{align*}
&\|\nP[R\bar{Y}] \|_{C^1_{*}} \leq \|T_{\bar{Y}}R \|_{C^1_{*}} + \|\nP \Pi(R,\bar{Y}) \|_{C^1_{*}} \lesssim \|Y\|_{L^\infty}\|R\|_{C^1_{*}}  \lesssim_\CalA \CalB,\\
\gamma&\|\nP[W\bar{Y}] \|_{C^1_{*}} \leq \gamma\|T_{\bar{Y}}W \|_{C^1_{*}} + \gamma\|\nP \Pi(W,\bar{Y}) \|_{C^1_{*}} \lesssim \gamma \|Y\|_{C^0_{*}}\|W\|_{C^1_{*}}  \lesssim_\CalA \CalB,
\end{align*}
which gives the $C^1_{*}$ estimate \eqref{UbCOneStar}.
For the Sobolev estimate, we use \eqref{HsLinfty} and \eqref{HCHEstimate},
\begin{align*}
&\|\nP[R\bar{Y}] \|_{H^s} \leq \|T_{\bar{Y}}R \|_{H^s} + \|\nP \Pi(R,\bar{Y}) \|_{H^s} \lesssim \|Y\|_{L^\infty\cap C^0_{*}}\|R\|_{H^s}  \lesssim_\CalA \|R\|_{H^s},\\
\gamma&\|\nP[W\bar{Y}] \|_{H^s} \leq \gamma\|T_{\bar{Y}}W \|_{H^s} + \gamma\|\nP \Pi(W,\bar{Y}) \|_{H^s} \lesssim \gamma \|Y\|_{L^\infty\cap C^0_{*}}\|W\|_{H^s}  \lesssim_\CalA  \|W\|_{H^s},
\end{align*}
which gives the Sobolev bound \eqref{UbHsEst}.
\end{proof}

As for the auxiliary function $Y$, one has the following estimates, that are akin to the corresponding estimates in \cite{ai2023dimensional}:
\begin{lemma}
For $s>0$, the auxiliary function $Y = \frac{\W}{1+\W}$ satisfies
\begin{equation}
\|Y\|_{H^s} \lesssim_\CalA \|\W\|_{H^s}, \quad \|Y\|_{C^s_{*}} \lesssim_\CalA \|\W\|_{C^s_{*}}. \label{YMoser}
\end{equation}
Moreover, one can write
\begin{equation}
    Y = T_{(1-Y)^2}\W + E, \label{YWExpression}
\end{equation}
where the error $E$ satisfies the bound
\begin{equation*}
\|E\|_{H^s} \lesssim_\CalA \CalB \|\W\|_{H^{s-1}}.
\end{equation*}
\end{lemma}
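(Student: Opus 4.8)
The plan is to treat $Y$ as a smooth nonlinear function of $\W$, namely $Y = G(\W)$ with $G(z) = \frac{z}{1+z}$ satisfying $G(0)=0$ and $G'(z) = \frac{1}{(1+z)^2} = (1-Y)^2$. First I would establish the Moser-type bounds \eqref{YMoser} directly from \eqref{MoserOne} and \eqref{MoserTwo}: since $G$ is smooth near the range of $\W$ (the denominator $1+\W$ is bounded away from zero under the control norm $\CalA$, because $\|\W\|_{C^{1+\epsilon}_*}$ being controlled forces $J = |1+\W|^2$ to stay uniformly positive, which is implicitly part of the working hypotheses), we get $\|Y\|_{H^s} = \|G(\W)\|_{H^s} \lesssim \mathcal{F}(\|\W\|_{L^\infty})\|\W\|_{H^s} \lesssim_\CalA \|\W\|_{H^s}$, and likewise for the Zygmund norm. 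The constant depends only on $\|\W\|_{L^\infty} \lesssim \CalA$, so the subscript $\CalA$ in $\lesssim_\CalA$ is justified.

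For the paralinearization identity \eqref{YWExpression}, I would apply Lemma \ref{t:Paralinear} (paralinearization) with $F = G$, $u = \W$, the Sobolev exponent taken to be $s-1$, and the Hölder/Zygmund exponent $\rho$ taken to be $1$ (or $1+\epsilon$, using that $\CalB$ controls $\|\W\|_{C^{3/2}_*}$ and hence $\|\W\|_{C^{1+\epsilon}_*}$). This gives
\begin{equation*}
\|G(\W) - G(0) - T_{G'(\W)}\W\|_{H^{(s-1)+1}} \leq C(\|\W\|_{L^\infty})\,\|\W\|_{C^{1}_*}\,\|\W\|_{H^{s-1}}.
\end{equation*}
Since $G(0)=0$ and $G'(\W) = (1-Y)^2$, the left side is exactly $\|Y - T_{(1-Y)^2}\W\|_{H^s} = \|E\|_{H^s}$, and the right side is bounded by $C(\|\W\|_{L^\infty})\,\CalB\,\|\W\|_{H^{s-1}} \lesssim_\CalA \CalB\,\|\W\|_{H^{s-1}}$, using $\|\W\|_{C^1_*} \lesssim \|\W\|_{C^{3/2}_*} \lesssim \CalB$. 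This is precisely the claimed bound on $E$.

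One technical point to check is that Lemma \ref{t:Paralinear} is stated for $u \in H^s \cap C^\rho_*$ with $s,\rho>0$; here I am using it with $H^{s-1}$ regularity, which requires $s>1$ — but that is consistent with the regularity range in which the whole estimate is used (the control norm $\CalB$ already presupposes $\W \in C^{3/2}_*$), and if one wants the statement for all $s>0$ as written, one can instead invoke the standard refinement that $\|F(u) - F(0) - T_{F'(u)}u\|_{H^{s+\rho}} \lesssim C(\|u\|_{L^\infty})\|u\|_{C^\rho_*}\|u\|_{H^s}$ holds for $s \in \R$ once $u \in C^\rho_*$, which follows from the paraproduct decomposition $F(u) = T_{F'(u)}u + T_u F'(u) + \Pi(u,F'(u))$ together with \eqref{HCHEstimate} and the Moser estimate \eqref{MoserTwo} applied to $F'$. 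I do not expect any genuine obstacle here; the only mild subtlety is bookkeeping the loss of one derivative (the error $E$ lives in $H^s$ while $\W$ is only measured in $H^{s-1}$), which is exactly what makes the remainder term "lower order" and hence useful — this is the gain $\rho=1$ in the paralinearization lemma, and everything else is a routine application of the estimates already collected in this section.
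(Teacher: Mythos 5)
Your Moser bounds \eqref{YMoser} are exactly what the paper does, so that part is fine. For \eqref{YWExpression}, you take a genuinely different route: a direct appeal to the paralinearization Lemma \ref{t:Paralinear} with Sobolev exponent $s-1$ and $\rho=1$, which immediately gives $\|Y - T_{(1-Y)^2}\W\|_{H^s}\lesssim_\CalA \CalB\|\W\|_{H^{s-1}}$. That is cleaner than the paper's argument, but it only applies under the hypothesis $s-1>0$ of Lemma \ref{t:Paralinear}, i.e.\ $s>1$, whereas the lemma claims $s>0$. The paper avoids Lemma \ref{t:Paralinear} entirely: it exploits the exact algebraic identity $Y=(1-Y)\W$ together with $(1+\W)(1-Y)=1$, writes out the Bony decomposition of the product $(1-Y)\W$, rearranges to isolate $Y-T_{(1-Y)^2}\W$, and controls the three resulting pieces with the composition estimate \eqref{CompositionPara} (at $\rho=1$) and the balanced-paraproduct estimate \eqref{HCHEstimate}. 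Since \eqref{CompositionPara} and \eqref{HCHEstimate} are valid for arbitrary Sobolev index (one only needs $1+(s-1)>0$, i.e.\ $s>0$), this works on the whole stated range.

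The real problem is in the ``fix'' you offer for $s\le 1$: the decomposition you invoke, $F(u)=T_{F'(u)}u+T_uF'(u)+\Pi(u,F'(u))$, is not an identity. The right-hand side is the Bony decomposition of the product $u\,F'(u)$, so the claimed identity would force $F(u)=uF'(u)$, which holds only for $F$ affine; for the present $G(z)=z/(1+z)$ we have $zG'(z)=z/(1+z)^2\neq G(z)$. So the argument you propose to extend the range to $s>0$ does not go through as written. If you want to avoid the black-box paralinearization lemma, the correct starting point is exactly the product identity $Y=(1-Y)\W$: its paraproduct expansion produces $T_{1-Y}\W$, not $T_{(1-Y)^2}\W$, and the extra factor of $1-Y$ then has to be generated by the composition formula acting on $T_{1-Y}T_{1+\W}$ — this is precisely the chain of identities in the paper's proof.
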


\begin{proof}
\eqref{YMoser} is a direct consequence of the Moser type inequalities \eqref{MoserOne} and \eqref{MoserTwo}.
We write $Y$ in  the paraproduct expansion,
\begin{equation*}
    Y = (1-Y)\W = T_{1-Y}\W -T_{\W}Y -\Pi(\W, Y),
\end{equation*}
so that 
\begin{equation*}
T_{1+\W}Y = T_{1-Y}\W -\Pi(\W, Y).
\end{equation*}
Using the fact that $(1+\W)(1-Y) = 1$, one can write
\begin{equation*}
Y - T_{(1-Y)^2}\W = (T_{(1-Y)(1+\W)}-T_{1-Y}T_{1+\W})\W + (T_{1-Y}T_{1-Y}-T_{(1-Y)^2})\W - T_{1-Y}\Pi(\W, Y).
\end{equation*}
If suffices to show that the right-hand side of above equation may be put into the error $E$.
This is due to the composition formula \eqref{CompositionPara} with $\rho =1$ and \eqref{HCHEstimate},
\begin{align*}
 &\|(T_{(1-Y)(1+\W)}-T_{1-Y}T_{1+\W})\W \|_{H^s} \lesssim_\CalA \CalB \|\W\|_{H^{s-1}}, \\
 &\|(T_{1-Y}T_{1-Y}-T_{(1-Y)^2})\W \|_{H^s}\lesssim_\CalA \CalB \|\W\|_{H^{s-1}}, \\
 &\|T_{1-Y}\Pi(\W, Y)\|_{H^s} \leq (1+\|Y\|_{L^\infty})\|\Pi(\W, Y) \|_{H^s} \lesssim_\CalA \|Y\|_{C^1_{*}}\|\W\|_{H^{s-1}}.
\end{align*}
Hence, they can be put into the error term $E$, and we get the expression \eqref{YWExpression}.
\end{proof}

Finally, we compute some paralinearization results involving the Jacobian $J= (1+\W)(1+\bar{\W})$.
\begin{lemma}
Let $s>0$, we have the following paralinearization results:
\begin{align}
&\nP(J^{-\frac{1}{2}}-1) = -\frac{1}{2}T_{(1-Y)J^{-\frac{1}{2}}} \W + E_1,  \label{OneParaJ}\\
&\nP(J^{-\frac{1}{2}}(1-Y)-1) = -\frac{3}{2}T_{(1-Y)^2J^{-\frac{1}{2}}}\W + E_2,\label{TwoParaJ}
\end{align}
where the remainder terms satisfy the bounds
\begin{equation*}
 \|E_1 \|_{H^{s+\frac{3}{2}}(\mathbb{R})}+  \|E_2 \|_{H^{s+\frac{3}{2}}(\mathbb{R})}\lesssim_\CalA \| \W\|_{C^{\frac{3}{2}}_{*}} \|\W\|_{H^s(\mathbb{R})}, \quad  \|E_1\|_{C^{\frac{3}{2}}_{*}}+ \|E_2\|_{C^{\frac{3}{2}}_{*}} \lesssim_\CalA \|\W\|_{C^{\frac{3}{2}}_{*}}. 
\end{equation*}
\end{lemma}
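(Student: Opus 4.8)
The plan is to reduce both identities to a single \emph{holomorphic paralinearization} computation. First I would write $J^{-\frac12}=(1+\W)^{-\frac12}(1+\bar{\W})^{-\frac12}$ and introduce $g:=(1+\W)^{-\frac12}-1$ and $\tilde g:=(1+\W)^{-\frac32}-1$, which are holomorphic functions of the holomorphic $\W$ vanishing at $0$ (well defined since $\inf J>0$), together with $\bar h:=(1+\bar{\W})^{-\frac12}-1$, which is anti-holomorphic. Then $J^{-\frac12}-1=(1+g)(1+\bar h)-1=g+\bar h+g\bar h$ and $J^{-\frac12}(1-Y)-1=(1+\tilde g)(1+\bar h)-1=\tilde g+\bar h+\tilde g\bar h$, so applying $\nP$ — which fixes holomorphic functions and annihilates anti-holomorphic ones — collapses these to $\nP(J^{-\frac12}-1)=g+\nP(g\bar h)$ and $\nP(J^{-\frac12}(1-Y)-1)=\tilde g+\nP(\tilde g\bar h)$. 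It then suffices to treat the first; the second is verbatim with $g$ replaced by $\tilde g$.

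Next I would paralinearize. Lemma~\ref{t:Paralinear} with $\rho=\tfrac32$ gives $g=T_{g'(\W)}\W+R_g$ with $g'(\W)=-\tfrac12(1+\W)^{-\frac32}$ and $\|R_g\|_{H^{s+\frac32}}\lesssim_\CalA\|\W\|_{C^{\frac32}_*}\|\W\|_{H^s}$ (the constant controlled through $\|\W\|_{L^\infty}\lesssim\CalA$). For the quadratic term I would split $g\bar h=T_g\bar h+T_{\bar h}g+\Pi(g,\bar h)$. The key structural point is that, because $\bar h$ sits in the high-frequency slot of $T_g\bar h$ and $g$ in the high-frequency slot of $T_{\bar h}g$, $\nP$ annihilates $T_g\bar h$ and fixes $T_{\bar h}g$ (exactly, for a suitable choice of the spectral cut-offs in the definition of $T_a$, and up to a harmless low-frequency smoothing remainder otherwise), while the balanced piece obeys $\|\nP\Pi(g,\bar h)\|_{H^{s+\frac32}}\le\|\Pi(g,\bar h)\|_{H^{s+\frac32}}\lesssim\|g\|_{C^{\frac32}_*}\|\bar h\|_{H^s}\lesssim_\CalA\|\W\|_{C^{\frac32}_*}\|\W\|_{H^s}$ by \eqref{HCHEstimate} and the Moser estimates \eqref{MoserOne}--\eqref{MoserTwo}. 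Inserting the paralinearization of $g$ into $T_{\bar h}g$, I would control $T_{\bar h}R_g$ by \eqref{HsLinfty} and $T_{\bar h}T_{g'(\W)}\W$ by the symbolic calculus \eqref{CompositionPara} with $\rho=\tfrac32$ — noting that $\bar h$ and $g'(\W)$ do not depend on $\xi$, so all sub-principal terms vanish, $\bar h\sharp g'(\W)=\bar h\,g'(\W)$, and $T_{\bar h}T_{g'(\W)}-T_{\bar h g'(\W)}$ is of order $-\tfrac32$. Collecting all remainders into $E_1$ I then get $\nP(J^{-\frac12}-1)=T_{(1+\bar h)g'(\W)}\W+E_1$, and the computation $(1+\bar h)g'(\W)=(1+\bar{\W})^{-\frac12}\cdot\bigl(-\tfrac12(1+\W)^{-\frac32}\bigr)=-\tfrac12(1+\W)^{-1}J^{-\frac12}=-\tfrac12(1-Y)J^{-\frac12}$ yields \eqref{OneParaJ}; the same computation with $\tilde g'(\W)=-\tfrac32(1+\W)^{-\frac52}$ produces the symbol $-\tfrac32(1-Y)^2J^{-\frac12}$ in \eqref{TwoParaJ}.

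For the Hölder bounds I would argue directly from $E_i=\nP(\,\cdot\,)-(\text{main term})$: $\nP$ is bounded on $C^{\frac32}_*$, the Moser estimate \eqref{MoserTwo} gives $\|J^{-\frac12}-1\|_{C^{\frac32}_*}+\|J^{-\frac12}(1-Y)-1\|_{C^{\frac32}_*}\lesssim_\CalA\|\W\|_{C^{\frac32}_*}$, and \eqref{CsLInfty} bounds $T_{(1-Y)J^{-\frac12}}\W$ and $T_{(1-Y)^2J^{-\frac12}}\W$ in $C^{\frac32}_*$ by $\lesssim_\CalA\|\W\|_{C^{\frac32}_*}$, so no derivative is lost there. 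The step I expect to be the main obstacle — indeed the only non-bookkeeping step — is the interaction of $\nP$ with the paraproducts built on the anti-holomorphic factor $\bar h$: one must verify carefully (through the Fourier supports in the definition of $T_a$) that $\nP$ genuinely kills $T_g\bar h$, equivalently that in a direct paralinearization of $F(\W,\bar{\W})$ the $\bar{\W}$-derivative paraproduct does not survive $\nP$, leaving only the holomorphic paraproduct $T_{\bar h}g$ together with balanced and smoothing remainders. Everything else is a routine application of the paradifferential estimates recalled in Section~\ref{s:Norms}.
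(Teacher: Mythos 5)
Your proposal is correct, and it arrives at the same conclusion by a route that is related to but genuinely distinct from the paper's. The paper invokes the \emph{multivariable} version of the paralinearization Lemma~\ref{t:Paralinear} directly on $F(\W,\bar\W)=(1+\W)^{-\frac12}(1+\bar\W)^{-\frac12}-1$, obtaining in one step
\[
J^{-\frac12}-1=-\tfrac12 T_{(1-Y)J^{-\frac12}}\W-\tfrac12 T_{(1-\bar Y)J^{-\frac12}}\bar\W+E_3,
\]
and then applies $\nP$, noting that $T_{(1-\bar Y)J^{-\frac12}}\bar\W$ has Fourier support in the positive frequencies and is therefore annihilated; the Hölder bound for $E_1$ is obtained afterwards by bounding each surviving term in $C^{\frac32}_*$ separately via Moser and \eqref{CsLInfty}, exactly as you do. You instead exploit the product structure $J^{-\frac12}=(1+g)(1+\bar h)$, apply only the \emph{single-variable} Lemma~\ref{t:Paralinear} to the holomorphic factor $g$, and then rebuild the two-variable paralinearization by hand through the paraproduct decomposition of $g\bar h$ and the composition calculus \eqref{CompositionPara} for $T_{\bar h}T_{g'(\W)}$. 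This is more work but is entirely self-contained and makes visible why only the $\W$-derivative coefficient survives. The step you flag as the potential obstacle — that $\nP$ kills $T_g\bar h$ and fixes $T_{\bar h}g$ — is precisely the same frequency-support fact the paper uses to drop the $\bar\W$-paraproduct, and it is indeed exact for the cut-off parameter $\epsilon_2$ in the definition of $T_a$ chosen small enough (the low-frequency correction, if one does not make that choice, is infinitely smoothing and harmless). Your algebra $(1+\bar h)g'(\W)=-\tfrac12(1-Y)J^{-\frac12}$ and $(1+\bar h)\tilde g'(\W)=-\tfrac32(1-Y)^2J^{-\frac12}$ matches the symbols in \eqref{OneParaJ}--\eqref{TwoParaJ}, and your separate $C^{\frac32}_*$ argument is the same as the paper's. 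In short: same key observation, different bookkeeping — the paper delegates to the multivariable paralinearization lemma, you re-derive that lemma's conclusion for this specific product.
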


\begin{proof}
We write 
\begin{equation*}
J^{-\frac{1}{2}}-1 = F(\W, \bar{\W}), \quad \text{where } F(a,b) = (1+a)^{-\frac{1}{2}}(1+b)^{-\frac{1}{2}} - 1.
\end{equation*}
Direct computation gives
\begin{equation*}
\frac{\partial F}{\partial a}(a, b) = -\frac{1}{2}(1+a)^{-\frac{3}{2}}(1+b)^{-\frac{1}{2}}, \quad \frac{\partial F}{\partial b}(a, b) = -\frac{1}{2}(1+a)^{-\frac{1}{2}}(1+b)^{-\frac{3}{2}}.
\end{equation*}
Using the paralinearization result Lemma \ref{t:Paralinear}, we get 
\begin{equation*}
J^{-\frac{1}{2}}-1 = -\frac{1}{2}T_{(1-Y)J^{-\frac{1}{2}}} \W - \frac{1}{2} T_{(1-\bar{Y})J^{-\frac{1}{2}}} \bar{\W} + E_3, \quad \|E_3\|_{H^{s+\frac{3}{2}}(\mathbb{R})} \lesssim_\CalA \| \W\|_{C^{\frac{3}{2}}_{*}} \|\W\|_{H^s(\mathbb{R})}.
\end{equation*}
The term $-\frac{1}{2}T_{(1-\bar{Y})J^{-\frac{1}{2}}} \bar{\W}$ is anti-holomorphic, and vanishes after applying the projection $\nP$.
In addition, using Moser type inequality \eqref{MoserTwo},
\begin{equation*}
    E_1 = \nP(J^{-\frac{1}{2}}-1) +\frac{1}{2}T_{(1-Y)J^{-\frac{1}{2}}} \W
\end{equation*}
satisfies
\begin{equation*}
 \|E_1\|_{C^{\frac{3}{2}}_{*}} \lesssim_\CalA \|\W\|_{C^{\frac{3}{2}}_{*}}.  
\end{equation*}
Hence we get the paralinearization \eqref{OneParaJ}.

Similarly, one can write
\begin{equation*}
J^{-\frac{1}{2}}(1-\W)-1 = G(\W,\bar{\W}), \quad  \text{where } G(a,b) = (1+a)^{-\frac{3}{2}}(1+b)^{-\frac{1}{2}} -1.   
\end{equation*}
The two partial derivatives of $G(a,b)$ are given by
\begin{equation*}
\frac{\partial G}{\partial a}(a, b) = -\frac{3}{2}(1+a)^{-\frac{5}{2}}(1+b)^{-\frac{1}{2}}, \quad \frac{\partial G}{\partial b}(a, b) = -\frac{1}{2}(1+a)^{-\frac{3}{2}}(1+b)^{-\frac{3}{2}}.   
\end{equation*}
Using Lemma \ref{t:Paralinear}, one writes
\begin{equation*}
J^{\frac{1}{2}}(1+\W)-1 = -\frac{3}{2}T_{(1-Y)^2J^{-\frac{1}{2}}}\W - \frac{1}{2}T_{ J^{-\frac{3}{2}}}\bar{\W} + E_4, \quad \|E_4\|_{H^{s+\frac{3}{2}}(\mathbb{R})} \lesssim_\CalA \| \W\|_{C^{\frac{3}{2}}_{*}} \|\W\|_{H^s(\mathbb{R})}.
\end{equation*}
The term $- \frac{1}{2}T_{ J^{-\frac{3}{2}}}\bar{\W}$ is anti-holomorphic, and vanishes after applying the projection $\nP$.
Furthermore, using Moser type inequality \eqref{MoserTwo},
\begin{equation*}
    E_2 = \nP(J^{-\frac{1}{2}}(1-Y)-1) +\frac{3}{2}T_{(1-Y)^2J^{-\frac{1}{2}}}\W
\end{equation*}
satisfies
\begin{equation*}
 \|E_2\|_{C^{\frac{3}{2}}_{*}} \lesssim_\CalA \|\W\|_{C^{\frac{3}{2}}_{*}}.
\end{equation*}
This gives the second equation \eqref{TwoParaJ}.
\end{proof}

\section{Paralinearization and reduction of the water wave system}\label{s:reduction}
In this section, we first rewrite the water wave system \eqref{e:WR} as a system of paradifferential equations.
Next, we use a change of unknown called \textit{Wahl\'{e}n variables} to rewrite the paradifferential system.
Finally, we symmetrize the water wave system.

\subsection{Paralinearization of the water wave system}
In this subsection, we use the paradifferential estimates and bounds for auxiliary functions in Section \ref{s:Norms} to paralinearize the water wave system \eqref{e:WR}.

Using auxiliary functions $\ua$ and $\ub$, the first equation of \eqref{e:CVWW} can be rewritten as
\begin{equation}
 W_t + \ub(1+ W_\alpha) + i\frac{\gamma}{2}W = \bar{R} + i\frac{\gamma}{2}\bar{W}. \label{WEqn} 
\end{equation}
To begin with, we obtain a paradifferential equation for $W$ using \eqref{WEqn} that is similar to the equation for $W$ in \cite{ai2023dimensional}.
We remark that although the unknown $W$ does not appear directly in the differentiated system \eqref{e:WR}, we still need to paralinearize the equation \eqref{WEqn} because it will play a role in Wahl\'{e}n variables later. 
\begin{lemma} 
Let $s>0$, then we have the following results.
\begin{enumerate}
\item The unknown $W$ solves the paradifferential equation
\begin{equation}
  W_t + T_\ub \partial_\alpha W = -T_{1+W_\alpha} \nP[(1-\bar{Y})R]+ i\frac{\gamma}{2}T_{1+W_\alpha} \nP[W(1-\bar{Y})]-\nP\Pi(W_\alpha, \ub)-i\frac{\gamma}{2}W. \label{ParaW}
\end{equation}
\item The unknown $W$ satisfies the paradifferential equation
\begin{equation}
       W_t + T_\ub \partial_\alpha W +T_{1+\W}T_{1-\bar{Y}}R = G,  \label{WParaMaterial}
\end{equation}
where the source term $G$  satisfies the Sobolev bound
\begin{equation*}
    \|G\|_{H^s} \lesssim_\CalA \CalB \|(\W,R)\|_{\mathcal{H}^{s-\frac{3}{2}}}.
\end{equation*}
\item Furthermore, the equation \eqref{WParaMaterial} can be  reduced to
\begin{equation}
       W_t + T_\ub \partial_\alpha W +R =  \tilde{G},  \label{WParaTwo}
\end{equation}
where the source term $\tilde{G}$ satisfies the Sobolev bound
\begin{equation*}
 \|\tilde{G}\|_{H^{s}} \lesssim_\CalA \CalB \|(\W,R)\|_{\mathcal{H}^{s}}.    
\end{equation*}
\end{enumerate}
\end{lemma}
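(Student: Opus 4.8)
The plan is to paralinearize the evolution equation~\eqref{WEqn} and then perform two successive simplifications, in each step moving the lower order leftovers into the source term and estimating them by the paradifferential bounds of Section~\ref{s:Norms} together with the auxiliary function bounds derived there. For part~(1): since $W$ is holomorphic, applying $\nP$ to~\eqref{WEqn} annihilates the anti-holomorphic right-hand side $\bar R+i\frac{\gamma}{2}\bar W$ (up to a harmless constant) and fixes the left-hand side, so the equation becomes $W_t+i\frac{\gamma}{2}W+\nP[\ub(1+\W)]=0$. I then expand $\ub(1+\W)=T_{\ub}\partial_\alpha W+T_{1+\W}\ub+\Pi(\ub,\W)$, using $T_{\ub}1=0$ and discarding $\Pi(\ub,1)$ as a harmless low-frequency remainder. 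The piece $T_{\ub}\partial_\alpha W$ is essentially holomorphic, so $\nP T_{\ub}\partial_\alpha W=T_{\ub}\partial_\alpha W$ modulo low frequencies; for the middle piece, holomorphy of $1+\W$ gives $\nP T_{1+\W}\ub=T_{1+\W}\nP\ub$ modulo smoothing, and from the definition of $\ub$ together with $\frac{Q_\alpha}{J}=R(1-\bar Y)$, $\frac{W}{1+\bar{\W}}=W(1-\bar Y)$ and $\nP W=W$ one computes $\nP\ub=\nP[(1-\bar Y)R]-i\frac{\gamma}{2}\nP[W(1-\bar Y)]$. Collecting the three pieces gives~\eqref{ParaW}.

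For part~(2) I pass from~\eqref{ParaW} to~\eqref{WParaMaterial} by replacing $\nP[(1-\bar Y)R]$ with $T_{1-\bar Y}R$ and declaring the rest to be $G$. Using the paraproduct decomposition of $(1-\bar Y)R$, the discrepancy $T_{1+\W}\big(T_{1-\bar Y}R-\nP[(1-\bar Y)R]\big)$ becomes $T_{1+\W}\bar{\nP}T_{1-\bar Y}R-T_{1+\W}\nP T_R(1-\bar Y)-T_{1+\W}\nP\Pi(1-\bar Y,R)$. Since $1-\bar Y=(1+\bar{\W})^{-1}$ is anti-holomorphic, the first two terms are very-low-frequency remainders, controlled using $\|\bar Y\|_{L^\infty}\lesssim_\CalA\|\W\|_{L^\infty}$ and the embedding $C^{3/2}_{*}\hookrightarrow C^{1+\epsilon}_{*}$; the balanced term equals $-T_{1+\W}\nP\Pi(\bar Y,R)$ modulo smoothing, and~\eqref{HCHEstimate} combined with the H\"older bound in~\eqref{YMoser} gives $\|\Pi(\bar Y,R)\|_{H^s}\lesssim_\CalA\|\W\|_{C^{3/2}_{*}}\|R\|_{H^{s-3/2}}$. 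The remaining terms of~\eqref{ParaW}, namely $i\frac{\gamma}{2}T_{1+\W}\nP[W(1-\bar Y)]$, $\nP\Pi(\W,\ub)$ and $i\frac{\gamma}{2}W$, go into $G$ as well: for instance $\|\nP\Pi(\W,\ub)\|_{H^s}\lesssim\|\ub\|_{C^1_{*}}\|\W\|_{H^{s-1}}\lesssim_\CalA\CalB\|\W\|_{H^{s-1}}$ by~\eqref{HCHEstimate} and~\eqref{UbCOneStar}, and each $\gamma$-weighted term absorbs its vorticity factor into the $\gamma$-weighted part of $\CalB$.

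For part~(3) I pass from~\eqref{WParaMaterial} to~\eqref{WParaTwo} by replacing $T_{1+\W}T_{1-\bar Y}R$ with $R$. By the symbolic calculus~\eqref{CompositionPara} with $\rho=1$ (legitimate since $1+\W$ and $1-\bar Y$ lie in $W^{1,\infty}$), $T_{1+\W}T_{1-\bar Y}=T_{(1+\W)(1-\bar Y)}$ modulo an operator of order $-1$; since $(1+\W)(1-\bar Y)=1+\frac{\W-\bar{\W}}{1+\bar{\W}}$, this equals $R+T_{(\W-\bar{\W})/(1+\bar{\W})}R$ modulo smoothing, and the extra paraproduct obeys $\|T_{(\W-\bar{\W})/(1+\bar{\W})}R\|_{H^s}\lesssim_\CalA\|\W\|_{L^\infty}\|R\|_{H^s}\lesssim_\CalA\CalB\|(\W,R)\|_{\mathcal{H}^s}$, again via $C^{3/2}_{*}\hookrightarrow C^{1+\epsilon}_{*}$; the order $-1$ remainder and $G$ (already controlled at the lower level $\mathcal{H}^{s-3/2}$) are likewise $\lesssim_\CalA\CalB\|(\W,R)\|_{\mathcal{H}^s}$, so $\tilde G:=G+\big(T_{1+\W}T_{1-\bar Y}R-R\big)$ satisfies the claim. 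The main obstacle throughout is the holomorphic/anti-holomorphic bookkeeping: one has to keep track of which paraproduct remainders are killed, up to low frequencies, by $\nP$ or $\bar{\nP}$ — and hence cost essentially nothing — versus which survive and must be estimated, then check that every surviving term, and in particular each $\gamma$-weighted one, lands in the correct Sobolev space with the correct $\CalB$ (or vorticity-weighted) gain. The individual estimates are all standard; it is the organization of the decomposition that requires care.
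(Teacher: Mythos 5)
Your argument follows essentially the same route as the paper's proof: paralinearize \eqref{WEqn} by paraproduct-decomposing $\ub(1+W_\alpha)$, project with $\nP$ to kill the anti-holomorphic right-hand side and arrive at \eqref{ParaW}, then move $T_{1+\W}T_{1-\bar Y}R$ and finally $R$ to the left in two steps, absorbing the leftovers using the Section~\ref{s:Norms} paraproduct estimates together with the auxiliary-function bounds such as $\|\ub\|_{C^1_*}\lesssim_\CalA\CalB$. The decomposition in part~(2), with the surviving remainder $T_{1+\W}\nP\Pi(\bar Y,R)$ estimated through \eqref{HCHEstimate} at the level $\|\W\|_{C^{3/2}_*}\|R\|_{H^{s-3/2}}$, is exactly the paper's.

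The one step to tighten is your handling of the order~$-1$ remainder in part~(3). Applying \eqref{CompositionPara} directly to the pair $T_{1+\W}$, $T_{1-\bar Y}$ yields the coefficient $M^0_1(1+\W)M^0_0(1-\bar Y)+M^0_0(1+\W)M^0_1(1-\bar Y)$, which is bounded below by a fixed constant because of the $1$'s and hence does not visibly carry the required $\CalB$ gain. One needs the extra observation that the constant parts of $1+\W$ and $1-\bar Y$ contribute nothing to the composition error, so the error is really that of $T_\W$ and $T_{-\bar Y}$, whose \eqref{CompositionPara} coefficient does carry a $\|\W\|_{L^\infty}\lesssim\|\W\|_{C^{3/2}_*}\leq\CalB$ factor. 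The paper avoids this issue by expanding $T_{1+\W}T_{1-\bar Y}R-R$ directly into single paraproducts each carrying a $T_\W$ or $T_{\bar Y}$ factor and hitting them with \eqref{HsLinfty}, which makes the $\CalB$ gain manifest immediately; with either of these fixes your argument closes.
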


\begin{proof}
we use  \eqref{WEqn} to write
 \begin{equation*}
     W_t + T_\ub \partial_\alpha W = -T_{1+W_\alpha} \ub -\Pi(W_\alpha,\ub)- i\frac{\gamma}{2}W + \bar{R}+i\frac{\gamma}{2}\bar{W}.
 \end{equation*}
 After plugging in the expression of $\ub$ and applying the holomorphic projection $\nP$, we can eliminate the anti-holomorphic portion and obtain the paradifferential equation \eqref{ParaW}.

 For the paradifferential equation of $W$, we write
\begin{equation}
 \begin{aligned}
  W_t + T_\ub \partial_\alpha W +T_{1+\W}T_{1-\bar{Y}}R =& T_{1+\W}\nP\Pi(\bar{Y}, R) - \nP\Pi(\W, \ub) \\
  +&i\frac{\gamma}{2}T_{\W}W - i\frac{\gamma}{2}T_{1+\W}T_{\bar{Y}}W - i\frac{\gamma}{2}T_{1+\W}\nP\Pi(W,\bar{Y}). 
 \end{aligned}     \label{WFormula}
\end{equation}
We use \eqref{HCHEstimate}, \eqref{HsLinfty}, the Moser type estimate \eqref{MoserTwo}, and the bound \eqref{UbCOneStar} to estimate  the right-hand side of \eqref{WFormula},
\begin{align*}
&\|T_{1+\W}\nP\Pi(\bar{Y}, R) \|_{H^s} \lesssim_\CalA \|\Pi(\bar{Y},R)\|_{H^s} \lesssim_\CalA \|Y \|_{C^{\frac{3}{2}}_{*}} \|R \|_{H^{s-\frac{3}{2}}}\lesssim_\CalA \|\W \|_{C^{\frac{3}{2}}_{*}} \|R \|_{H^{s-\frac{3}{2}}}, \\
&\|\nP\Pi(\W, \ub) \|_{H^s} \lesssim \| \W\|_{H^{s-1}}\|\ub\|_{C^1_{*}} \lesssim_\CalA \CalB \| \W\|_{H^{s-1}},\\
& \gamma \|T_\W W\|_{H^s} \lesssim \gamma \|\W\|_{L^\infty} \|\W \|_{H^{s-1}}\lesssim_\CalA \CalB \| \W\|_{H^{s-1}},\\
& \gamma  \| T_{1+\W}T_{\bar{Y}}W\|_{H^s}\lesssim_\CalA \gamma\|T_{\bar{Y}}W \|_{H^s}\lesssim_\CalA \gamma \|Y\|_{L^\infty} \|\W \|_{H^{s-1}}\lesssim_\CalA \CalB \| \W\|_{H^{s-1}},\\
& \gamma \|T_{1+\W}\nP\Pi(W,\bar{Y}) \|_{H^s}\lesssim_\CalA \gamma \|\Pi(W,\bar{Y}) \|_{H^s}\lesssim_\CalA  \gamma \|Y\|_{L^\infty} \|\W \|_{H^{s-1}}\lesssim_\CalA \CalB \| \W\|_{H^{s-1}}.
\end{align*}
As a consequence, the right-hand side of \eqref{WFormula} can be absorbed into the source term $G$.

We further rewrite 
\begin{equation*}
 W_t + T_\ub \partial_\alpha W +R =  G + T_{\bar{Y}}R - T_{\W}R -T_{\W}T_{\bar{Y}}R.    
\end{equation*}
By \eqref{HsLinfty}, we see that the right-hand side of the above equation satisfies the desired bound and can be put into $\tilde{G}$.
\end{proof}

Using the equation \eqref{WParaMaterial}, we now obtain a paradifferential equation for $\W$.
\begin{lemma}
For $s>0$, the unknown $\W$ satisfies the paradifferential equation
\begin{equation}
       \W_t + T_\ub \partial_\alpha \W +\partial_\alpha T_{(1+\W)(1-\bar{Y})}R = G_1,  \label{WParaMat}
\end{equation}
where the source term $G_1$  satisfies the Sobolev bound
\begin{equation*}
    \|G_1\|_{H^s} \lesssim_\CalA \CalB \|(\W,R)\|_{\mathcal{H}^{s-\frac{1}{2}}}.
\end{equation*}
\end{lemma}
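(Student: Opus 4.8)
The plan is to derive \eqref{WParaMat} by differentiating the equation \eqref{WParaMaterial} for $W$ in $\alpha$, using $\W = W_\alpha$. For a symbol $a=a(\alpha)$ independent of $\xi$ one has the exact identity $\partial_\alpha(T_a u) = T_a\partial_\alpha u + T_{\partial_\alpha a}u$, so differentiating $T_{\ub}\partial_\alpha W$ produces the principal term $T_{\ub}\partial_\alpha\W$ together with a commutator $T_{\ub_\alpha}\W$. For the remaining term I first write $T_{1+\W}T_{1-\bar{Y}} = T_{(1+\W)(1-\bar{Y})} + E$, where $E := T_{1+\W}T_{1-\bar{Y}} - T_{(1+\W)(1-\bar{Y})}$; then $\partial_\alpha\big(T_{1+\W}T_{1-\bar{Y}}R\big) = \partial_\alpha T_{(1+\W)(1-\bar{Y})}R + \partial_\alpha(ER)$, the first summand being exactly the principal term in \eqref{WParaMat}. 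Collecting everything,
\begin{equation*}
G_1 = \partial_\alpha G - T_{\ub_\alpha}\W - \partial_\alpha(ER),
\end{equation*}
and the task reduces to estimating these three terms in $H^s$ by $\CalB\,\|(\W,R)\|_{\mathcal{H}^{s-\frac{1}{2}}}$, modulo $\CalA$-dependent constants.

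For $\partial_\alpha G$ I apply the bound of the previous lemma \emph{at regularity $s+1$}: since $\|G\|_{H^{s+1}} \lesssim_\CalA \CalB\,\|(\W,R)\|_{\mathcal{H}^{(s+1)-\frac{3}{2}}} = \CalB\,\|(\W,R)\|_{\mathcal{H}^{s-\frac{1}{2}}}$ and $\|\partial_\alpha G\|_{H^s} \le \|G\|_{H^{s+1}}$, this term has the required size; the half-derivative improvement in the source term of \eqref{WParaMat} relative to \eqref{WParaMaterial} comes precisely from reading the earlier estimate one notch higher. For the commutator I use \eqref{HsLinfty} to get $\|T_{\ub_\alpha}\W\|_{H^s} \lesssim \|\ub_\alpha\|_{L^\infty}\|\W\|_{H^s}$, and then the explicit expression \eqref{ubalpha} for $\ub_\alpha$ together with the pointwise control of $R_\alpha$, $\W$ and $\underline{M}$ by $\CalA,\CalB$ from Section \ref{s:Norms} to conclude $\|\ub_\alpha\|_{L^\infty} \lesssim_\CalA \CalB$, whence $\|T_{\ub_\alpha}\W\|_{H^s}\lesssim_\CalA\CalB\|(\W,R)\|_{\mathcal{H}^{s-\frac12}}$.

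The one step that is genuinely delicate is the composition error $\partial_\alpha(ER)$, since a naive bound on $E$ gains only one derivative and would leave $\partial_\alpha(ER)$ in $H^{s-\frac{1}{2}}$ rather than $H^s$. To recover the missing half derivative I apply the symbolic calculus \eqref{CompositionPara} with the sharp index $\rho = \frac{3}{2}$: this is admissible because $\frac{3}{2}\notin\mathbb{N}$, so $C^{3/2}_* = W^{3/2,\infty}$ and hence $1+\W,\, 1-\bar{Y} \in \Gamma^0_{3/2}$ — the latter via the Moser estimate \eqref{YMoser} — and, crucially, because $1+\W$ does not depend on $\xi$, every correction term in $(1+\W)\sharp(1-\bar{Y})$ beyond $(1+\W)(1-\bar{Y})$ vanishes. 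Hence $E$ is an operator of order $-\frac{3}{2}$ with operator norm $\lesssim_\CalA \CalB$, so $\|\partial_\alpha(ER)\|_{H^s} \le \|ER\|_{H^{s+1}} \lesssim_\CalA \CalB\,\|R\|_{H^{s-\frac{1}{2}}} \le \CalB\,\|(\W,R)\|_{\mathcal{H}^{s-\frac{1}{2}}}$, which closes the estimate. I expect this $\rho=\frac32$ reduction to be the main obstacle; everything else is routine bookkeeping with the paraproduct and composition bounds collected in Section \ref{s:Norms}.
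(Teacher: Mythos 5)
Your argument is correct and follows the paper's proof step for step: differentiate \eqref{WParaMaterial}, peel off the commutator $T_{\ub_\alpha}\W$ and the composition error, and recover the crucial half-derivative gain on the composition error by applying \eqref{CompositionPara} with $\rho=\tfrac32$ (the observation that $1+\W$ is $\xi$-independent, so $a\sharp b = ab$, is indeed what makes this work). The only cosmetic deviation is that you estimate the commutator via $\|\ub_\alpha\|_{L^\infty}$ using \eqref{ubalpha} and a bound on $\underline{M}$ that is not actually recorded in Section \ref{s:Norms}; the paper invokes the already-proved bound \eqref{UbCOneStar} on $\|\ub\|_{C^1_*}$ and applies \eqref{HsCmStar}, which is more economical and avoids re-deriving anything.
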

\begin{proof}
Taking the $\alpha$ derivative of \eqref{WParaMaterial}, one gets
\begin{equation}
\W_t + T_\ub \partial_\alpha \W +\partial_\alpha T_{(1+\W)(1-\bar{Y})} R = \partial_\alpha (T_{(1+\W)(1-\bar{Y})}-T_{1+\W}T_{1-\bar{Y}})R -T_{\ub_\alpha}\W   +\partial_\alpha G. \label{WAlphaFormula}  
\end{equation}
To estimate the right-hand side of \eqref{WAlphaFormula}, we use \eqref{CompositionPara}, \eqref{HsCmStar}, and \eqref{UbCOneStar}
\begin{align*}
&\|\partial_\alpha (T_{1+\W}T_{1-\bar{Y}}-T_{(1+\W)(1-\bar{Y})})R \|_{H^s} \lesssim_\CalA \CalB \|R\|_{H^{s-\frac{1}{2}}},\\
&\|T_{\ub_\alpha}\W \|_{H^s}\lesssim \|\ub\|_{C^{1}_{*}}\| \W\|_{H^s} \lesssim_\CalA \CalB \| \W\|_{H^s},\\
&\|\partial_\alpha G \|_{H^s}\leq \|G \|_{H^{s+1}} \lesssim_\CalA \CalB \| (\W,R)\|_{\mathcal{H}^{s-\frac{1}{2}}}.
\end{align*}
Therefore, the right-hand side of \eqref{WAlphaFormula} can be absorbed into the source term $G_1$.
\end{proof}

Next, we paralinearize the second equation of the water wave system \eqref{e:WR}.
Note that it can be reexpressed as
\begin{equation}
\begin{aligned}
&R_t + T_{\ub}\partial_\alpha R+i\gamma R = -\nP T_{R_\alpha}\ub -\nP\Pi(R_\alpha, \ub)+i\nP[(g+\ua)Y] -\nP[R\bar{R}_\alpha] \\
-&i\frac{\gamma}{2}\nP[W\bar{R}_\alpha-\bar{W}_\alpha R]- i\sigma (1-Y)\nP\left[ \frac{\W_{ \alpha}}{J^{\frac{1}{2}}(1+\W)}\right]_{\alpha} +i\sigma (1-Y) \nP \left[\frac{\bar{\W}_{ \alpha}}{J^{\frac{1}{2}}(1+\bar{\W})}\right]_{\alpha}. 
\end{aligned}    \label{ParaRExpression}
\end{equation}
For the non-capillary terms on the right-hand side of \eqref{ParaRExpression}, we have the following paralinearization result.
\begin{lemma} 
Let $s>0$, then one can write
\begin{equation}
  -\nP T_{R_\alpha}\ub -\nP\Pi(R_\alpha, \ub)+i\nP[(g+\ua)Y] -\nP[R\bar{R}_\alpha] 
-i\frac{\gamma}{2}\nP[W\bar{R}_\alpha-\bar{\W} R] 
=  ig\W + K,\label{RNonCapillary}
\end{equation}
where the remainder term $K$ satisfies
\begin{equation*}
\| K\|_{H^{s}}\lesssim_\CalA \CalB \|(\W,R)\|_{\mathcal{H}^{s}}.
\end{equation*}
\end{lemma}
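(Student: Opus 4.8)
The plan is to expand each of the five terms on the left side of \eqref{RNonCapillary} into paraproducts and to observe that the only contribution which is linear in $\W$ and carries the coefficient $g$ sits inside $i\nP[(g+\ua)Y]$, namely $i\nP[gY]$. First I would use that $Y$ is holomorphic, so $\nP[gY]=gY$, together with the elementary identity $Y=\frac{\W}{1+\W}=\W-\W Y$ (equivalently the paralinearization \eqref{YWExpression}), to write
\begin{equation*}
 i\nP[gY]=ig\W-ig\,\W Y ,
\end{equation*}
which peels off the main term $ig\W$ and sends $-ig\,\W Y$ into $K$. Every remaining term on the left of \eqref{RNonCapillary} is already at least quadratic in $(\W,R)$ and the auxiliary functions $\ua,\ub$, so nothing else contributes to the linear part; the task reduces to estimating all these multilinear pieces in $H^s$.

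Next I would treat the remainder term by term, using the paraproduct bounds \eqref{HsLinfty}, \eqref{HCHEstimate}, \eqref{HsProduct}, the Moser estimate \eqref{YMoser}, and the auxiliary bounds \eqref{uACHalf}, \eqref{UaHsEst}, \eqref{UbCOneStar}, \eqref{UbHsEst} of Section~\ref{s:Norms}. The cubic term $i\nP[\ua Y]$ is split as $T_\ua Y+T_Y\ua+\Pi(\ua,Y)$ and controlled by the $L^\infty$ bound on $\ua$ from \eqref{uACHalf}, the Sobolev bound \eqref{UaHsEst}, and \eqref{YMoser}. The advection pieces $\nP T_{R_\alpha}\ub$ and $\nP\Pi(R_\alpha,\ub)$ are handled by \eqref{HsLinfty} and \eqref{HCHEstimate}: the differentiated factor $R_\alpha$ is the low-frequency factor and lies in $C^{\epsilon}_{*}\hookrightarrow L^\infty$ with norm $\lesssim\|R\|_{C^{1+\epsilon}_{*}}\lesssim\CalB$, while $\ub$ is placed in $H^s$ by \eqref{UbHsEst} (and in $L^\infty$ by \eqref{UbCOneStar} when needed). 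The term $\nP[R\bar R_\alpha]$ is, after applying $\nP$, structurally identical to the corresponding piece of the frequency-shift $\ua$ and is estimated exactly as in the proof of \eqref{UaHsEst}; the $\gamma$-dependent terms $\nP[W\bar R_\alpha]$ and $\nP[\bar\W R]$ are treated the same way, with the usual ``$\gamma\simeq$ half a derivative'' bookkeeping and the standard care with $W$ versus $\W=W_\alpha$.

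The point that requires care — rather than a single hard obstacle — is twofold. First, one must check that the prefactor in each estimate is genuinely $\CalB$ and not merely a constant depending on $\CalA$; this works because the $\W$-content of $\CalA$ is dominated by that of $\CalB$ (via $C^{3/2}_{*}\hookrightarrow C^{1+\epsilon}_{*}$, so $\|\W\|_{L^\infty}\lesssim\|\W\|_{C^{3/2}_{*}}\lesssim\CalB$) and similarly for $R$; hence every auxiliary quantity that vanishes at the zero solution ($Y$, $\ua$, $\ub$, $N$, $\dots$) is bounded in $L^\infty$ (or $C^0_{*}$) by $\lesssim_\CalA\CalB$, which is what supplies the $\CalB$ factor throughout — in particular $\|\W Y\|_{H^s}\lesssim_\CalA\|\W\|_{L^\infty}\|\W\|_{H^s}\lesssim_\CalA\CalB\,\|(\W,R)\|_{\mathcal H^s}$. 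Second, several products such as $R\bar R_\alpha$ and $W\bar R_\alpha$ contain, in their paraproduct expansion, a low-high piece ($T_R\bar R_\alpha$, $T_W\bar R_\alpha$) carrying a full derivative on the high-frequency \emph{anti}-holomorphic factor, for which a direct $H^s$ estimate would demand an extra derivative on $R$; this is removed by the holomorphic cancellation, since $R,W$ have Fourier support in $(-\infty,0]$ and $\bar R_\alpha$ in $[0,\infty)$, so such low-high products live at positive frequencies and are annihilated by $\nP$ up to a bounded low-frequency remainder — precisely the mechanism already exploited in Section~\ref{s:Norms}. Collecting all contributions into $K$ then gives $\|K\|_{H^s}\lesssim_\CalA\CalB\,\|(\W,R)\|_{\mathcal H^s}$.
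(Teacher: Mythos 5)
Your proposal is correct and follows the same strategy as the paper's proof: isolate the linear contribution $ig\W$ from $i\nP[gY]$, then estimate every remaining multilinear piece in $H^s$ using the paraproduct bounds, the auxiliary-function estimates from Section~\ref{s:Norms}, and the vanishing of $\nP$ on low-high products of the form $\nP T_R\bar R_\alpha$, $\nP T_W\bar R_\alpha$. The only genuine difference is cosmetic: you peel off $ig\W$ by the exact algebraic identity $Y=\W-\W Y$ and then bound $\|\W Y\|_{H^s}\lesssim_\CalA\CalB\|\W\|_{H^s}$, whereas the paper invokes the paralinearization $Y=T_{(1-Y)^2}\W+E$ of \eqref{YWExpression} and then absorbs $T_{(1-Y)^2-1}\W$ and $E$ into $K$ — both routes rely on the same underlying fact $\|Y\|_{L^\infty}\lesssim_\CalA\|\W\|_{L^\infty}\lesssim\CalB$, so the two are interchangeable here. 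Your explicit discussion of the $\CalA$-versus-$\CalB$ bookkeeping and of the holomorphic cancellation $\nP T_R\bar R_\alpha=0$ makes explicit two points that the paper uses silently; these are accurate and complete the argument.
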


\begin{proof}
We first show that
 \begin{equation*}
  i\nP[(g+\ua)Y] = ig\W + K.   
 \end{equation*}
Indeed, by \eqref{YWExpression}, 
\begin{equation*}
i\nP[gY] = ig Y = igT_{(1-Y)^2}\W +K = ig\W +K,
\end{equation*}
and using the product estimate \eqref{HsProduct} and the estimates for $\ua$ \eqref{uACHalf}, \eqref{UaHsEst},
\begin{equation*}
\|\nP[\ua Y] \|_{H^{s}}\lesssim \| \ua\|_{C_{*}^{\frac{1}{2}}} \|Y\|_{H^s} + \| \ua\|_{H^{s}}\|Y\|_{L^\infty} \lesssim_\CalA \CalB \|(\W, R)\|_{\mathcal{H}^{s}}.
\end{equation*}
Then we show that the rest of the terms on the left of \eqref{RNonCapillary} are perturbative and can be put into $K$.
Using \eqref{HCHEstimate}, \eqref{HsLinfty} and the estimate for $\ub$ \eqref{UbHsEst},
\begin{align*}
&\|\nP T_{R_\alpha} \ub \|_{H^{s}}+ \|\nP \Pi(R_\alpha , \ub) \|_{H^{s}} \lesssim \|R_\alpha\|_{C^{\epsilon}_{*}}\|\ub\|_{H^{s}} \lesssim_\CalA \CalB \|(\W, R)\|_{\mathcal{H}^{s}}, \\
& \|\nP[R\bar{R}_\alpha] \|_{H^{s}} \leq \|T_{\bar{R}_\alpha} R\|_{H^{s}} + \|\nP\Pi(\bar{R}_\alpha, R) \|_{H^{s}}\lesssim \| R\|_{C^{1+\epsilon}_{*}}\|R\|_{H^{s}},\\
 \gamma & \|\nP[W\bar{R}_\alpha] \|_{H^{s}} \leq \gamma\|T_{\bar{R}_\alpha}W \|_{H^{s}} + \gamma \|\nP\Pi[W, \bar{R}_\alpha] \|_{H^{s}} \lesssim  \gamma \|R\|_{C^{\frac{1}{2}}_{*}}\|\W\|_{H^s}, \\
 \gamma & \|\nP[R\bar{\W}] \|_{H^{s}}\leq \gamma \|T_{\bar{\W}}R \|_{H^{s}} + \gamma\|\nP\Pi(\bar{\W},R) \|_{H^{s}} \lesssim
 \gamma \|\W\|_{C^\epsilon_{*}}\|R\|_{H^{s}}.
\end{align*}
These terms are all controlled by $\CalB \|(\W,R) \|_{\mathcal{H}^s}$ and can be absorbed into $K$. 
\end{proof}

For two capillary terms on the right-hand side of \eqref{ParaRExpression}, we  apply \eqref{OneParaJ} and \eqref{TwoParaJ} to establish the following intermediate result.
\begin{lemma}
Let $s> 0$, then
\begin{align}
&\nP\left[ \frac{\W_{ \alpha}}{J^{\frac{1}{2}}(1+\W)}\right] = T_{J^{-\frac{1}{2}}(1-Y)} \W_\alpha -\frac{3}{2}T_{(1-Y)^2 J^{-\frac{1}{2}} \W_\alpha } \W   + K_1, \label{CapiOne}\\
&\nP \left[\frac{\bar{\W}_{ \alpha}}{J^{\frac{1}{2}}(1+\bar{\W})}\right]= -\frac{1}{2}T_{\bar{\W}_\alpha J^{-\frac{3}{2}}}\W  +K_2, \label{CapiTwo}
\end{align}
where the remainder terms $K_1$ and $K_2$ satisfy
\begin{equation*}
\| K_1\|_{H^{s+\frac{1}{2}}}+ \| K_2\|_{H^{s+\frac{1}{2}}}\lesssim_\CalA \CalB \|\W\|_{H^s}, \quad \|K_1\|_{C^{\frac{1}{2}}_{*}}+ \|K_2\|_{C^{\frac{1}{2}}_{*}} \lesssim_\CalA \|\W\|_{C^{\frac{3}{2}}_{*}}.
\end{equation*}
\end{lemma}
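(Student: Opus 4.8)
The plan is to paralinearize each of the two expressions by writing the quotient $\tfrac{\W_\alpha}{J^{1/2}(1+\W)} = (J^{-1/2}(1-Y))\,\W_\alpha$ (respectively $\tfrac{\bar\W_\alpha}{J^{1/2}(1+\bar\W)} = (J^{-1/2}(1-\bar Y))\,\bar\W_\alpha$) as a product of two functions, apply the paraproduct decomposition $fg = T_f g + T_g f + \Pi(f,g)$, and then feed the low-frequency factors $J^{-1/2}(1-Y)-1$ and $J^{-1/2}(1-\bar Y)-1$ through the Jacobian paralinearizations \eqref{OneParaJ} and \eqref{TwoParaJ} already established. The holomorphic projection $\nP$ will be used repeatedly to annihilate anti-holomorphic contributions (in particular the $\bar Y$-paralinearization in the first term produces an anti-holomorphic piece $T_{\cdots}\bar\W$ that drops out, while in the second term the factor $\bar\W_\alpha$ is anti-holomorphic so $\nP$ must land on the low-frequency function, forcing the surviving term to be $-\tfrac12 T_{\bar\W_\alpha J^{-3/2}}\W$ up to balanced remainders).

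Concretely, for \eqref{CapiOne} I would first write
\begin{equation*}
\frac{\W_\alpha}{J^{1/2}(1+\W)} = T_{J^{-1/2}(1-Y)}\W_\alpha + T_{\W_\alpha}\bigl(J^{-1/2}(1-Y)\bigr) + \Pi\bigl(\W_\alpha, J^{-1/2}(1-Y)\bigr).
\end{equation*}
The first term is already the leading paradifferential term after noting $J^{-1/2}(1-Y) = J^{-1/2}(1-Y)$ and using \eqref{TwoParaJ} to linearize it will not be needed there since it already multiplies $\W_\alpha$ directly; the paraproduct of $\W_\alpha$ against the \emph{derivative part} of $J^{-1/2}(1-Y)$ is where \eqref{TwoParaJ} enters: $T_{\W_\alpha}(J^{-1/2}(1-Y)-1) = T_{\W_\alpha}\bigl(-\tfrac32 T_{(1-Y)^2J^{-1/2}}\W + E_2\bigr)$, and the main surviving contribution is $-\tfrac32 T_{(1-Y)^2J^{-1/2}\W_\alpha}\W$ up to a commutator $T_{\W_\alpha}T_{(1-Y)^2J^{-1/2}} - T_{(1-Y)^2J^{-1/2}\W_\alpha}$ of order $-1$ (via \eqref{CompositionPara} with $\rho=1$) and the error term $T_{\W_\alpha}E_2$ estimated by \eqref{HsCmStar} and the $H^{s+3/2}$/$C^{3/2}_*$ bounds on $E_2$. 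The balanced piece $\Pi(\W_\alpha, J^{-1/2}(1-Y))$ goes into $K_1$ using \eqref{HCHEstimate} and \eqref{CCCEstimate} together with the Moser bound \eqref{YMoser} on $Y$ and \eqref{MoserTwo} on $J^{-1/2}$. For \eqref{CapiTwo} the analogous decomposition is $\tfrac{\bar\W_\alpha}{J^{1/2}(1+\bar\W)} = T_{J^{-1/2}(1-\bar Y)}\bar\W_\alpha + T_{\bar\W_\alpha}(J^{-1/2}(1-\bar Y)) + \Pi(\cdots)$; applying $\nP$, the first term is killed (it is anti-holomorphic up to a low-high paraproduct whose high frequency $\bar\W_\alpha$ is anti-holomorphic, so $\nP$ of it is a smoothing remainder), and in the middle term one replaces $J^{-1/2}(1-\bar Y)-1$ by its paralinearization — here the relevant one is the conjugate of \eqref{OneParaJ}, giving $-\tfrac12 T_{(1-\bar Y)J^{-1/2}}\bar\W$, which combined with $T_{\bar\W_\alpha}$ yields (after a commutator and after observing $(1-\bar Y)J^{-1/2} = J^{-1/2}(1+\bar\W)^{-1}\cdot J^{1/2}\cdot\ldots$, i.e. $(1-\bar Y)J^{-1/2}\bar\W_\alpha$ multiplied appropriately) the claimed term $-\tfrac12 T_{\bar\W_\alpha J^{-3/2}}\W$; everything else is a balanced paraproduct or a lower-order commutator and lands in $K_2$.

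The main obstacle I anticipate is bookkeeping rather than any deep new estimate: one must carefully track which factor is holomorphic versus anti-holomorphic so that $\nP$ eliminates exactly the right pieces, and one must verify that every discarded term genuinely lies in $H^{s+1/2}$ with the bound $\CalB\|\W\|_{H^s}$ (so that after the $\partial_\alpha$ in \eqref{ParaRExpression} it becomes an $H^{s-1/2}$ source controlled by $\CalB\|(\W,R)\|_{\mathcal H^{s-1/2}}$) and simultaneously in $C^{1/2}_*$ with the bound $\CalA$-controlled by $\|\W\|_{C^{3/2}_*}$. The half-derivative gain in each discarded term comes either from a $\rho=1$ symbolic-calculus commutator \eqref{CompositionPara} combined with the extra derivative sitting on $\W_\alpha$, or from the $H^{s+3/2}$ regularity of $E_1,E_2$ against which one pays back $3/2$ derivatives, or from the balanced paraproduct estimate \eqref{HCHEstimate} which converts a $C^{3/2}_*$ factor and an $H^{s-1}$ factor into $H^{s+1/2}$. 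The identity $(1+\W)(1-Y)=1$ and its derivative $Y_\alpha = (1-Y)^2\W_\alpha$ will be used to simplify the symbols of the leading terms into the form stated.
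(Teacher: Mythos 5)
Your derivation of \eqref{CapiOne} mirrors the paper's proof: you paraproduct-decompose $\tfrac{\W_\alpha}{J^{1/2}(1+\W)}=\bigl(J^{-1/2}(1-Y)\bigr)\W_\alpha$, peel off the leading low-high term $T_{J^{-1/2}(1-Y)}\W_\alpha$, feed $\nP\bigl(J^{-1/2}(1-Y)-1\bigr)$ through \eqref{TwoParaJ} inside $T_{\W_\alpha}(\cdot)$, and settle the commutator $T_{\W_\alpha}T_{(1-Y)^2J^{-1/2}}-T_{(1-Y)^2J^{-1/2}\W_\alpha}$ and the balanced piece by \eqref{CompositionPara} and \eqref{HCHEstimate}. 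That matches the paper essentially line for line.

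Your treatment of \eqref{CapiTwo}, however, has a concrete error. You factor $\tfrac{\bar\W_\alpha}{J^{1/2}(1+\bar\W)}=\bar\W_\alpha\cdot\bigl(J^{-1/2}(1-\bar Y)\bigr)$ and then claim that the surviving low-high term is $T_{\bar\W_\alpha}\bigl(-\tfrac12 T_{(1-\bar Y)J^{-1/2}}\bar\W\bigr)$, which you say is supplied by ``the conjugate of \eqref{OneParaJ}.'' But the conjugate of \eqref{OneParaJ} is $\bar\nP(J^{-1/2}-1)=-\tfrac12 T_{(1-\bar Y)J^{-1/2}}\bar\W+\bar E_1$, whose leading part is anti-holomorphic in its output; composed with $T_{\bar\W_\alpha}$ it yields a paradifferential operator acting on $\bar\W$, which is annihilated (up to a smoothing error) by the outer $\nP$, and so does not produce the claimed $-\tfrac12 T_{\bar\W_\alpha J^{-3/2}}\W$. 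The piece that actually survives $\nP$ comes from the $\W$-dependence of $J^{-1/2}(1-\bar Y)-1$: applying Lemma \ref{t:Paralinear} to $G(a,b)=(1+a)^{-1/2}(1+b)^{-3/2}-1$ gives $\nP\bigl(J^{-1/2}(1-\bar Y)-1\bigr)=-\tfrac12 T_{J^{-3/2}}\W+E'$, and that is what $T_{\bar\W_\alpha}$ must be paired with. This $\W$-paralinearization of $J^{-1/2}(1-\bar Y)-1$ is not one of the two already proved identities \eqref{OneParaJ}, \eqref{TwoParaJ} nor a conjugate of them, so your route would require an additional paralinearization lemma. The paper sidesteps this by factoring differently, as $\tfrac{\bar\W_\alpha}{1+\bar\W}\cdot J^{-1/2}$, so that the low-high term is $T_{\frac{\bar\W_\alpha}{1+\bar\W}}\nP(J^{-1/2}-1)$, to which \eqref{OneParaJ} applies directly, and the leading symbol becomes $\frac{\bar\W_\alpha}{1+\bar\W}\cdot(1-Y)J^{-1/2}=\bar\W_\alpha J^{-3/2}$ via $(1-Y)(1-\bar Y)=J^{-1}$. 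If you want to keep your factorization, you need to prove the extra paralinearization identity and correct the holomorphic/anti-holomorphic bookkeeping; otherwise, adopting the paper's factorization removes the gap.
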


\begin{proof}
For the first identity, using the paraproduct decomposition and the holomorphic projection $\nP$, we decompose
\begin{equation}
\nP\left[ \frac{\W_{ \alpha}}{J^{\frac{1}{2}}(1+\W)}\right] = T_{J^{-\frac{1}{2}}(1-Y)} \W_\alpha +  T_{\W_\alpha} \nP[J^{-\frac{1}{2}}(1-Y)-1] + \nP\Pi(\W_\alpha, J^{-\frac{1}{2}}(1-Y)-1). \label{FirstCapillary}
\end{equation}
By \eqref{TwoParaJ}, the second term on the right of \eqref{FirstCapillary} $T_{\W_\alpha} \nP[J^{-\frac{1}{2}}(1-Y)-1]$ equals
\begin{equation*}
    -\frac{3}{2}T_{(1-Y)^2 J^{-\frac{1}{2}} \W_\alpha } \W  + T_{\W_\alpha}E_2- \frac{3}{2}\left(T_{\W_\alpha}T_{(1-Y)^2J^{-\frac{1}{2}}} - T_{(1-Y)^2 J^{-\frac{1}{2}} \W_\alpha }\right)\W.
\end{equation*}
Since
\begin{equation*}
 \|T_{\W_\alpha} E_2\|_{H^{s+\frac{1}{2}}}   \lesssim \| \W_\alpha\|_{C^{-1}_{*}}\|E_2 \|_{H^{s+\frac{3}{2}}} \lesssim_\CalA \|\W\|_{C^{\frac{3}{2}}_{*}}\|\W\|_{H^s},  
\end{equation*}
the remainder term
\begin{equation*}
K_3 := T_{\W_\alpha} E_2 - \frac{3}{2}\left(T_{\W_\alpha}T_{(1-Y)^2J^{-\frac{1}{2}}} - T_{(1-Y)^2 J^{-\frac{1}{2}} \W_\alpha }\right)\W
\end{equation*}
satisfies
\begin{equation*}
\|K_3\|_{H^{s+\frac{1}{2}}} \lesssim_\CalA \CalB\|\W \|_{H^s}, \quad \|K_3\|_{C^{\frac{1}{2}}_{*}} \lesssim_\CalA \|\W\|_{C^{\frac{3}{2}}_{*}} 
\end{equation*}
due to the composition estimates \eqref{CompositionPara}, \eqref{CompositionTwo}.
For the balanced term on the right of \eqref{FirstCapillary}, using the Moser type inequalities \eqref{MoserOne}, \eqref{MoserTwo}, it satisfies
\begin{align*}
&\|\nP\Pi(\W_\alpha, J^{-\frac{1}{2}}(1-Y)-1) \|_{H^{s+\frac{1}{2}}} \lesssim \|\W_\alpha\|_{C^{\frac{1}{2}}_{*}}\|J^{-\frac{1}{2}}(1+\W)^{-1}-1 \|_{H^s} \lesssim_\CalA \CalB \|\W\|_{H^s}, \\
& \|\nP\Pi(\W_\alpha, J^{-\frac{1}{2}}(1-Y)-1) \|_{C^{\frac{1}{2}}_{*}} \lesssim \|\W_\alpha\|_{C^{-1}_{*}}\|J^{-\frac{1}{2}}(1+\W)^{-1}-1 \|_{C^{\frac{3}{2}}_{*}} \lesssim_\CalA  \|\W\|_{C^{\frac{3}{2}}_{*}}.
\end{align*}
Hence, by choosing $K_1 = K_3+  \nP\Pi(\W_\alpha, J^{-\frac{1}{2}}(1-Y)-1)$, we get \eqref{CapiOne}. 

Similarly, using the paraproduct decomposition and the holomorphic projection, one writes
\begin{equation}
 \nP \left[\frac{\bar{\W}_{ \alpha}}{J^{\frac{1}{2}}(1+\bar{\W})}\right] =   T_{\frac{\bar{\W}_\alpha}{1+\bar{\W}}} \nP(J^{-\frac{1}{2}}-1) + \nP \Pi\left(\bar{\W}_\alpha(1-\bar{Y}), J^{-\frac{1}{2}}-1\right). \label{SecondCapillary}
\end{equation}
Using \eqref{OneParaJ}, the first term on the right of \eqref{SecondCapillary} equals
\begin{equation*}
T_{\frac{\bar{\W}_\alpha}{1+\bar{\W}}} \nP(J^{-\frac{1}{2}}-1) = -\frac{1}{2}T_{\bar{\W}_\alpha J^{-\frac{3}{2}}}\W +K_4,
\end{equation*}
where the remainder term
\begin{equation*}
K_4 = T_{\frac{\bar{\W}_\alpha}{1+\bar{\W}}} E_1 - \frac{1}{2}\left(T_{\bar{\W}_\alpha J^{-\frac{3}{2}}}- T_{\frac{\bar{\W}_\alpha}{1+\bar{\W}}}T_{(1-Y)J^{-\frac{1}{2}}} \right)\W
\end{equation*}
satisfies
\begin{equation*}
\|K_4\|_{H^{s+\frac{1}{2}}} \lesssim_\CalA \CalB \|\W\|_{H^s}, \quad \|K_4\|_{C^{\frac{1}{2}}_{*}} \lesssim_\CalA \|\W\|_{C^{\frac{3}{2}}_{*}} \|\W\|_{H^s}
\end{equation*}
due to the composition estimates \eqref{CompositionPara}, \eqref{CompositionTwo}. 
For the balanced term on the right of \eqref{SecondCapillary}, using the Moser type inequalities \eqref{MoserOne}, \eqref{MoserTwo}, it satisfies the bounds
\begin{align*}
&\left\|\nP \Pi\left(\bar{\W}_\alpha(1-\bar{Y}), J^{-\frac{1}{2}}-1\right) \right\|_{H^{s+\frac{1}{2}}} \lesssim_\CalA \|\W_\alpha\|_{C^{\frac{1}{2}}_{*}}\|J^{-\frac{1}{2}}-1 \|_{H^s} \lesssim_\CalA \CalB \|\W\|_{H^s},\\
& \left\|\nP \Pi\left(\bar{\W}_\alpha(1-\bar{Y}), J^{-\frac{1}{2}}-1\right) \right\|_{C^{\frac{1}{2}}_{*}} \lesssim \|\W_\alpha\|_{C^{-1}_{*}}\|J^{-\frac{1}{2}}-1 \|_{C^{\frac{3}{2}}_{*}} \lesssim_\CalA  \|\W\|_{C^{\frac{3}{2}}_{*}}.
\end{align*}
Hence, by choosing $K_2 = K_4+   \nP \Pi\left(\bar{\W}_\alpha(1-\bar{Y}), J^{-\frac{1}{2}}-1\right)$, we get \eqref{CapiTwo}.
\end{proof}

Using above paralinearization  results, we then obtain the paralinearization of two capillary terms on the right-hand side of \eqref{ParaRExpression}.
\begin{lemma}
 Let $s>0$, then
 \begin{equation}
 \begin{aligned}
   &- i\sigma (1-Y)\nP\left[ \frac{\W_{ \alpha}}{J^{\frac{1}{2}}(1+\W)}\right]_{\alpha} +i\sigma (1-Y) \nP \left[\frac{\bar{\W}_{ \alpha}}{J^{\frac{1}{2}}(1+\bar{\W})}\right]_{\alpha} \\
   = &-i\sigma T_{J^{-\frac{1}{2}}(1-Y)^2}\W_{\alpha \alpha}+ 3i\sigma T_{J^{-\frac{1}{2}}(1-Y)^3 \W_\alpha} \W_\alpha + K,  
 \end{aligned}
 \label{CurvaturePara}
 \end{equation}
 where the remainder term $K$ satisfies
\begin{equation*}
\| K\|_{H^{s}}\lesssim_\CalA \CalB \|\W\|_{H^{s+\frac{1}{2}}}.
\end{equation*}
\end{lemma}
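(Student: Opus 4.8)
The plan is to substitute the already-established paralinearizations \eqref{CapiOne} and \eqref{CapiTwo}, differentiated once in $\alpha$, into the left side of \eqref{CurvaturePara}, peel off the two leading paradifferential terms, and check that everything else has enough regularity to go into $K$. Since $\partial_\alpha$ commutes through a paraproduct, $\partial_\alpha(T_b u)=T_b\,\partial_\alpha u+T_{\partial_\alpha b}\,u$, differentiating \eqref{CapiOne} produces the second order term $T_{J^{-1/2}(1-Y)}\W_{\alpha\alpha}$, two first order terms $T_{\partial_\alpha(J^{-1/2}(1-Y))}\W_\alpha$ and $-\tfrac{3}{2}T_{(1-Y)^2J^{-1/2}\W_\alpha}\W_\alpha$, a term $-\tfrac{3}{2}T_{\partial_\alpha[(1-Y)^2J^{-1/2}\W_\alpha]}\W$ falling on $\W$ itself, and $\partial_\alpha K_1$; differentiating \eqref{CapiTwo} contributes $-\tfrac{1}{2}T_{\bar{\W}_\alpha J^{-3/2}}\W_\alpha$, a term $-\tfrac{1}{2}T_{\partial_\alpha(\bar{\W}_\alpha J^{-3/2})}\W$, and $\partial_\alpha K_2$. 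With the prefactors $\mp i\sigma(1-Y)$ in front, I would then paralinearize each product $(1-Y)g$ by $(1-Y)g=T_{1-Y}g+T_g(1-Y)+\Pi(1-Y,g)$ and keep only the low-high piece $T_{1-Y}g$.

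For the second order term this gives $-i\sigma\,T_{1-Y}T_{J^{-1/2}(1-Y)}\W_{\alpha\alpha}$; because both symbols depend on $x$ only, the symbolic calculus \eqref{CompositionPara} collapses to ordinary multiplication modulo an operator that smooths by $\rho$, for $\rho$ as large as the H\"older regularity of the symbols permits. Taking $\rho=\tfrac{3}{2}$ (legitimate since $1-Y$ and $J^{-1/2}(1-Y)$ lie in $C^{3/2}_{*}$ by \eqref{MoserTwo}) yields $T_{1-Y}T_{J^{-1/2}(1-Y)}=T_{J^{-1/2}(1-Y)^2}+(\text{operator of order }-\tfrac{3}{2})$, which recovers $-i\sigma T_{J^{-1/2}(1-Y)^2}\W_{\alpha\alpha}$ and leaves an error which, applied to $\W_{\alpha\alpha}\in H^{s-3/2}$, lands in $H^s$ with the $\CalB$ factor coming from $\|\W\|_{C^{3/2}_{*}}$. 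For the first order part the key observation is that the two contributions proportional to $\bar{\W}_\alpha$, namely $-\tfrac{1}{2}J^{-3/2}\bar{\W}_\alpha$ arising from $\partial_\alpha(J^{-1/2}(1-Y))=-\tfrac{3}{2}J^{-1/2}(1-Y)^2\W_\alpha-\tfrac{1}{2}J^{-3/2}\bar{\W}_\alpha$ in the first capillary term and $-\tfrac{1}{2}\bar{\W}_\alpha J^{-3/2}$ in the second, enter with the opposite-signed prefactors $\mp i\sigma(1-Y)$ and cancel identically. The surviving first order symbol from \eqref{CapiOne} is $-3J^{-1/2}(1-Y)^2\W_\alpha$, and after $T_{1-Y}T_{c}=T_{(1-Y)c}+(\text{order }-\tfrac{1}{2})$ this produces precisely $3i\sigma\,T_{J^{-1/2}(1-Y)^3\W_\alpha}\W_\alpha$, the second explicit term of \eqref{CurvaturePara}.

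All remaining pieces go into $K$ and are estimated with the paraproduct, paracomposition and auxiliary function bounds of Section \ref{s:Norms}. For $\partial_\alpha K_1$ and $\partial_\alpha K_2$ one applies the previous lemma with $s$ replaced by $s+\tfrac{1}{2}$ to obtain $\|K_i\|_{H^{s+1}}\lesssim_\CalA\CalB\|\W\|_{H^{s+1/2}}$ and hence $\|\partial_\alpha K_i\|_{H^s}\lesssim_\CalA\CalB\|\W\|_{H^{s+1/2}}$. The remainders $T_g(1-Y)$ and $\Pi(1-Y,g)$ are handled with \eqref{HsLinfty}, \eqref{HsCmStar} and \eqref{HCHEstimate}, using that each $g$ here is a paradifferential expression applied to $\W_\alpha$ or $\W_{\alpha\alpha}$ and hence carries a factor controlled by $\|\W\|_{C^{3/2}_{*}}\le\CalB$. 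The terms $T_{\partial_\alpha[(1-Y)^2J^{-1/2}\W_\alpha]}\W$ and $T_{\partial_\alpha(\bar{\W}_\alpha J^{-3/2})}\W$ are split into a piece carrying a second derivative of $\W$, which lies in $C^{-1/2}_{*}$ with norm $\lesssim_\CalA\CalB$ so that $\|T_{(\cdot)}\W\|_{H^s}\lesssim_\CalA\CalB\|\W\|_{H^{s+1/2}}$ by \eqref{HsCmStar} with $m=\tfrac{1}{2}$, and a genuinely lower order piece bounded by \eqref{HsLinfty}; the composition remainders are controlled by \eqref{CompositionPara} as above.

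The main labor is the bookkeeping of the middle step: tracking all first order symbols generated by the differentiation and by the symbolic composition, and in particular verifying the cancellation of the anti-holomorphic $\bar{\W}_\alpha$ contributions between the two capillary terms. This cancellation is essential, since such a leftover term would be only $H^{s-1/2}$ and could not be placed in $K$; once it is seen, the surviving first order symbol is forced to be $3J^{-1/2}(1-Y)^3\W_\alpha$ and the remaining estimates are routine.
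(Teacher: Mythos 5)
Your proposal is correct and follows essentially the same route as the paper: differentiate \eqref{CapiOne} and \eqref{CapiTwo} via the Leibniz rule for paraproducts, recognize the identical cancellation of the $\bar{\W}_\alpha J^{-3/2}$ contributions (one coming from $\partial_\alpha\bigl(J^{-1/2}(1-Y)\bigr)$, the other from \eqref{CapiTwo}, with opposite-signed prefactors), multiply by $(1-Y)$ through the paraproduct decomposition keeping only the low--high piece, and estimate the remaining pieces with \eqref{CompositionPara}, \eqref{HsCmStar}, \eqref{HCHEstimate} and the Moser bounds. The only difference is purely organizational: the paper first forms the difference of \eqref{CapiTwo} and \eqref{CapiOne}, then differentiates, then multiplies by $(1-Y)$ at the end, whereas you differentiate and apply the prefactors $\mp i\sigma(1-Y)$ to each term separately before combining; the underlying algebra and estimates are identical.
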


\begin{proof}
According to the paralinearization results \eqref{CapiOne} and \eqref{CapiTwo}, one can write
\begin{equation*}
 \nP \left[\frac{\bar{\W}_{ \alpha}}{J^{\frac{1}{2}}(1+\bar{\W})} -\frac{\W_{ \alpha}}{J^{\frac{1}{2}}(1+\W)} \right]=  \frac{3}{2}T_{(1-Y)^2 J^{-\frac{1}{2}} \W_\alpha } \W -T_{J^{-\frac{1}{2}}(1-Y)} \W_\alpha -\frac{1}{2}T_{\bar{\W}_\alpha J^{-\frac{3}{2}}}\W  +K_2 -K_1.  
\end{equation*}
Taking the $\alpha$ derivative of both sides, 
\begin{equation}
\begin{aligned}
  &\nP  \left[\frac{\bar{\W}_{ \alpha}}{J^{\frac{1}{2}}(1+\bar{\W})} -\frac{\W_{ \alpha}}{J^{\frac{1}{2}}(1+\W)} \right]_\alpha = -T_{\frac{1-Y}{J^{\frac{1}{2}}}} \W_{\alpha\alpha} -T_{\left(J^{-\frac{1}{2}}(1-Y)\right)_\alpha} \W_{\alpha}  - \frac{1}{2}T_{\bar{\W}_\alpha J^{-\frac{3}{2}}}\W_\alpha\\
  &+\frac{3}{2}T_{(1-Y)^2 J^{-\frac{1}{2}} \W_\alpha } \W_\alpha + \frac{3}{2}T_{\left((1-Y)^2 J^{-\frac{1}{2}} \W_\alpha \right)_\alpha}\W -\frac{1}{2}T_{\left(\bar{\W}_\alpha J^{-\frac{3}{2}}\right)_\alpha}\W + \partial_\alpha (K_2 -  K_1).
\end{aligned}    
\label{ThirdCapillary}
\end{equation}
The second to the fourth terms on the right-hand side of \eqref{ThirdCapillary} can be simplified to
\begin{align*}
  &-T_{\left(J^{-\frac{1}{2}}(1-Y)\right)_\alpha} \W_{\alpha} - \frac{1}{2}T_{\bar{\W}_\alpha J^{-\frac{3}{2}}}\W_\alpha + \frac{3}{2}T_{(1-Y)^2 J^{-\frac{1}{2}} \W_\alpha } \W_\alpha \\
  =& \frac{3}{2}T_{(1-Y)^2 J^{-\frac{1}{2}} \W_\alpha } \W_\alpha + \frac{1}{2}T_{\bar{\W}_\alpha J^{-\frac{3}{2}}}\W_\alpha - \frac{1}{2}T_{\bar{\W}_\alpha J^{-\frac{3}{2}}}\W_\alpha + \frac{3}{2}T_{(1-Y)^2 J^{-\frac{1}{2}} \W_\alpha } \W_\alpha \\
  =& 3T_{(1-Y)^2 J^{-\frac{1}{2}} \W_\alpha } \W_\alpha.
\end{align*}
For the last three terms on the right-hand side of \eqref{ThirdCapillary}, using \eqref{HsCmStar},
\begin{align*}
 &\left \|T_{\left((1-Y)^2 J^{-\frac{1}{2}} \W_\alpha \right)_\alpha}\W \right\|_{H^{s}} \lesssim \|(1-Y)^2 J^{-\frac{1}{2}} \W_\alpha \|_{C^{\frac{1}{2}}_{*}} \| \W\|_{H^{s+\frac{1}{2}}}\lesssim_\CalA \| \W\|_{C^{\frac{3}{2}}_{*}} \| \W\|_{H^{s+\frac{1}{2}}}, \\
 & \left\| T_{\left(\bar{\W}_\alpha J^{-\frac{3}{2}}\right)_\alpha}\W \right\|_{H^s} \lesssim \|\bar{\W}_\alpha J^{-\frac{3}{2}} \|_{C^{\frac{1}{2}}_{*}} \| \W\|_{H^{s+\frac{1}{2}}}\lesssim_\CalA \| \W\|_{C^{\frac{3}{2}}_{*}} \| \W\|_{H^{s+\frac{1}{2}}}, \\
 &\|\partial_\alpha(K_2 - K_1) \|_{H^{s}} \leq \| K_1\|_{H^{s+1}} + \| K_2\|_{H^{s+1}} \lesssim_\CalA \CalB \|\W\|_{H^{s+\frac{1}{2}}}, 
\end{align*}
so that they are perturbative.
Moreover, using \eqref{CsCmStar},
\begin{equation*}
\left \|T_{\left((1-Y)^2 J^{-\frac{1}{2}} \W_\alpha \right)_\alpha}\W \right\|_{C_{*}^{-\frac{1}{2}}} + \left\| T_{\left(\bar{\W}_\alpha J^{-\frac{3}{2}}\right)_\alpha}\W \right\|_{C_{*}^{-\frac{1}{2}}} +\|\partial_\alpha(K_2 - K_1) \|_{C_{*}^{-\frac{1}{2}}} \lesssim_\CalA \|\W\|_{C^{\frac{3}{2}}_{*}}.
\end{equation*}
Hence, 
\begin{equation*}
 \nP  \left[\frac{\bar{\W}_{ \alpha}}{J^{\frac{1}{2}}(1+\bar{\W})}\right]_\alpha  -\nP\left[\frac{\W_{ \alpha}}{J^{\frac{1}{2}}(1+\W)} \right]_\alpha =  -T_{J^{-\frac{1}{2}}(1-Y)}\W_{\alpha \alpha}+ 3 T_{J^{-\frac{1}{2}}(1-Y)^2 \W_\alpha} \W_\alpha + K_5, 
\end{equation*}
where $K_5$ satisfies the estimate
\begin{equation*}
\| K_5\|_{H^{s}}\lesssim_\CalA \CalB \|\W\|_{H^{s+\frac{1}{2}}} , \quad \|K_5\|_{C^{-\frac{1}{2}}_{*}} \lesssim_\CalA \|\W\|_{C^{\frac{3}{2}}_{*}}.
\end{equation*}
Finally, we multiply the above equation with $(1-Y)$ and use the paraproduct decomposition,
\begin{equation}
\begin{aligned}
 &-(1-Y)T_{J^{-\frac{1}{2}}(1-Y)}\W_{\alpha \alpha} = -T_{J^{-\frac{1}{2}}(1-Y)^2}\W_{\alpha \alpha} + T_{T_{J^{-\frac{1}{2}}(1-Y)}\W_{\alpha \alpha}} Y \\
 &+ \Pi\left(T_{J^{-\frac{1}{2}}(1-Y)}\W_{\alpha \alpha}, Y\right)
 + \left(T_{J^{-\frac{1}{2}}(1-Y)^2} - T_{1-Y}T_{J^{-\frac{1}{2}}(1-Y)}\right)\W_{\alpha \alpha}.
\end{aligned}    \label{CapillaryFour}
\end{equation}
For the last three terms of \eqref{CapillaryFour}, applying \eqref{HsCmStar}, \eqref{CsLInfty} and the composition \eqref{CompositionPara},
\begin{align*}
&\left\|T_{T_{J^{-\frac{1}{2}}(1-Y)}\W_{\alpha \alpha}} Y  \right\|_{H^{s}} \lesssim \left\|T_{J^{-\frac{1}{2}}(1-Y)}\W_{\alpha \alpha} \right\|_{C^{-\frac{1}{2}}_{*}} \|Y\|_{H^{s+\frac{1}{2}}} \lesssim_\CalA \|\W\|_{C^{\frac{3}{2}}_{*}} \|\W\|_{H^{s+\frac{1}{2}}},\\
&\left\|\Pi\left(T_{J^{-\frac{1}{2}}(1-Y)}\W_{\alpha \alpha}, Y\right) \right\|_{H^{s}} \lesssim \|T_{J^{-\frac{1}{2}}(1-Y)}\W_{\alpha \alpha} \|_{H^{s-\frac{3}{2}}}\| Y\|_{C^{\frac{3}{2}}_{*}}\lesssim_\CalA \|\W\|_{C^{\frac{3}{2}}_{*}} \|\W\|_{H^{s+\frac{1}{2}}},\\
& \left\| \left(T_{J^{-\frac{1}{2}}(1-Y)^2} - T_{1-Y}T_{J^{-\frac{1}{2}}(1-Y)}\right)\W_{\alpha \alpha} \right\|_{H^{s}} \lesssim \| Y\|_{L^\infty}\| J^{-\frac{1}{2}}(1-Y)-1\|_{W^{\frac{3}{2}, \infty}}\| \W\|_{H^{s+\frac{1}{2}}} \\
&\lesssim_\CalA \|\W\|_{C^{\frac{3}{2}}_{*}} \|\W\|_{H^{s+\frac{1}{2}}}, 
\end{align*}
where we use the Moser type inequality \eqref{MoserTwo}, and the fact that $W^{\frac{3}{2},\infty}(\R) = C^{\frac{3}{2}}_{*}(\R)$.
Hence, these three terms can be put into the remainder $K$.

Similarly, performing the paraproduct decomposition again, one writes
\begin{equation}
\begin{aligned}
&(1-Y)T_{J^{-\frac{1}{2}}(1-Y)^2 \W_\alpha} \W_\alpha =  T_{J^{-\frac{1}{2}}(1-Y)^3 \W_\alpha} \W_\alpha - T_{T_{J^{-\frac{1}{2}}(1-Y)^2 \W_\alpha} \W_\alpha}Y\\
 -& \Pi\left(T_{J^{-\frac{1}{2}}(1-Y)^2 \W_\alpha} \W_\alpha, Y\right)- \left(T_{J^{-\frac{1}{2}}(1-Y)^3 \W_\alpha}- T_{1-Y}T_{J^{-\frac{1}{2}}(1-Y)^2 \W_\alpha}\right)\W_\alpha. 
\end{aligned} \label{CapillaryFive}
\end{equation}
applying \eqref{HsCmStar}, \eqref{CsLInfty}, \eqref{MoserOne} and the composition \eqref{CompositionPara} for the last three terms on the right of \eqref{CapillaryFive},
\begin{align*}
&\left\|T_{T_{J^{-\frac{1}{2}}(1-Y)^2 \W_\alpha} \W_\alpha}Y \right\|_{H^{s}} \lesssim \left\|T_{J^{-\frac{1}{2}}(1-Y)^2 \W_\alpha} \W_\alpha \right\|_{C^{-\frac{1}{2}}_{*}} \| Y\|_{H^{s+\frac{1}{2}}}\lesssim_\CalA \|\W\|_{C^{\frac{3}{2}}_{*}} \|\W\|_{H^{s+\frac{1}{2}}}, \\ 
& \left\| \Pi\left(T_{J^{-\frac{1}{2}}(1-Y)^2 \W_\alpha} \W_\alpha, Y\right) \right\|_{H^{s}} \lesssim \left\|T_{J^{-\frac{1}{2}}(1-Y)^2 \W_\alpha} \W_\alpha  \right\|_{C^{-\frac{1}{2}}_{*}} \| Y\|_{H^{s+\frac{1}{2}}}\lesssim_\CalA \CalB \|\W\|_{H^{s+\frac{1}{2}}}, \\
& \left\| \left(T_{J^{-\frac{1}{2}}(1-Y)^3 \W_\alpha}- T_{1-Y}T_{J^{-\frac{1}{2}}(1-Y)^2 \W_\alpha}\right)\W_\alpha\right\|_{H^{s}} \lesssim_\CalA \CalB \|\W\|_{H^{s+\frac{1}{2}}},
\end{align*}
these three terms can be absorbed into the remainder $K$.

Finally, the term $i\sigma (1-Y)K_5$ is perturbative, since
\begin{align*}
&\|(1-Y)K_5\|_{H^{s}} \leq \|K_5\|_{H^s} + \|T_{K_5}Y \|_{H^{s-\frac{1}{2}}} + \|T_{Y}K_5 \|_{H^{s}} +  \|\Pi(Y,K_5) \|_{H^{s}}\\
\lesssim& (1+\|Y\|_{L^\infty})\|K_5\|_{H^{s}} + \|K_5\|_{C^{-\frac{1}{2}}_{*}}\|Y\|_{H^{s+\frac{1}{2}}} \lesssim_\CalA \CalB \|\W\|_{H^{s+\frac{1}{2}}}.
\end{align*}
Therefore, we get the paralinearization of the capillary terms \eqref{CurvaturePara}.
\end{proof}

Gathering the paralinearization results for each part of the water wave system \eqref{WParaMat}, \eqref{RNonCapillary} and \eqref{CurvaturePara}, we end this subsection by stating that the water wave system can be reduced to a system of paradifferential equations.
\begin{proposition}[Paralinearization of the water waves] Let $s>0$, the water wave system \eqref{e:WR} can be rewritten as
 \begin{equation} \label{e:ParaWater}
\left\{
\begin{aligned}
 &  T_{D_t} \W +\partial_\alpha T_{(1+\W)(1-\bar{Y})}R = G\\
& T_{D_t} R  +i\gamma R +i\sigma T_{J^{-\frac{1}{2}}(1-Y)^2}\W_{\alpha \alpha}- 3i\sigma T_{J^{-\frac{1}{2}}(1-Y)^3 \W_\alpha} \W_\alpha - ig\W =  K, 
\end{aligned}
\right.
\end{equation}   
where $T_{D_t}: = \partial_t + T_{\ub}\partial_\alpha$ is the \textit{para-material derivative} and the source terms $(G,K)$ satisfy the bound
\begin{equation*}
 \|(G,K) \|_{\mathcal{H}^s} \lesssim_\CalA \CalB \|(\W,R) \|_{\mathcal{H}^s}.
\end{equation*}
\end{proposition}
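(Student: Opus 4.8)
The plan is to assemble the three paralinearization identities already established in this subsection, namely \eqref{WParaMat} for the $\W$-equation, \eqref{RNonCapillary} for the non-capillary part of the $R$-equation, and \eqref{CurvaturePara} for the capillary part, and then to collect the resulting source terms. For the first equation of \eqref{e:ParaWater} there is nothing to do beyond setting $G:=G_1$: by \eqref{WParaMat} one has $\W_t + T_\ub\partial_\alpha\W + \partial_\alpha T_{(1+\W)(1-\bar Y)}R = G_1$, which is exactly $T_{D_t}\W + \partial_\alpha T_{(1+\W)(1-\bar Y)}R = G$ with $T_{D_t}=\partial_t+T_\ub\partial_\alpha$. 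The one point needing attention is the regularity index: the $\W$-slot of $\|(G,K)\|_{\mathcal{H}^s}$ is the $H^{s+\frac12}$ norm, while \eqref{WParaMat} is stated with a half-derivative gain, $\|G_1\|_{H^\tau}\lesssim_\CalA\CalB\|(\W,R)\|_{\mathcal{H}^{\tau-\frac12}}$ for every $\tau>0$. Applying this at $\tau=s+\frac12$ (admissible since $s>0$) gives precisely $\|G\|_{H^{s+\frac12}}\lesssim_\CalA\CalB\|(\W,R)\|_{\mathcal{H}^s}$.

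For the second equation I would start from the reexpression \eqref{ParaRExpression} of the $R$-equation of \eqref{e:WR} (which itself comes from the definitions of the auxiliary functions $\ua,\ub,N,Y$, the paraproduct splitting $\ub R_\alpha = T_\ub R_\alpha + T_{R_\alpha}\ub + \Pi(R_\alpha,\ub)$, and the identity $\nP R = R$ that removes the anti-holomorphic contributions). Into \eqref{ParaRExpression} I would substitute \eqref{RNonCapillary}, which replaces the non-capillary terms by $ig\W + K$, and \eqref{CurvaturePara}, which replaces the capillary terms by $-i\sigma T_{J^{-\frac12}(1-Y)^2}\W_{\alpha\alpha} + 3i\sigma T_{J^{-\frac12}(1-Y)^3\W_\alpha}\W_\alpha + K$. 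Moving the $ig\W$ term, the two paradifferential terms, and the $i\gamma R$ term to the left-hand side then produces
\[
T_{D_t}R + i\gamma R + i\sigma T_{J^{-\frac12}(1-Y)^2}\W_{\alpha\alpha}
 - 3i\sigma T_{J^{-\frac12}(1-Y)^3\W_\alpha}\W_\alpha - ig\W = K,
\]
with $K$ the sum of the two remainders, which is the second equation of \eqref{e:ParaWater}.

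It then remains to record the source bound. The non-capillary remainder satisfies $\|K\|_{H^s}\lesssim_\CalA\CalB\|(\W,R)\|_{\mathcal{H}^s}$ by \eqref{RNonCapillary}; the capillary remainder satisfies $\|K\|_{H^s}\lesssim_\CalA\CalB\|\W\|_{H^{s+\frac12}}$ by \eqref{CurvaturePara}, and $\|\W\|_{H^{s+\frac12}}\le\|(\W,R)\|_{\mathcal{H}^s}$ directly from the definition of the $\mathcal{H}^s$-norm, so both pieces are dominated by $\CalB\|(\W,R)\|_{\mathcal{H}^s}$. Together with the bound on $G$ this yields $\|(G,K)\|_{\mathcal{H}^s}\lesssim_\CalA\CalB\|(\W,R)\|_{\mathcal{H}^s}$, completing the proof.

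There is essentially no new analytic difficulty at this stage, since all the heavy paradifferential bookkeeping has been discharged into the preceding lemmas; the only genuine care needed is, first, the index shift $\tau=s+\frac12$ so that $G$ lands in $H^{s+\frac12}$ rather than $H^s$, and, second, making sure that when \eqref{ParaRExpression} is rearranged into para-material form no term is dropped or double-counted — in particular the $i\gamma R$ term and the $\ua$-contribution hidden inside $i\nP[(g+\ua)Y]$ must be tracked carefully against $T_{D_t}=\partial_t+T_\ub\partial_\alpha$.
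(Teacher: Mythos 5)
Your proposal is correct and matches the paper's (implicit) argument: the paper simply states the proposition as a consolidation of \eqref{WParaMat}, \eqref{RNonCapillary} and \eqref{CurvaturePara}, and your assembly — including the index shift $\tau=s+\frac12$ so that $G$ lands in $H^{s+\frac12}$, and the observation that $\|\W\|_{H^{s+\frac12}}\le\|(\W,R)\|_{\mathcal{H}^s}$ for the capillary remainder — is exactly the bookkeeping the paper leaves to the reader.
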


\subsection{Reduction of the water wave system}
In this subsection, we work with Wahl\'{e}n variables to rewrite the paradifferential system \eqref{e:ParaWater}.
The use of Wahl\'{e}n variables was developed by Wahl\'{e}n \cite{MR2309783} to reformulate the water waves with constant vorticity in Zakharov-Craig-Sulem formulation.
Later it was applied in Berti-Franzoi-Maspero \cite{MR4228858,MR4658635} on the torus.  
Since we are working with the holomorphic coordinates, we derive the equivalent Wahlen variables and as the expected  new unknowns is slightly different.

Define new unknowns $(\eta, \zeta) : = (\W, R-i\frac{\gamma}{2}W)$, they are also holomorphic functions.
We then rewrite the paradifferential water wave system \eqref{e:ParaWater} using $(\eta, \zeta)$. 
\begin{proposition}
Let $s>0$, then  the paradifferential water wave system \eqref{e:ParaWater} can be written as
\begin{equation}
 \left\{
    \begin{array}{lr}
    T_{D_t} \eta  +i\frac{\gamma}{2}\eta +  T_{(1-\bar{Y})(1+\W)}\zeta_\alpha + T_{((1-\bar{Y})(1+\W))_\alpha}\zeta= \tilde{G}  &\\
    T_{D_t} \zeta +i\frac{\gamma}{2}\zeta + i\sigma T_{J^{-\frac{1}{2}}(1-Y)^2}\eta_{\alpha \alpha}- 3i\sigma T_{J^{-\frac{1}{2}}(1-Y)^3 \W_\alpha}\eta_\alpha-ig\eta-\frac{\gamma^2}{4}\partial_\alpha^{-1}\eta  = \tilde{K}, & 
             \end{array}
\right.  \label{e:etaxi}
\end{equation}
where  source terms $(\tilde{G},\tilde{K})$ satisfy
\begin{equation*}
 \|(\tilde{G},\tilde{K}) \|_{\mathcal{H}^s} \lesssim_\CalA \CalB \|(\W,R) \|_{\mathcal{H}^s}.
\end{equation*}
\end{proposition}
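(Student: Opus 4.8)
The plan is to substitute the Wahl\'{e}n change of variable directly into \eqref{e:ParaWater} and collect terms. Write $\eta := \W$ and $R = \zeta + i\frac{\gamma}{2}W$, so that $\zeta = R - i\frac{\gamma}{2}W$ is holomorphic. Since $\eta = \W$ and the para-material derivative $T_{D_t} = \partial_t + T_{\ub}\partial_\alpha$ does not involve the unknowns that change, the gravity term $-ig\W$ and the two curvature paraproducts $i\sigma T_{J^{-1/2}(1-Y)^2}\W_{\alpha\alpha}$, $-3i\sigma T_{J^{-1/2}(1-Y)^3\W_\alpha}\W_\alpha$ in \eqref{e:ParaWater} are merely relabelled with $\eta$ in place of $\W$; all the genuine content sits in the terms carrying a factor $\gamma$. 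Apart from the substitution I will use the exact Leibniz rule $\partial_\alpha T_a = T_{a_\alpha} + T_a\partial_\alpha$ (valid at the symbol level since $i\xi = i(\xi-\eta) + i\eta$) and the reduced $W$-equation \eqref{WParaTwo}, rewritten as $T_{D_t}W = -R + \tilde{G}_0$ with $\|\tilde{G}_0\|_{H^s}\lesssim_\CalA\CalB\|(\W,R)\|_{\mathcal{H}^s}$.

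For the first equation I expand $\partial_\alpha T_{(1+\W)(1-\bar{Y})}R = T_{(1+\W)(1-\bar{Y})}R_\alpha + T_{((1+\W)(1-\bar{Y}))_\alpha}R$ and insert $R = \zeta + i\frac{\gamma}{2}W$ and $R_\alpha = \zeta_\alpha + i\frac{\gamma}{2}\W = \zeta_\alpha + i\frac{\gamma}{2}\eta$. Using $(1+\W)(1-\bar{Y}) = (1+\W)(1+\bar{\W})^{-1}$ (so that this symbol minus $1$ is $(\W-\bar{\W})(1+\bar{\W})^{-1}$), the principal part is $T_{(1-\bar{Y})(1+\W)}\zeta_\alpha + T_{((1-\bar{Y})(1+\W))_\alpha}\zeta$ together with the distinguished lower-order term $i\frac{\gamma}{2}\eta$, and the residual source is, modulo a harmless low-frequency smoothing term,
\[
\tilde{G} = G - i\tfrac{\gamma}{2}\,T_{(\W-\bar{\W})(1+\bar{\W})^{-1}}\,\eta - i\tfrac{\gamma}{2}\,T_{((1+\W)(1-\bar{Y}))_\alpha}\,W .
\]
Both new terms are perturbative by \eqref{HsLinfty} (using \eqref{HsCmStar} to trade regularity onto $W$): the two symbols lie in $L^\infty$ with norms $\lesssim_\CalA\|\W\|_{L^\infty}$ and $\lesssim_\CalA\|\W_\alpha\|_{L^\infty}$ respectively, and $W = \partial_\alpha^{-1}\W$ gains a derivative over $\W$, so each term is bounded by $\lesssim_\CalA\gamma\|\W\|_{C^{1+\epsilon}_{*}}\|(\W,R)\|_{\mathcal{H}^s}\leq\CalB\|(\W,R)\|_{\mathcal{H}^s}$ — the crucial point being that the factor $\gamma$ stays attached to a $C^{1+\epsilon}_{*}$-norm of $\W$ and is thereby absorbed into $\CalB$. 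Together with $\|G\|_{\mathcal{H}^s}\lesssim_\CalA\CalB\|(\W,R)\|_{\mathcal{H}^s}$ from the paralinearization of \eqref{e:ParaWater}, this yields the first equation of \eqref{e:etaxi}.

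For the second equation I insert $R = \zeta + i\frac{\gamma}{2}W$ into \eqref{e:ParaWater}: the curvature and gravity terms are unchanged ($\eta=\W$), $i\gamma R$ becomes $i\gamma\zeta - \frac{\gamma^2}{2}W$, and $T_{D_t}R$ becomes $T_{D_t}\zeta + i\frac{\gamma}{2}T_{D_t}W$. Using \eqref{WParaTwo} in the form $T_{D_t}W = -R + \tilde{G}_0 = -\zeta - i\frac{\gamma}{2}W + \tilde{G}_0$ gives $i\frac{\gamma}{2}T_{D_t}W = -i\frac{\gamma}{2}\zeta + \frac{\gamma^2}{4}W + i\frac{\gamma}{2}\tilde{G}_0$. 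Collecting, the $\zeta$-terms combine to $i\gamma\zeta - i\frac{\gamma}{2}\zeta = i\frac{\gamma}{2}\zeta$, the $W$-terms to $-\frac{\gamma^2}{2}W + \frac{\gamma^2}{4}W = -\frac{\gamma^2}{4}W = -\frac{\gamma^2}{4}\partial_\alpha^{-1}\eta$, and the residual source is $\tilde{K} = K - i\frac{\gamma}{2}\tilde{G}_0$, which obeys $\|\tilde{K}\|_{H^s}\lesssim\|K\|_{H^s}+\gamma\|\tilde{G}_0\|_{H^s}\lesssim_\CalA\CalB\|(\W,R)\|_{\mathcal{H}^s}$ by the bounds on $K$ (from the paralinearization of \eqref{e:ParaWater}) and on $\tilde{G}_0$ (from \eqref{WParaTwo}). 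This is exactly the second equation of \eqref{e:etaxi}.

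The computation itself is short; the only point that needs care is the remainder bookkeeping. Every factor of $\gamma$ produced by the change of variable must be kept paired with a $C^{1+\epsilon}_{*}$ (or $C^{1/2}_{*}$) norm of $\W$ or $R$ so that it is absorbed into $\CalB$ rather than surviving as a bare constant — exactly the heuristic that $\gamma$ counts as half a derivative — and one must make sense of $\partial_\alpha^{-1}$ acting on $W$, which is legitimate in the asymptotically flat or periodic-zero-mean setting where $\hat{\W}(0)=0$. The handful of ultra-low-order smoothing discrepancies coming from the low-frequency truncation in the definition of the paraproduct are inconsequential. Everything else is routine paradifferential calculus via \eqref{HsLinfty}, \eqref{HsCmStar}, \eqref{CompositionPara} and the Moser estimate \eqref{MoserTwo}; no structure beyond \eqref{e:ParaWater} and \eqref{WParaTwo} is needed.
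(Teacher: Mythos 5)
Your proof is correct and follows the same route as the paper: substitute $\W=\eta$, $R=\zeta+i\frac{\gamma}{2}W$ into \eqref{e:ParaWater}, use the Leibniz rule for $\partial_\alpha T_a$ on the first equation, and invoke \eqref{WParaTwo} to replace $T_{D_t}W$ by $-R+\tilde G_0$ in the second, collecting the residual $\gamma$-terms into $\tilde G,\tilde K$ and bounding them via \eqref{HsLinfty}, \eqref{HsCmStar}. You also correctly used the substitution $R=\zeta+i\frac{\gamma}{2}W$ (with $W=\partial_\alpha^{-1}\eta$) rather than the paper's stated $R=\zeta+i\frac{\gamma}{2}\eta$, which is a typo in the paper's proof, and your explicit formula for $\tilde G$ agrees with theirs since $(1+\W)(1-\bar{Y})-1=\W-\bar{Y}-\W\bar{Y}=(\W-\bar{\W})/(1+\bar{\W})$.
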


\begin{proof}
We substitute $(\W, R) = (\eta, \zeta + i\frac{\gamma}{2}\eta)$ in \eqref{e:ParaWater} and simplify the system.
Then we obtain the first equation of \eqref{e:etaxi} with 
\begin{equation*}
    \tilde{G} = G - i\frac{\gamma}{2}T_{\W-\bar{Y}-\W\bar{Y}}\W - i\frac{\gamma}{2}T_{(\W-\bar{Y}-\W\bar{Y})_\alpha} W,
\end{equation*}
which satisfies the source term bound by \eqref{HsLinfty}, \eqref{HsCmStar}.

For the second equation of \eqref{e:etaxi}, applying \eqref{WParaTwo} yields the result.
$\eta$ is the differentiated variable, and we write $\partial_\alpha^{-1}\eta$ here for $W$.
\end{proof}

Alternatively, the system \eqref{e:etaxi} can be rewritten as
\begin{equation*}
    T_{D_t} \begin{pmatrix} \eta \\
    \zeta \end{pmatrix} + 
    \begin{pmatrix}
    i\frac{\gamma}{2} &    T_\lambda \\
    T_k &  i\frac{\gamma}{2}
    \end{pmatrix}
    \begin{pmatrix} \eta \\
    \zeta \end{pmatrix}
    = \begin{pmatrix} \tilde{G} \\
    \tilde{K} \end{pmatrix},
\end{equation*}
where the symbols of the paradifferential operators are given by
\begin{equation}
\begin{aligned}
&\lambda(\alpha, \xi) = i(1-\bar{Y})(1+\W)\xi + ((1-\bar{Y})(1+\W))_\alpha, \\
& k(\alpha, \xi) = -i\sigma J^{-\frac{1}{2}}(1-Y)^2 \xi^2+ 3\sigma J^{-\frac{1}{2}}(1-Y)^3 \W_\alpha \xi -ig + i\frac{\gamma^2}{4\xi}.
\end{aligned} \label{LambdaKDef}   
\end{equation}
The use of  Wahl\'{e}n variables makes the diagonal components of the matrix equal; they are both $i\frac{\gamma}{2}$.

\subsection{Symmetrization of the water wave system}
In this subsection, we use the idea in Section 2 of Alazard, Baldi, and Han-Kwan \cite{MR3776276} to symmetrize the paradifferential system \eqref{e:ParaWater}.
Similar symmetrizations were carried out using Zakharov-Craig-Sulem formulation in \cite{MR2931520, MR3770970}.

To get a quick idea for the symmetrization, we first consider the linearized system of the water waves \eqref{e:etaxi} at zero solutions.
It is given by
\begin{equation}
 \left\{
    \begin{array}{lr}
    \partial_t \eta  +i\frac{\gamma}{2}\eta +  \zeta_\alpha = 0  &\\
    \partial_t \zeta +i\frac{\gamma}{2}\zeta + i\sigma \eta_{\alpha \alpha}-ig\eta-\frac{\gamma^2}{4}\partial_\alpha^{-1}\eta  = 0. & 
             \end{array}
\right.  \label{LinearZero}
\end{equation} 
Define the differential operators
\begin{equation}
    L(D) = \sqrt{\sigma|D|^3 + g|D|+ \frac{\gamma^2}{4}}, \quad M(D) = L(D)|D|^{-1}, \label{LDDef}
\end{equation}
then \eqref{LinearZero} can be symmetrize to the system
\begin{equation*}
    \partial_t \begin{pmatrix} M\eta \\
    \zeta\end{pmatrix} + 
    \begin{pmatrix}
    i\frac{\gamma}{2} &    -iL(D) \\
    -iL(D) &  i\frac{\gamma}{2}
    \end{pmatrix}
    \begin{pmatrix} M\eta \\
    \zeta \end{pmatrix}
    = \begin{pmatrix} 0 \\
    0 \end{pmatrix},
\end{equation*}
restricted to holomorphic functions.
The square matrix for dispersive terms is now a symmetric matrix whose two diagonal components are equal.

Inspired by the symmetrization of the linearized system at zero, for the nonlinear paradifferential system \eqref{e:etaxi}, we also want to symmetrize the system, but the computations should be carried out at the paradifferential level.
We aim to find symbols $p(\alpha, \xi), q(\alpha, \xi)$ such that new unknowns
\begin{equation*}
    U := T_p \eta, \quad V: = T_q \zeta
\end{equation*}
solve the paradifferential system
\begin{equation*}
    T_{D_t} \begin{pmatrix} U \\
    V \end{pmatrix} + 
    \begin{pmatrix}
    i\frac{\gamma}{2} &    -iL^{\frac{1}{2}}T_cL^{\frac{1}{2}} \\
    -iL^{\frac{1}{2}}T_cL^{\frac{1}{2}} &  i\frac{\gamma}{2}
    \end{pmatrix}
    \begin{pmatrix} U \\
    V \end{pmatrix}
    = \begin{pmatrix} G^\sharp \\
    K^\sharp \end{pmatrix},
\end{equation*}
for some symbol $c(\alpha, \xi)$ such that for $s>0$ the source terms satisfy  the bound
\begin{equation*}
 \|(G^\sharp,K^\sharp) \|_{H^s \times H^s} \lesssim_\CalA \CalB \|(\W,R) \|_{\mathcal{H}^s}.
\end{equation*}

The choices of the symbols $p$, $q$ and $c$ are not unique, they are allowed to be defined in an equivalent class modulo  admissible remainders.
To make the argument more precise, we give the following definition of the equivalent class of operators.

\begin{definition}
Let $s\in \mathbb{R}$,  and consider two families of operators of order $s$,
\begin{equation*}
    \{A(t): t\in [0,T] \}, \quad  \{B(t): t\in [0,T] \},
\end{equation*}
that depend on $(\W, R)$.
We write $A\sim B$  if $A-B$ is of order $s-\frac{3}{2}$, and the following condition is fulfilled: for all $\mu\in \mathbb{R}$,  such that for $a.e.$ $t\in [0,T]$,
\begin{equation*}
    \|A(t)-B(t)\|_{H^\mu \rightarrow H^{\mu -s + \frac{3}{2}}} \lesssim_\CalA \CalB.
\end{equation*}
\end{definition}

Let us consider two operators $T_a = T_{a^{(s_1)}+ a^{(s_1 -1)} }$ and $T_b = T_{b^{(s_2)}+ b^{(s_2 -1)} }$ such that
\begin{align*}
&a^{(s_1)} \in \Gamma^{s_1}_{\frac{3}{2}}, \quad a^{(s_1 -1)} \in \Gamma^{s_1 -1}_{\frac{1}{2}}, \quad b^{(s_2)} \in \Gamma^{s_2}_{\frac{3}{2}}, \quad b^{(s_2-1 )} \in \Gamma^{s_2-1 }_{\frac{1}{2}},\quad s_1 + s_2 =s,\\
& M^{s_1}_{\frac{3}{2}}(a^{(s_1)}) + M^{s_2}_{\frac{3}{2}}(b^{(s_2)}) \lesssim_\CalA \CalB, \quad  M^{s_1}_{0}(a^{(s_1)}) + M^{s_2}_{0}(b^{(s_2)}) \lesssim_\CalA 1, \\
 &M^{s_1-1}_{\frac{1}{2}}(a^{(s_1-1)}) + M^{s_2-1}_{\frac{1}{2}}(b^{(s_2-1)}) \lesssim_\CalA \CalB, \quad  M^{s_1-1}_{0}(a^{(s_1-1)}) + M^{s_2-1}_{0}(b^{(s_2-1)}) \lesssim_\CalA 1.
\end{align*}
Using the symbolic calculus \eqref{CompositionPara}, with $\rho = \frac{3}{2}, \frac{1}{2}$ respectively,
\begin{alignat*}{6}
&T_{a^{(s_1)}}T_{b^{(s_2)}} \sim T_{a^{(s_1)}b^{(s_2)}-i\partial_\xi a^{(s_1)}  \partial_\alpha b^{(s_2)}}, \quad &&T_{a^{(s_1)}}T_{b^{(s_2-1)}} \sim T_{a^{(s_1)} b^{(s_2-1)}}, \\
& T_{a^{(s_1-1)}}T_{b^{(s_2)}} \sim T_{a^{(s_1-1)} b^{(s_2)}}, \quad  &&T_{a^{(s_1-1)}}T_{b^{(s_2-1)}} \sim 0.
\end{alignat*}
Hence, we get
\begin{equation}
AB \sim T_{a^{(s_1)}b^{(s_2)}-i\partial_\xi a^{(s_1)}  \partial_\alpha b^{(s_2)} + a^{(s_1)} b^{(s_2-1)} + a^{(s_1-1)} b^{(s_2)}}. \label{SymbolicEquivalence}
\end{equation}

In order to symmetrize the paradifferential system \eqref{e:etaxi}, the matrix operator
\begin{equation*}
\begin{pmatrix}
T_p &    0 \\
0 &  T_q
\end{pmatrix}
\begin{pmatrix}
    i\frac{\gamma}{2} &    T_\lambda \\
    T_k &  i\frac{\gamma}{2}
\end{pmatrix}
-
\begin{pmatrix}
    i\frac{\gamma}{2} &    -iL^{\frac{1}{2}}T_cL^{\frac{1}{2}} \\
    -iL^{\frac{1}{2}}T_cL^{\frac{1}{2}} &  i\frac{\gamma}{2}
\end{pmatrix}
\begin{pmatrix}
T_p &    0 \\
0 &  T_q
\end{pmatrix}
\end{equation*}
needs to be a matrix of  admissible remainders.
Alternatively, it is equivalent to show the following result.
\begin{proposition} \label{t:TwoEquivalence}
For the operator $L(D)$ defined in \eqref{LDDef}, symbols $\lambda, k$ defined in \eqref{LambdaKDef}, and $\ell(\xi)$ be the symbol of the operator $L(D)$,  we consider the function $c(\alpha) := J^{-\frac{3}{4}}$ and  symbols
\begin{equation*}
    p(\alpha,\xi) = p^{(\frac{1}{2})}+ p^{(-\frac{1}{2})} , \quad q(\alpha,\xi) = q^{(0)}(\alpha)= J^{\frac{1}{4}},
\end{equation*}
where
\begin{equation*}
p^{(\frac{1}{2})} := -(1-Y)(1+\bar{\W})\xi^{-1}J^{-\frac{1}{2}}\ell(\xi),
\end{equation*}
and $p^{(-\frac{1}{2})}$ is defined in \eqref{PSubleading}.
Then  the following equivalence relations hold for the operators. 
\begin{equation}
iT_pT_\lambda \sim  L^{\frac{1}{2}}T_cL^{\frac{1}{2}} T_q, \quad iT_q T_k \sim  L^{\frac{1}{2}}T_cL^{\frac{1}{2}} T_p. \label{Equivalence}
\end{equation}   
\end{proposition}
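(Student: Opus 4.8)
\emph{Plan.} The plan is to turn each equivalence in \eqref{Equivalence} into a pair of scalar symbol identities — one at the top order and one at the order $\tfrac32$ below — and verify those by direct computation. Throughout one works on holomorphic functions, so $\xi\le0$ and $|\xi|=-\xi$; only the behaviour of every symbol for $|\xi|\ge\tfrac12$ matters for the paradifferential quantization, so the low-frequency singularities of $\ell(\xi)=\sqrt{\sigma|\xi|^3+g|\xi|+\tfrac{\gamma^2}{4}}$ and of $M(D)=L(D)|D|^{-1}$ are harmless, and $L^{1/2}$ may be replaced by the paradifferential operator with symbol $\ell(\xi)^{1/2}$ up to a smoothing error.

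First I would check that all the symbols at hand lie in the two-scale class used to derive \eqref{SymbolicEquivalence}. Since $Y=\W/(1+\W)$ one has $(1-Y)(1+\W)=1$ and $(1-\bar Y)(1+\W)=(1+\W)/(1+\bar\W)$, so the $\alpha$-coefficients of $\lambda^{(1)}$, $k^{(2)}$, $k^{(1)}$, $p^{(1/2)}$, $c=J^{-3/4}$ and $q=J^{1/4}$ are smooth functions of $(\W,\bar\W)$; by the Moser estimates \eqref{MoserOne}--\eqref{MoserTwo} their $C^{3/2}_*$-norms are $\lesssim_\CalA\CalB$ and their $L^\infty$-norms $\lesssim_\CalA 1$. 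Hence $p^{(1/2)}\in\Gamma^{1/2}_{3/2}$, $\lambda^{(1)}\in\Gamma^1_{3/2}$, $k^{(2)}\in\Gamma^2_{3/2}$, $c,q\in\Gamma^0_{3/2}$ with the leading bounds of the paragraph preceding the statement, while $\lambda^{(0)}=((1-\bar Y)(1+\W))_\alpha$, $k^{(1)}$ and $p^{(-1/2)}$ from \eqref{PSubleading} lie in $\Gamma^{0}_{1/2}$, $\Gamma^1_{1/2}$, $\Gamma^{-1/2}_{1/2}$ with the corresponding subleading bounds; the order-$0$ part $k^{(0)}$ of $k$ only produces an operator of order $0$ after composition with $T_q$, which is an admissible remainder for $\sim$.

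Next I would expand both sides of each equivalence using the symbolic calculus \eqref{CompositionPara} (equivalently \eqref{SymbolicEquivalence}), exploiting that $\ell^{1/2}$ is $\alpha$-independent: composing $L^{1/2}$ on the right of a paradifferential operator produces no Moyal correction, and on the left it produces only $-i\partial_\xi(\ell^{1/2})\partial_\alpha(\cdot)$. This gives $L^{1/2}T_cL^{1/2}\sim T_{\ell c-\frac{i}{2}(\partial_\xi\ell)\partial_\alpha c}$, which one then composes with $T_q$ (resp.\ $T_p=T_{p^{(1/2)}+p^{(-1/2)}}$); likewise $iT_pT_\lambda$ expands by \eqref{SymbolicEquivalence}, while $iT_qT_k\sim T_{iq(k^{(2)}+k^{(1)})}$ with no correction since $q$ is $\xi$-independent. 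Matching reduces \eqref{Equivalence} to four scalar identities. The two leading ones are immediate: $i\,p^{(1/2)}\lambda^{(1)}=(1-Y)(1+\W)(1-\bar Y)(1+\bar\W)J^{-1/2}\ell=J^{-1/2}\ell=\ell c q$, and $i\,q\,k^{(2)}=\sigma J^{-1/4}(1-Y)^2\xi^2$ agrees with $\ell c\,p^{(1/2)}$ at top order because $\ell(\xi)^2=\sigma|\xi|^3+(\text{lower order})$, $|\xi|^3\xi^{-1}=-\xi^2$ for $\xi<0$, $(1-Y)(1+\W)=1$ and $J^{-1/4}(1-Y)=J^{-5/4}(1+\bar\W)$. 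The subleading identity from the first equivalence, namely $i\bigl(-i\partial_\xi p^{(1/2)}\partial_\alpha\lambda^{(1)}+p^{(1/2)}\lambda^{(0)}+p^{(-1/2)}\lambda^{(1)}\bigr)=-\tfrac{i}{4}(\partial_\xi\ell)\partial_\alpha(J^{-1/2})$ — using $c\,\partial_\alpha q+\tfrac12 q\,\partial_\alpha c=\tfrac14\partial_\alpha(J^{-1/2})$ — is solvable for $p^{(-1/2)}$ and is precisely \eqref{PSubleading}; so the first relation holds once the symbol class of $p^{(-1/2)}$ is confirmed. There remains the subleading identity for the second relation,
\[
i\,q\,k^{(1)}=-i\,\partial_\xi(\ell c)\,\partial_\alpha p^{(1/2)}+\ell c\,p^{(-1/2)}-\tfrac{i}{2}(\partial_\xi\ell)(\partial_\alpha c)\,p^{(1/2)},
\]
to be verified with this same $p^{(-1/2)}$; one inserts the explicit formulas, simplifies with $(1-Y)(1+\W)=1$, the identity $J^{-1/4}(1-Y)=J^{-5/4}(1+\bar\W)$ and the dispersion relation $\ell^2=\sigma|\xi|^3+g|\xi|+\tfrac{\gamma^2}{4}$, and checks the cancellation.

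\textbf{Main obstacle.} The crux is this last order-$1$ symbol identity for the second equivalence: it is an exact algebraic cancellation among the Moyal corrections, the subleading symbol $p^{(-1/2)}$ and the subleading part of $\ell$, and it is where the normalization $q=J^{1/4}$, $c=J^{-3/4}$ is used in an essential way — the reason a single pair $(p,q)$ symmetrizes both off-diagonal operators is the self-adjointness of the target off-diagonal part $-iL^{1/2}T_cL^{1/2}$ (since $c=J^{-3/4}>0$ is real and $L^{1/2}$ is self-adjoint) together with the Hamiltonian structure of \eqref{e:etaxi}. A secondary, purely technical, point is to track the $C^{3/2}_*$ and $C^{1/2}_*$ regularity of all coefficients — using the bounds for $Y$, $\ub$ and $J^{\pm1/2}$ from Section~\ref{s:Norms} — so that every remainder generated along the way satisfies the $\lesssim_\CalA\CalB$ bound in the definition of $\sim$.
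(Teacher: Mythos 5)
Your plan mirrors the paper's proof: expand both sides of each relation in \eqref{Equivalence} via the symbolic calculus \eqref{CompositionPara} (packaged in \eqref{SymbolicEquivalence}), match at the two highest orders, take $p^{(-\frac12)}$ from the first relation's subleading match (which is exactly \eqref{PSubleading}), and read $c=J^{-3/4}$ and $q^{(0)}=J^{1/4}$ from the second. The only difference is presentational: the paper treats $(c,p^{(\frac12)},p^{(-\frac12)},q^{(0)})$ as four unknowns determined by the four matching equations after fixing $q^{(-1)}=0$, so the order-$1$ identity you flag as the main obstacle is, in the paper, the equation that is \emph{solved} for $q^{(0)}$ rather than verified a posteriori; the underlying algebra is identical, and when you carry it out you should adopt the paper's simplification $\ell\approx|\sigma\xi^3|^{1/2}$ in evaluating $\partial_\xi\ell$ and $p^{(\frac12)}$, since the resulting error is of order $\le-\tfrac12$ and hence falls into the admissible remainder class.
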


\begin{proof}
Using symbolic calculus \eqref{CompositionPara},    
\begin{equation*}
L^{\frac{1}{2}}T_cL^{\frac{1}{2}} \sim T_m, \quad m = c\ell -\frac{i}{2}\partial_\xi \ell \partial_\alpha c.  
\end{equation*}
Hence, let $q = q^{(0)} + q^{(-1)}$, where $q^{(0)} \in \Gamma^0_{\frac{3}{2}}$ and $q^{(-1)} \in \Gamma^{-1}_{\frac{1}{2}}$, then
\begin{equation*}
L^{\frac{1}{2}}T_cL^{\frac{1}{2}}T_q \sim T_m T_q \sim T_{\tilde{\mathfrak{q}}_1}.   
\end{equation*}
Here the symbol $\tilde{\mathfrak{q}}_1$ equals
\begin{equation*}
\tilde{\mathfrak{q}}_1 = mq^{(0)}+ c\ell q^{(-1)} -ic \partial_\xi \ell \partial_\alpha q^{(0)} = c\ell q^{(0)} -\frac{i}{2}\partial_\xi \ell \partial_\alpha c q^{(0)} -ic \partial_\xi \ell \partial_\alpha q^{(0)}+ c\ell q^{(-1)}. 
\end{equation*}
Similarly, let $p = p^{(\frac{1}{2})}+ p^{(-\frac{1}{2})}$, where  $p^{(\frac{1}{2})} \in \Gamma^{\frac{1}{2}}_{\frac{3}{2}}$ and $p^{(-\frac{1}{2})} \in \Gamma^{-\frac{1}{2}}_{\frac{1}{2}}$, then
\begin{equation*}
L^{\frac{1}{2}}T_cL^{\frac{1}{2}}T_p \sim T_m T_p \sim T_{\tilde{\mathfrak{p}}_1}   
\end{equation*}
where the symbol $\tilde{\mathfrak{p}}_1$ equals
\begin{equation*}
\tilde{\mathfrak{p}}_1 = mp^{(\frac{1}{2})}+ c\ell p^{(-\frac{1}{2})} -ic \partial_\xi \ell \partial_\alpha p^{(\frac{1}{2})} = c\ell p^{(\frac{1}{2})} -\frac{i}{2}\partial_\xi \ell \partial_\alpha c p^{(\frac{1}{2})}  -ic \partial_\xi \ell \partial_\alpha p^{(\frac{1}{2})}+ c\ell p^{(-\frac{1}{2})}. 
\end{equation*}

The leading terms and the sub-leading terms in two equivalence relations need to match.
Hence, we essentially have four equations to solve.
However, there are five symbols $c$, $p^{(\frac{1}{2})}$, $p^{(-\frac{1}{2})}$, $q^{(0)}$ and $q^{(-1)}$ to determine.
Without loss of generality, we can assume that the lower order symbol $q^{(-1)} = 0$ so that the other four unknowns are solvable using four equations.

For the first equivalence relation in \eqref{Equivalence}, we compute the  symbol of $iT_pT_\lambda \sim T_{\tilde{\mathfrak{q}}_2}$ using \eqref{SymbolicEquivalence}:
\begin{align*}
\tilde{\mathfrak{q}}_2 = &-p^{(\frac{1}{2})}(1-\bar{Y})(1+\W)\xi + i\partial_\xi p^{(\frac{1}{2})}((1-\bar{Y})(1+\W))_\alpha \xi\\
&+ i p^{(\frac{1}{2})}((1-\bar{Y})(1+\W))_\alpha - p^{(-\frac{1}{2})}(1-\bar{Y})(1+\W)\xi.
\end{align*}
Since $T_{\tilde{\mathfrak{q}}_1}\sim T_{\tilde{\mathfrak{q}}_2}$, comparing the leading order terms gives
\begin{equation}
 p^{(\frac{1}{2})} = -(1-Y)(1+\bar{\W})\xi^{-1}c\ell q^{(0)}. \label{PLeading}
\end{equation}
The lower order terms give the expression for $p^{(-\frac{1}{2})}$,
\begin{equation}
 \begin{aligned}
   p^{(-\frac{1}{2})} &=   (1+\bar{\W})(1-Y)\xi^{-1}\Big(\frac{i}{2}\partial_\xi \ell \partial_\alpha c q^{(0)}\\
   &+ic \partial_\xi \ell \partial_\alpha q^{(0)}
+ i\partial_\xi p^{(\frac{1}{2})}((1-\bar{Y})(1+\W))_\alpha \xi+ i p^{(\frac{1}{2})}((1-\bar{Y})(1+\W))_\alpha \Big).
\end{aligned}   \label{PSubleading}
\end{equation}

For the second equivalence relation in \eqref{Equivalence},  we compute the equivalence symbol of $iT_qT_k \sim T_{\tilde{\mathfrak{p}}_2}$ using \eqref{SymbolicEquivalence}:
\begin{equation*}
 \tilde{\mathfrak{p}}_2 = \sigma J^{-\frac{1}{2}}(1-Y)^2 \xi^2 q^{(0)} + 3i\sigma J^{-\frac{1}{2}}(1-Y)^3 \W_\alpha \xi q^{(0)} +gJ^{-\frac{1}{2}}(1-Y)^2q^{(0)}-\frac{\gamma^2}{4\xi}J^{-\frac{1}{2}}(1-Y)^2q^{(0)},
\end{equation*}
since $q^{(0)}$ is of order zero and is independent of $\xi$. 
Comparing the leading terms of $\tilde{\mathfrak{p}}_1$ and $\tilde{\mathfrak{p}}_2$, one gets
\begin{equation*}
    -c^2 \ell^2(1-Y)(1+\bar{\W})q^{(0)}\xi^{-1} = J^{-\frac{1}{2}}(1-Y)^2 q^{(0)}\left(\sigma \xi^2 +g -\frac{\gamma^2}{4\xi}\right), \quad \xi<0.
\end{equation*}
Due to the fact that $-\ell^2(\xi) = \sigma \xi^3 +g\xi -\frac{\gamma^2}{4}$ for $\xi<0$, one can simplify this equation and get
\begin{equation*}
 c = J^{-\frac{3}{4}}.
\end{equation*}
For the sub-leading terms of $\tilde{\mathfrak{p}}_1$ and $\tilde{\mathfrak{p}}_2$,
\begin{equation*}
-\frac{i}{2}\partial_\xi \ell \partial_\alpha c p^{(\frac{1}{2})}  -ic \partial_\xi \ell \partial_\alpha p^{(\frac{1}{2})}+ c\ell p^{(-\frac{1}{2})} \approx 3i\sigma J^{-\frac{1}{2}}(1-Y)^3 \W_\alpha \xi q^{(0)}.
\end{equation*}
Here for two symbols $a\approx b$ we mean the the leading terms of $a$ and $b$ are the same.

To simplify the computation, we use the approximation $\ell(\xi)\approx |\sigma \xi^3 |^{\frac{1}{2}}$, so that
\begin{equation*}
\partial_\xi \ell \approx \frac{3}{2}\sqrt{\sigma}\frac{\xi}{\sqrt{|\xi|}}, \quad \partial_\alpha c = -\frac{3}{4}J^{-\frac{3}{4}}[(1-\bar{Y})\bar{\W}_\alpha + (1-Y)\W_\alpha],
\end{equation*}
and 
\begin{equation*}
p^{(\frac{1}{2})} \approx -(1-Y)(1+\bar{\W})J^{-\frac{3}{4}}\sqrt{\sigma}\frac{\xi}{\sqrt{|\xi|}} q^{(0)}.
\end{equation*}
Using also the formula for $p^{(-\frac{1}{2})}$ \eqref{PSubleading}, we obtain that
\begin{equation*}
    q^{(0)} = J^{\frac{1}{4}}.
\end{equation*}
Finally, the symbols $p^{(\frac{1}{2})}$ and $p^{(-\frac{1}{2})}$ are given in \eqref{PLeading} and \eqref{PSubleading}.
\end{proof}

Having obtained the equivalence relations for the operators \eqref{Equivalence}, we can now perform the symmetrization for the paradifferential system \eqref{e:etaxi}.
\begin{proposition}
 Let $s>0$, and $(\eta, \zeta)$ solve the paradifferential system \eqref{e:etaxi}, then for $c, p, q$ defined in Proposition \ref{t:TwoEquivalence}, the new unknowns
 \begin{equation*}
     U: = T_p \eta, \quad V: = T_q \zeta
 \end{equation*}
solve the symmetrized paradifferential system
\begin{equation}
 \left\{
    \begin{array}{lr}
    T_{D_t} U  +i\frac{\gamma}{2}U  -iL^{\frac{1}{2}}T_cL^{\frac{1}{2}}V= G^\sharp  &\\
    T_{D_t} V  -iL^{\frac{1}{2}}T_cL^{\frac{1}{2}}U + i\frac{\gamma}{2}V  = K^\sharp, & 
             \end{array}
\right.  \label{e:UVSym}
\end{equation}
where the source terms $(G^\sharp, K^\sharp)$  satisfy the bound
\begin{equation*}
 \|(G^\sharp,K^\sharp) \|_{H^s \times H^s} \lesssim_\CalA \CalB \|(\W,R) \|_{\mathcal{H}^s}.
\end{equation*}
\end{proposition}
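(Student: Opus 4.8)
The plan is to apply the paramultiplier $T_p$ to the first equation of \eqref{e:etaxi} and $T_q$ to the second, and then to move $T_p$ (resp.\ $T_q$) inside each term. The scalar factor commutes, so $T_p(i\frac{\gamma}{2}\eta) = i\frac{\gamma}{2}U$ and $T_q(i\frac{\gamma}{2}\zeta) = i\frac{\gamma}{2}V$. For the para-material derivative I would write $T_pT_{D_t}\eta = T_{D_t}U + [T_p,T_{D_t}]\eta$ and likewise for $\zeta$. For the dispersive terms I would invoke the equivalence relations \eqref{Equivalence} of Proposition \ref{t:TwoEquivalence}: since $iT_pT_\lambda\sim L^{\frac{1}{2}}T_cL^{\frac{1}{2}}T_q$ and $iT_qT_k\sim L^{\frac{1}{2}}T_cL^{\frac{1}{2}}T_p$, one has $T_pT_\lambda\zeta = -iL^{\frac{1}{2}}T_cL^{\frac{1}{2}}V - i\mathcal{R}_1\zeta$ and $T_qT_k\eta = -iL^{\frac{1}{2}}T_cL^{\frac{1}{2}}U - i\mathcal{R}_2\eta$, where $\mathcal{R}_1 := iT_pT_\lambda - L^{\frac{1}{2}}T_cL^{\frac{1}{2}}T_q$ and $\mathcal{R}_2 := iT_qT_k - L^{\frac{1}{2}}T_cL^{\frac{1}{2}}T_p$. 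Recalling $U = T_p\eta$, $V = T_q\zeta$, this already reproduces the left-hand sides of \eqref{e:UVSym}, with
\begin{equation*}
G^\sharp = T_p\tilde G - [T_p,T_{D_t}]\eta + i\mathcal{R}_1\zeta, \qquad K^\sharp = T_q\tilde K - [T_q,T_{D_t}]\zeta + i\mathcal{R}_2\eta .
\end{equation*}
It then remains to check that $G^\sharp, K^\sharp \in H^s$ with the claimed bound.

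The step I expect to be the main obstacle is the control of the commutators $[T_p,T_{D_t}]$ and $[T_q,T_{D_t}]$. Splitting $T_{D_t} = \partial_t + T_{\ub}\partial_\alpha$, the $\partial_t$ part of $[T_p,T_{D_t}]$ equals $-T_{\partial_t p}$, while the $T_{\ub}\partial_\alpha$ part is, by the symbolic calculus \eqref{CompositionPara} with $\rho = \frac{3}{2}$, a paradifferential operator of order $\frac{1}{2}$ (with principal symbol the Poisson bracket of $i\ub\xi$ and $p$) plus a remainder of order $0$. The key point is that the coefficients of $p$ are built from $\W$, $\bar\W$ and their first $\alpha$-derivatives via smooth functions, together with the fixed multiplier $\ell(\xi)$; differentiating in $t$ and using the water wave system \eqref{e:WR} to replace $\partial_t\W$ by spatial quantities — the only dangerous term being $R_\alpha$, which is controlled in $C^{\epsilon}_*$ by $\CalB$, together with $\|\ub\|_{C^1_*}\lesssim_\CalA\CalB$ from \eqref{UbCOneStar} — shows that $\partial_t p$ lies in a symbol class with $M^{\frac{1}{2}}_0(\partial_t p)\lesssim_\CalA\CalB$. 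Hence $[T_p,T_{D_t}]$ maps $H^{s+\frac{1}{2}}$ into $H^s$ with norm $\lesssim_\CalA\CalB$, and since $\eta = \W\in H^{s+\frac{1}{2}}$ we obtain $\|[T_p,T_{D_t}]\eta\|_{H^s}\lesssim_\CalA\CalB\|(\W,R)\|_{\mathcal{H}^s}$. The same argument, now with $q = J^{1/4}$ of order $0$, shows that $[T_q,T_{D_t}]$ maps $H^s$ into $H^s$ with norm $\lesssim_\CalA\CalB$, so $\|[T_q,T_{D_t}]\zeta\|_{H^s}\lesssim_\CalA\CalB\|\zeta\|_{H^s}\lesssim_\CalA\CalB\|(\W,R)\|_{\mathcal{H}^s}$.

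The remaining pieces are bookkeeping with the mapping properties already recorded. Since $(\tilde G,\tilde K)$ obeys the source bound $\|(\tilde G,\tilde K)\|_{\mathcal{H}^s}\lesssim_\CalA\CalB\|(\W,R)\|_{\mathcal{H}^s}$ attached to \eqref{e:etaxi} (so $\tilde G\in H^{s+\frac{1}{2}}$, $\tilde K\in H^s$), and since $T_p$ has order $\le\frac{1}{2}$ while $T_q$ has order $0$, \eqref{TABound} gives $\|T_p\tilde G\|_{H^s} + \|T_q\tilde K\|_{H^s}\lesssim_\CalA\CalB\|(\W,R)\|_{\mathcal{H}^s}$. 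For the operator remainders, $iT_pT_\lambda$ and $L^{\frac{1}{2}}T_cL^{\frac{1}{2}}T_q$ are both of order $\frac{3}{2}$, so by the definition of $\sim$ the difference $\mathcal{R}_1$ is of order $0$ with $\|\mathcal{R}_1\|_{H^\mu\to H^\mu}\lesssim_\CalA\CalB$, whence $\|\mathcal{R}_1\zeta\|_{H^s}\lesssim_\CalA\CalB\|(\W,R)\|_{\mathcal{H}^s}$; on the other hand $iT_qT_k$ and $L^{\frac{1}{2}}T_cL^{\frac{1}{2}}T_p$ are both of order $2$ (the symbol $k$ carries the leading $\xi^2$), so $\mathcal{R}_2$ is of order $\frac{1}{2}$ with $\|\mathcal{R}_2\|_{H^\mu\to H^{\mu-\frac{1}{2}}}\lesssim_\CalA\CalB$, whence $\|\mathcal{R}_2\eta\|_{H^s} = \|\mathcal{R}_2\W\|_{H^s}\lesssim_\CalA\CalB\|\W\|_{H^{s+\frac{1}{2}}}\lesssim_\CalA\CalB\|(\W,R)\|_{\mathcal{H}^s}$; note that the half-derivative loss in $\mathcal{R}_2$ is precisely compensated by $\W$ being measured half a derivative above $\zeta$ in $\mathcal{H}^s$, which is exactly why the unknowns are chosen asymmetrically. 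Summing, $\|(G^\sharp,K^\sharp)\|_{H^s\times H^s}\lesssim_\CalA\CalB\|(\W,R)\|_{\mathcal{H}^s}$, as desired. Everything past the commutator estimate is routine manipulation with the paradifferential calculus of Section \ref{s:Norms} and the symbolic computation already performed for Proposition \ref{t:TwoEquivalence}.
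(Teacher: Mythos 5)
Your decomposition of the source terms — $T_p\tilde G$ / $T_q\tilde K$, the commutators $[T_p,T_{D_t}]\eta$ / $[T_q,T_{D_t}]\zeta$, and the operator remainders $\mathcal{R}_1 = iT_pT_\lambda - L^{\frac{1}{2}}T_cL^{\frac{1}{2}}T_q$, $\mathcal{R}_2 = iT_qT_k - L^{\frac{1}{2}}T_cL^{\frac{1}{2}}T_p$ — coincides exactly with the paper's, once you expand $[T_p,T_{D_t}] = -T_{\partial_t p} + [T_p,T_\ub\partial_\alpha]$. Your estimates for $T_p\tilde G$, $T_q\tilde K$, for the $\ub$-commutators via \eqref{CompositionPara}, and for $\mathcal{R}_1,\mathcal{R}_2$ via Proposition \ref{t:TwoEquivalence} are all correct and match the paper's treatment (including the observation that the half-derivative loss in $\mathcal{R}_2$ is absorbed by $\W$ living in $H^{s+\frac{1}{2}}$).

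There is, however, a genuine gap in your treatment of $T_{\partial_t p}$: the claim that $M^{\frac{1}{2}}_0(\partial_t p)\lesssim_\CalA\CalB$ is false for the subprincipal piece $p^{(-\frac{1}{2})}$. As is visible from \eqref{PSubleading}, $p^{(-\frac{1}{2})}$ depends not only on $\W,\bar\W$ but also on $\W_\alpha,\bar\W_\alpha$ (through $\partial_\alpha c$, $\partial_\alpha q^{(0)}$ and $\left((1-\bar Y)(1+\W)\right)_\alpha$). Hence $\partial_t p^{(-\frac{1}{2})}$ carries a factor $\partial_t\W_\alpha$, and the water wave system controls $\partial_t\W_\alpha$ only in $C^{-1}_*$ (the worst contribution being at the level of $R_{\alpha\alpha}\in C^{-1+\epsilon}_*$), not in $L^\infty$. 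Therefore $\partial_t p^{(-\frac{1}{2})}$ belongs to $\Gamma^{-\frac{1}{2}}_{-1}$ rather than to $\Gamma^{-\frac{1}{2}}_0$, and $\partial_t p$ as a whole is \emph{not} a symbol in $\Gamma^{\frac{1}{2}}_0$. The paper resolves this by splitting $\partial_t p=\partial_t p^{(\frac{1}{2})}+\partial_t p^{(-\frac{1}{2})}$: the principal piece, depending only on $\W,\bar\W$, satisfies $M^{\frac{1}{2}}_0(\partial_t p^{(\frac{1}{2})})\lesssim_\CalA\CalB$ using $\|\partial_t\W\|_{L^\infty}\lesssim_\CalA\CalB$, while the subprincipal piece is handled with Lemma \ref{t:NegativeRhoCal} (the $\rho<0$ calculus), which shows $T_{\partial_t p^{(-\frac{1}{2})}}$ is still of order $-\frac{1}{2}-(-1)=\frac{1}{2}$, with the seminorm $M^{-\frac{1}{2}}_{-1}(\partial_t p^{(-\frac{1}{2})})\lesssim_\CalA\CalB$ obtained via the negative-regularity product estimate \eqref{CNegativeAlpha}. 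Your final operator bound on $[T_p,T_{D_t}]$ is correct, but the symbol-class claim you use to justify it is not, and the paper's finer decomposition is genuinely needed at this point.
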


\begin{proof}
Applying $T_p$ to the first equation of the system \eqref{e:etaxi} and $T_q$ to the second equation of the system \eqref{e:etaxi}, the new unknowns $(U ,V)$ solve the paradifferential system
\begin{equation*}
 \left\{
    \begin{array}{lr}
    T_{D_t} U  +i\frac{\gamma}{2}U  -iL^{\frac{1}{2}}T_cL^{\frac{1}{2}}V= T_p \tilde{G} + [T_\ub\partial_\alpha, T_p]\eta + i(iT_pT_\lambda - L^{\frac{1}{2}}T_cL^{\frac{1}{2}} T_q) \zeta + T_{\partial_t p} \eta &\\
    T_{D_t} V  -iL^{\frac{1}{2}}T_cL^{\frac{1}{2}}U + i\frac{\gamma}{2}V  = T_q \tilde{K} +[T_\ub\partial_\alpha, T_q]\zeta +i(iT_qT_k - L^{\frac{1}{2}}T_cL^{\frac{1}{2}} T_p) \eta + T_{\partial_t q} \zeta. & 
             \end{array}
\right.  
\end{equation*}
For the first terms on the right-hand side of the above system,
\begin{equation*}
\|T_p \tilde{G}\|_{H^s} + \| T_q \tilde{K}\|_{H^s} \lesssim \|\tilde{G}\|_{H^{s+\frac{1}{2}}} + \|\tilde{K}\|_{H^s} \lesssim_\CalA \CalB \|(\W,R) \|_{\mathcal{H}^s}.
\end{equation*}
For the commutator terms, using the composition rule \eqref{CompositionPara} in paradifferential symbolic calculus,
\begin{equation*}
\|[T_\ub\partial_\alpha, T_p]\eta\|_{H^s}+ \|[T_\ub\partial_\alpha, T_q]\zeta  \|_{H^s}\lesssim_\CalA \CalB \|(\eta, \zeta) \|_{\mathcal{H}^s} \lesssim_\CalA \CalB \|(\W,R) \|_{\mathcal{H}^s}.
\end{equation*}
Then owing to the equivalence relations \eqref{Equivalence} which were established in Proposition \ref{t:TwoEquivalence},
$iT_pT_\lambda - L^{\frac{1}{2}}T_cL^{\frac{1}{2}} T_q$ is of order $0$, and $iT_qT_k - L^{\frac{1}{2}}T_cL^{\frac{1}{2}} T_p$ is of order $\frac{1}{2}$ so that
\begin{equation*}
\|(iT_pT_\lambda - L^{\frac{1}{2}}T_cL^{\frac{1}{2}} T_q) \zeta \|_{H^s} + \|(iT_qT_k - L^{\frac{1}{2}}T_cL^{\frac{1}{2}} T_p) \eta \|_{H^s} \lesssim_\CalA \CalB \|(\W,R) \|_{\mathcal{H}^s}.
\end{equation*}
It suffices to estimate the last terms on the right-hand side of the above system, which is further reduced to showing the operator bounds
\begin{equation*}
    \|T_{\partial_t p} \|_{H^{s+\frac{1}{2}} \rightarrow H^s} +  \|T_{\partial_t q} \|_{H^s \rightarrow H^s} \lesssim_\CalA \CalB.
\end{equation*}
Note that $p$ and $q$ both depend on $\W$ and $\bar{\W}$. 
Recall from the paradifferential equation \eqref{WParaMat}, $\|\partial_t \W\|_{L^\infty} \lesssim_\CalA \CalB$.
Hence,
\begin{equation*}
M^{\frac{1}{2}}_0(\partial_t p^{(\frac{1}{2})})+ M_0^0(\partial_t q)\lesssim_\CalA \CalB.   
\end{equation*}
To estimate $\| T_{\partial_t p^{(-\frac{1}{2})}}\|_{H^{s+\frac{1}{2}}\rightarrow H^s}$, we apply Lemma \ref{t:NegativeRhoCal}.
It follows from the expression \eqref{PSubleading} that
\begin{align*}
\partial_t p^{(-\frac{1}{2})} = \sum_k& F_k^1(\W, \bar{\W}, \W_\alpha, \bar{\W}_\alpha, \xi)\partial_t \W_\alpha + F_k^2(\W, \bar{\W}, \W_\alpha, \bar{\W}_\alpha,
\xi)\partial_t \W \\
+&  F_k^3(\W, \bar{\W}, \W_\alpha, \bar{\W}_\alpha, \xi)\partial_t \bar{\W}_\alpha + F_k^4(\W, \bar{\W}, \W_\alpha, \bar{\W}_\alpha, \xi)\partial_t \bar{\W},
\end{align*}
for some functions $F_k^i$, $i=1,2,3,4$. 
Since $\| \partial_t \W_\alpha\|_{C^{-1}_*} \lesssim_\CalA \CalB$, and $M_0^{-\frac{1}{2}}(F_k^i)\lesssim_\CalA 1$, by using the product estimate \eqref{CNegativeAlpha},
\begin{equation*}
    M^{-\frac{1}{2}}_{-1}(\partial_t p^{(-\frac{1}{2})}) \lesssim_\CalA \CalB.
\end{equation*}
Therefore, we get
\begin{equation*}
    \|T_{\partial_t p}\eta\|_{H^s}+ \|T_{\partial_t q}\zeta \|_{H^s}\lesssim_\CalA \CalB \| (\W, R)\|_{H^s}.
\end{equation*}
Finally, collecting all source terms on the right-hand side into $(G^\sharp, K^\sharp)$,
\begin{align*}
&G^\sharp = T_p \tilde{G} + [T_\ub\partial_\alpha, T_p]\eta + i(iT_pT_\lambda - L^{\frac{1}{2}}T_cL^{\frac{1}{2}} T_q) \zeta + T_{\partial_t p} \eta, \\
&K^\sharp = T_q \tilde{K} +[T_\ub\partial_\alpha, T_q]\zeta +i(iT_qT_k - L^{\frac{1}{2}}T_cL^{\frac{1}{2}} T_p) \eta + T_{\partial_t q} \zeta,
\end{align*}
using the estimates above, then $(G^\sharp, K^\sharp)$ satisfy the desired bounds.
\end{proof}

At the end of this section, we rewrite the symmetrized paradifferential system \eqref{e:UVSym} as a single paradifferential equation.
Taking the imaginary part of the first equation in \eqref{e:UVSym} and the real part of the second
equation in \eqref{e:UVSym}, using Lemma \ref{t:RealIm} and the fact that $\ub$ and $c$ are real-valued functions, one obtains the system
\begin{equation*}
 \left\{
    \begin{array}{lr}
    T_{D_t} \Im U  +\frac{\gamma}{2} \Re U -L^{\frac{1}{2}}T_cL^{\frac{1}{2}} \Re  V = \Im G^\sharp  &\\
    T_{D_t} \Re V   + L^{\frac{1}{2}}T_cL^{\frac{1}{2}} \Im U -\frac{\gamma}{2}\Im V = \Re K^\sharp. & 
             \end{array}
\right. 
\end{equation*}
Since $U$ and $V$ are both holomorphic functions, it follows that their real parts and imaginary parts can be related using the Hilbert transform:
\begin{equation*}
    \Re U = H \Im U, \quad \Im V = -H \Re V. 
\end{equation*}
This system can be further reduced to a single paradifferential equation by introducing the complex-valued unknown $\Phi = \Re V + i\Im U$.
As a consequence, we rewrite above system using $\Re V$ and $\Im U$ as
\begin{equation*}
 \left\{
    \begin{array}{lr}
    T_{D_t} \Im U  +\frac{\gamma}{2}H \Im U -L^{\frac{1}{2}}T_cL^{\frac{1}{2}} \Re  V = \Im G^\sharp  &\\
    T_{D_t} \Re V   + L^{\frac{1}{2}}T_cL^{\frac{1}{2}} \Im U + \frac{\gamma}{2}H\Re V = \Re K^\sharp. & 
             \end{array}
\right. 
\end{equation*}
To sum up, we arrive at the final reduction result for the water wave system in this section.
\begin{proposition} \label{t:SingleParaEqn}
Let $s>0$, and $(\W, R)$ solve the paradifferential water wave system \eqref{e:ParaWater}. 
Let $p,q,\ell,c$ be defined in Proposition \ref{t:TwoEquivalence}, then the unknown
\begin{equation*}
    \Phi : = \Re T_q\left(R-i\frac{\gamma}{2}W\right) + i\Im T_p \W
\end{equation*}
solves the paradifferential equation
\begin{equation}
    T_{D_t}\Phi -i\left(L^{\frac{1}{2}}T_c L^{\frac{1}{2}} +\frac{\gamma}{2}\text{sgn}(D) \right)\Phi  = F, \label{e:SinglePara}
\end{equation}
where the source term $F: = \Im G^\sharp + i\Re K^\sharp$ satisfies
\begin{equation}
    \|F\|_{H^s} \lesssim_\CalA \CalB \| (\W, R)\|_{\mathcal{H}^s}. \label{FHsBound}
\end{equation}
\end{proposition}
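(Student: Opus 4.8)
The plan is to formalize the computation already outlined in the paragraph preceding the statement, turning the symmetrized system \eqref{e:UVSym} for $(U,V)=(T_p\W,\,T_q\zeta)$, with $\zeta=R-i\frac{\gamma}{2}W$, into the scalar equation \eqref{e:SinglePara}. First I would take the imaginary part of the first equation of \eqref{e:UVSym} and the real part of the second. Since $\ub$ is real-valued, $T_{D_t}=\partial_t+T_\ub\partial_\alpha$ preserves real-valued functions; since $c=J^{-\frac34}$ is real and $\ell$ is a real even symbol, $L^{\frac12}T_cL^{\frac12}$ has a real even symbol and therefore commutes with both $\Re$ and $\Im$ by Lemma \ref{t:RealIm}. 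Using $\Im(iz)=\Re z$ and $\Re(iz)=-\Im z$, this yields
\begin{equation*}
\left\{\begin{array}{l}
T_{D_t}\Im U + \frac{\gamma}{2}\Re U - L^{\frac12}T_cL^{\frac12}\Re V = \Im G^\sharp,\\
T_{D_t}\Re V + L^{\frac12}T_cL^{\frac12}\Im U - \frac{\gamma}{2}\Im V = \Re K^\sharp.
\end{array}\right.
\end{equation*}

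Next I would use that $U$ and $V$ are holomorphic, being paradifferential images of the holomorphic functions $\W$ and $\zeta$ (for a suitable choice of the quantization cutoffs $\epsilon_1,\epsilon_2$ this is exact; otherwise the non-holomorphic defect is smoothing and harmless). For a holomorphic $f$ one has $Hf=if$, hence $\Re U=H\Im U$ and $\Im V=-H\Re V$. Substituting replaces $\frac{\gamma}{2}\Re U$ by $\frac{\gamma}{2}H\Im U$ and $-\frac{\gamma}{2}\Im V$ by $\frac{\gamma}{2}H\Re V$, so the system now involves only $\Im U$ and $\Re V$. Setting $\Phi=\Re V+i\Im U$ and adding $i$ times the first (modified) equation to the second, the off-diagonal terms combine as $L^{\frac12}T_cL^{\frac12}(\Im U-i\Re V)=-iL^{\frac12}T_cL^{\frac12}\Phi$ and the Hilbert terms combine as $\frac{\gamma}{2}H\Phi=-\frac{i\gamma}{2}\,\text{sgn}(D)\Phi$; thus the left-hand side is exactly $T_{D_t}\Phi-i\big(L^{\frac12}T_cL^{\frac12}+\frac{\gamma}{2}\text{sgn}(D)\big)\Phi$, and the right-hand side is $F=\Im G^\sharp+i\Re K^\sharp$.

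For the source bound \eqref{FHsBound}, I would simply note that $\Re$ and $\Im$ are bounded on $H^s$, so $\|F\|_{H^s}\lesssim\|G^\sharp\|_{H^s}+\|K^\sharp\|_{H^s}$, and the required estimate follows directly from the bound $\|(G^\sharp,K^\sharp)\|_{H^s\times H^s}\lesssim_\CalA\CalB\|(\W,R)\|_{\mathcal{H}^s}$ established for the symmetrized system.

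I do not expect a serious obstacle here: the argument is essentially careful bookkeeping of real and imaginary parts together with two already-available facts. The only points that need genuine care are (i) verifying that $L^{\frac12}T_cL^{\frac12}$ really commutes with $\Re$ and $\Im$, which is a clean application of Lemma \ref{t:RealIm} using that $\ell(\xi)$ is even and $c$ is real, and (ii) the holomorphy of $U=T_p\W$ and $V=T_q\zeta$; this is where the only friction lies, and I would dispose of it either by fixing the cutoffs $\epsilon_1,\epsilon_2$ small enough that $T_p,T_q$ map holomorphic functions to holomorphic functions, or by absorbing the smoothing holomorphy defect into $F$ without affecting \eqref{FHsBound}.
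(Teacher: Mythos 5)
Your proposal reproduces the paper's own derivation: the proposition is stated without a separate proof because the manipulation (take $\Im$ of the first equation of \eqref{e:UVSym}, $\Re$ of the second, apply Lemma \ref{t:RealIm} to commute $\Re,\Im$ past $T_{D_t}$ and $L^{1/2}T_cL^{1/2}$, use $\Re U = H\Im U$, $\Im V = -H\Re V$, combine into $\Phi = \Re V + i\Im U$, and convert $H$ to $-i\,\mathrm{sgn}(D)$) is carried out verbatim in the paragraph immediately before the statement. Your observation that the holomorphy of $U=T_p\W$ and $V=T_q\zeta$ is not exact for general quantization cutoffs is a legitimate point the paper glosses over, but as you note the defect lives at low frequencies and is harmless; one small bookkeeping remark is that ``second equation $+\,i\cdot$ first'' gives right-hand side $\Re K^\sharp + i\Im G^\sharp$ rather than the paper's (and your) $\Im G^\sharp + i\Re K^\sharp$ — an inconsequential typo since both satisfy \eqref{FHsBound}.
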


\section{A priori energy estimate and the Strichartz estimate} \label{s:Estimate}
In this section, we first prove an a priori energy estimate Theorem \ref{t:EnergyEstimate} for the paradifferential equation \eqref{e:SinglePara}, then we use the results in Nguyen \cite{MR3724757} to establish the Strichartz estimate for the paradifferential equation \eqref{e:SinglePara}.

\subsection{Energy estimate} \label{s:Energy}
To obtain an $H^s$ estimate, we apply an elliptic operator of order $s$ to the paradifferential equation \eqref{e:SinglePara}.
To avoid the issue of the commutator with the dispersive term, here we do not use the usual elliptic differential operator $(1+D^2)^{\frac{s}{2}}$.
Instead, we apply the elliptic operator $T_\wp$ of order $s$ as in \cite{MR2805065, MR3770970}, where $\wp = (c\ell )^{\frac{2s}{3}}$, and $c\ell$ is the principle symbol  of the operator $L^{\frac{1}{2}}cL^{\frac{1}{2}}$.

Taking $\varphi := T_\wp \Phi$, then by \eqref{TABound}, 
\begin{equation}
    \|\varphi\|_{L^2}\lesssim \| \Phi \|_{H^s} \lesssim_\CalA \|(\W, R) \|_{\mathcal{H}^s}. \label{varphiLTwo}
\end{equation}
On the other hand, using Proposition $3.13$ in \cite{MR3770970}, for any  constant $\mu\geq 0$,
\begin{equation} 
\begin{aligned}
\|\Phi \|_{H^s} \lesssim \|(\W, R) \|_{\mathcal{H}^s} &\lesssim \|(\eta, \zeta) \|_{H^s} \lesssim_\CalA \|T_\wp T_p \eta \|_{L^2} + \|T_\wp T_q \zeta  \|_{L^2} + \| (\eta, \zeta)\|_{\mathcal{H}^{-\mu}} \\
&\lesssim_\CalA \|\varphi\|_{L^2} + \| (\eta, \zeta)\|_{\mathcal{H}^{-\mu}} \lesssim_\CalA \|\varphi\|_{L^2} + \| (\W, R)\|_{\mathcal{H}^{-\mu}}.   
\end{aligned} \label{WRMuMinus}
\end{equation}
The unknown $\varphi$ solves the equation
\begin{equation}
 T_{D_t}\varphi -i\left(L^{\frac{1}{2}}T_c L^{\frac{1}{2}} +\frac{\gamma}{2}\text{sgn}(D) \right)\varphi  = T_\wp F+ \tilde{f}, \label{e:SingleParaSRegularity}
\end{equation}
where $\tilde{f}$ is given by
\begin{equation*}
\tilde{f} = T_{\partial_t \wp}\Phi + [T_\ub \partial_\alpha, T_\wp]\Phi - i\left[ L^{\frac{1}{2}}T_c L^{\frac{1}{2}} +\frac{\gamma}{2}\text{sgn}(D) , T_\wp\right ]\Phi.    
\end{equation*}
We can derive the following a priori energy estimate for the equation \eqref{e:SingleParaSRegularity}.
\begin{lemma}
Suppose  $s>0$,  $\Phi$ solves the paradifferential equation \eqref{e:SinglePara}.
Then for any $\mu \geq 0$, $\varphi = T_\wp \Phi$ with $\wp = (c\ell )^{\frac{2s}{3}}$ satisfies the a priori energy estimate
\begin{equation}
    \frac{d}{dt}\| \varphi\|_{L^2}^2 \lesssim_\CalA (1+\CalB) \left(\|(\W, R) \|_{\mathcal{H}^{-\mu}} + \|\varphi\|_{L^2}\right) \|\varphi\|_{L^2}. \label{VarphiTimeDerivative}
\end{equation}
\end{lemma}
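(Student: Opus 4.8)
The plan is to run the standard $L^2$ energy argument directly on \eqref{e:SingleParaSRegularity}. Write $(f,g)_{L^2}:=\int f\bar g\,d\alpha$ for the complex pairing and $\mathcal{L}:=L^{\frac12}T_cL^{\frac12}+\frac{\gamma}{2}\text{sgn}(D)$ for the dispersive operator. Since this is an a priori bound we may assume enough regularity in $t$, so that $\frac{d}{dt}\|\varphi\|_{L^2}^2=2\Re(\partial_t\varphi,\varphi)_{L^2}$; substituting $\partial_t\varphi=-T_\ub\partial_\alpha\varphi+i\mathcal{L}\varphi+T_\wp F+\tilde f$ splits the right-hand side into a transport term $-2\Re(T_\ub\partial_\alpha\varphi,\varphi)_{L^2}$, a dispersive term $2\Re(i\mathcal{L}\varphi,\varphi)_{L^2}$, and a source term $2\Re(T_\wp F+\tilde f,\varphi)_{L^2}$. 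It suffices to show that the first two are $\lesssim_\CalA\CalB\|\varphi\|_{L^2}^2$ and the last is $\lesssim_\CalA(1+\CalB)\big(\|(\W,R)\|_{\mathcal{H}^{-\mu}}+\|\varphi\|_{L^2}\big)\|\varphi\|_{L^2}$; then \eqref{VarphiTimeDerivative} follows.

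For the transport term we would use $T_\ub\partial_\alpha+(T_\ub\partial_\alpha)^{*}=-T_{\ub_\alpha}-\partial_\alpha\big((T_\ub)^{*}-T_\ub\big)$. Since $\ub$ is real-valued with $\|\ub\|_{C^1_{*}}\lesssim_\CalA\CalB$ by \eqref{UbCOneStar}, the first term is bounded on $L^2$ with norm $\lesssim_\CalA\CalB$ by \eqref{HsLinfty}, and $(T_\ub)^{*}-T_\ub$ is of order $-1$ with norm $\lesssim_\CalA\CalB$ by \eqref{AdjointBound}, so $\partial_\alpha\big((T_\ub)^{*}-T_\ub\big)$ is of order $0$ with norm $\lesssim_\CalA\CalB$; hence $\Re(T_\ub\partial_\alpha\varphi,\varphi)_{L^2}\lesssim_\CalA\CalB\|\varphi\|_{L^2}^2$. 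For the dispersive term we would observe that $\text{sgn}(D)$ and $L^{\frac12}$ are self-adjoint Fourier multipliers (real, even symbols), while $c=J^{-\frac34}$ is real with $\|c\|_{C^{\frac32}_{*}}\lesssim_\CalA\CalB$ (Moser type estimate \eqref{MoserTwo}), so by \eqref{AdjointBound} with $\rho=\frac32$ the operator $(T_c)^{*}-T_c$ is of order $-\frac32$ with norm $\lesssim_\CalA\CalB$; therefore $\mathcal{L}-\mathcal{L}^{*}=L^{\frac12}\big(T_c-(T_c)^{*}\big)L^{\frac12}$ is of order $0$ with norm $\lesssim_\CalA\CalB$, and splitting $\mathcal{L}$ into its self-adjoint and anti-self-adjoint parts gives $\Re(i\mathcal{L}\varphi,\varphi)_{L^2}=\Re\big(i\tfrac12(\mathcal{L}-\mathcal{L}^{*})\varphi,\varphi\big)_{L^2}\lesssim_\CalA\CalB\|\varphi\|_{L^2}^2$, the self-adjoint part contributing nothing.

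The source term we would handle by Cauchy--Schwarz, $2\Re(T_\wp F+\tilde f,\varphi)_{L^2}\le2\big(\|T_\wp F\|_{L^2}+\|\tilde f\|_{L^2}\big)\|\varphi\|_{L^2}$. Since $\wp=(c\ell)^{\frac{2s}{3}}$ is a symbol of order $s$ with $M^s_0(\wp)\lesssim_\CalA1$, \eqref{TABound} and \eqref{FHsBound} give $\|T_\wp F\|_{L^2}\lesssim_\CalA\CalB\|(\W,R)\|_{\mathcal{H}^s}$. For $\tilde f=T_{\partial_t\wp}\Phi+[T_\ub\partial_\alpha,T_\wp]\Phi-i[\mathcal{L},T_\wp]\Phi$ we estimate the three pieces separately: $\partial_t\wp$ depends on $\partial_t\W,\partial_t\bar\W$ through $\partial_t J$, and $\|\partial_t\W\|_{L^\infty}\lesssim_\CalA\CalB$ from \eqref{WParaMat}, so $M^s_0(\partial_t\wp)\lesssim_\CalA\CalB$ and $\|T_{\partial_t\wp}\Phi\|_{L^2}\lesssim_\CalA\CalB\|\Phi\|_{H^s}$; the commutator $[T_\ub\partial_\alpha,T_\wp]=T_\ub T_{\partial_\alpha\wp}+[T_\ub,T_\wp]\partial_\alpha$ is of order $s$ with norm $\lesssim_\CalA1+\CalB$ by \eqref{CompositionPara} (the first piece is $\lesssim_\CalA1$ since $\|\W_\alpha\|_{L^\infty}\lesssim\CalA$, the second $\lesssim_\CalA\CalB$ since $M^0_1(\ub)\lesssim_\CalA\CalB$); and $[\mathcal{L},T_\wp]$ maps $H^s$ to $L^2$ with norm $\lesssim_\CalA\CalB$, the crucial gain being that $\wp=(c\ell)^{\frac{2s}{3}}$ is a function of the principal symbol $c\ell$ of $L^{\frac12}T_cL^{\frac12}$, so $\{c\ell,\wp\}=0$ and the would-be leading term in \eqref{CompositionPara} cancels (the $\text{sgn}(D)$ part of $\mathcal{L}$ contributes only a smoothing, hence harmless, commutator). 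Collecting these and using \eqref{WRMuMinus}, i.e.\ $\|\Phi\|_{H^s}\lesssim\|(\W,R)\|_{\mathcal{H}^s}\lesssim_\CalA\|\varphi\|_{L^2}+\|(\W,R)\|_{\mathcal{H}^{-\mu}}$, finishes the source estimate and hence \eqref{VarphiTimeDerivative}. The main obstacle is precisely the commutator $[\mathcal{L},T_\wp]$: one must exploit that the chosen weight Poisson-commutes with the dispersive principal symbol so this commutator gains an extra half-order, and carry out the regularity bookkeeping in \eqref{CompositionPara} — falling back on the negative-order calculus of Lemma \ref{t:NegativeRhoCal} where the remaining symbols sit in negative Zygmund classes — carefully enough to keep every remainder within $\lesssim_\CalA(1+\CalB)\|\Phi\|_{H^s}$.
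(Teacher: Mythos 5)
Your proposal is correct and follows essentially the same route as the paper: compute $\frac{d}{dt}\|\varphi\|_{L^2}^2 = 2\Re\langle\varphi_t,\varphi\rangle$ from \eqref{e:SingleParaSRegularity}, kill the leading transport and dispersive contributions by symmetrization (using \eqref{AdjointBound} for $(T_\ub)^*-T_\ub$ and $(T_c)^*-T_c$, and the self-adjointness of $L^{1/2}$ and $\text{sgn}(D)$), then bound $T_\wp F$ and the three pieces of $\tilde f$ in $L^2$, crucially exploiting that $\wp$ is a function of $c\ell$ so the Poisson bracket in the dispersive commutator vanishes. The only differences are presentational (you spell out the adjoint rearrangement $T_\ub\partial_\alpha+(T_\ub\partial_\alpha)^*=-T_{\ub_\alpha}-\partial_\alpha((T_\ub)^*-T_\ub)$ more explicitly), but every step matches the paper's argument.
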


\begin{proof}
$\varphi$ solves the equation \eqref{e:SingleParaSRegularity}, and we estimate the right-hand side source terms in $L^2$.
Using \eqref{FHsBound} and \eqref{WRMuMinus}, we get
\begin{equation*}
\|T_\wp F \|_{L^2} \lesssim_\CalA \CalB \|(\W, R)\|_{\mathcal{H}^s} \lesssim_\CalA \CalB \left(\|\varphi\|_{L^2} + \| (\W, R)\|_{\mathcal{H}^{-\mu}}\right).
\end{equation*}
For the first term of $\tilde{f}$, we note that $\partial_t \wp = F_1(\W, \bar{\W}, \xi) \partial_t \W + F_2(\W, \bar{\W}, \xi) \partial_t \bar{\W}$ for some functions $F_1, F_2$.
Since $\|\partial_t \W\|_{L^\infty} \lesssim_\CalA \CalB$, it follows that $T_{\partial_t \wp}$ is of order $s$, and
\begin{equation*}
\|T_{\partial_t \wp}\Phi \|_{L^2} \lesssim_\CalA \CalB \| \Phi\|_{H^s}.
\end{equation*}
Next, we estimate two commutator terms in $\tilde{f}$.
For the commutator with the transport term, we use the symbolic calculus \eqref{CompositionPara} with $\rho =1$,
\begin{equation*}
    \| [T_\ub \partial_\alpha, T_\wp]\Phi\|_{L^2} \lesssim_\CalA (1+\CalB ) \|\Phi \|_{H^s}.
\end{equation*}
As for the commutator with the dispersive term, since $\wp$ is some power of $c\ell$, $\partial_\xi \wp \partial_\alpha (c\ell) = \partial_\alpha \wp \partial_\xi (c\ell)$.
Using again the symbolic calculus \eqref{CompositionPara} with $\rho = \frac{3}{2}$, the leading term of the commutator $[L^{\frac{1}{2}}T_c L^{\frac{1}{2}}, T_\wp]\Phi$ vanishes, and
\begin{equation*}
\left\| i\left[ L^{\frac{1}{2}}T_c L^{\frac{1}{2}} +\frac{\gamma}{2}\text{sgn}(D) , T_\wp\right ]\Phi \right\|_{L^2} \lesssim_\CalA (1+\CalB ) \|\Phi \|_{H^s}.
\end{equation*}
Putting these estimates together, and using \eqref{TABound}, \eqref{WRMuMinus} we obtain the estimate
\begin{equation*}
\|\tilde{f} \|_{L^2} \lesssim_\CalA (1+\CalB ) \| \Phi\|_{H^s}\lesssim_\CalA  (1+ \CalB ) \left(\|\varphi\|_{L^2}+ \| (\W, R)\|_{\mathcal{H}^{-\mu}}\right).
\end{equation*}
Then we compute 
\begin{equation*}
\frac{d}{dt}\|\varphi \|^2_{L^2} = 2\Re \langle \varphi_t, \varphi\rangle = \langle (\partial_\alpha (T_\ub)^*  - T_\ub \partial_\alpha) \varphi + i L^{\frac{1}{2}}(T_c - (T_c)^*) L^{\frac{1}{2}} \varphi, \varphi \rangle + 2\Re \langle T_\wp F+ \tilde{f}, \varphi \rangle .
\end{equation*}
Using the paradifferential calculus for adjoint operator \eqref{AdjointBound}, 
\begin{align*}
\frac{d}{dt}\|\varphi \|^2_{L^2} &\lesssim \| (\partial_\alpha (T_\ub)^*  - T_\ub \partial_\alpha) \varphi + i L^{\frac{1}{2}}(T_c - (T_c)^*) L^{\frac{1}{2}} \varphi\|_{L^2} \| \varphi\|_{L^2} + (\|T_\wp F \|_{L^2} + \|\tilde{f}\|_{L^2}) \|\varphi\|_{L^2}\\
& \lesssim_\CalA (1+ \CalB ) \left(\|(\W, R) \|_{\mathcal{H}^{-\mu}} + \|\varphi\|_{L^2}\right) \|\varphi\|_{L^2}.
\end{align*}
This gives the a priori energy estimate for $\varphi$.
\end{proof}
Having obtained the a priori energy estimate for $\varphi$, we can now deduce the energy estimate for $(\W, R)$ and finish the proof of Therorem \ref{t:EnergyEstimate}.

\begin{proof}[Proof of Therorem \ref{t:EnergyEstimate}]
Integrating the a priori energy estimate inequality \eqref{VarphiTimeDerivative}, and using \eqref{varphiLTwo}, \eqref{WRMuMinus}, we get the Sobolev bound
\begin{align*}
    &\|(\W, R)(t) \|_{\mathcal{H}^s}^2 \lesssim_\CalA \|(\W, R) (t)\|^2_{\mathcal{H}^{-\mu}} + \| \varphi(t)\|_{L^2} \\
    \lesssim_\CalA & \|(\W, R) (t)\|^2_{\mathcal{H}^{-\mu}} +  \|(\W, R)(0) \|_{\mathcal{H}^s}^2 + \int_0^t (1+ \CalB(\tau))  \|(\W, R)(\tau)  \|_{\mathcal{H}^s}^2  d\tau.
\end{align*}
Note that by choosing appropriate $\mu \geq 0$, one can obtain the energy estimate directly for the system \eqref{e:WR},
\begin{equation*}
    \frac{d}{dt}\|(\W, R) (t)\|^2_{\mathcal{H}^{-\mu}} \lesssim_\CalA (1+\CalB ) \|(\W, R) (t)\|^2_{\mathcal{H}^{s}},
\end{equation*}
so that integrating it gives
\begin{equation*}
 \|(\W, R) (t)\|^2_{\mathcal{H}^{-\mu}} \lesssim_\CalA \|(\W, R) (0)\|^2_{\mathcal{H}^{-\mu}}     + \int_0^t (1+ \CalB(\tau) ) \|(\W, R)(\tau)  \|_{\mathcal{H}^s}^2  d\tau.
\end{equation*}
Adding this energy bound to the previous inequality, 
\begin{equation*}
 \|(\W, R)(t) \|_{\mathcal{H}^s}^2 \lesssim_\CalA   \|(\W, R)(0) \|_{\mathcal{H}^s}^2 + \int_0^t (1 +\CalB(\tau) )  \|(\W, R)(\tau)  \|_{\mathcal{H}^s}^2  d\tau, 
\end{equation*}
which is exactly the integral version of the energy estimate \eqref{EstimateWREnergy}.
\end{proof}

\subsection{The Strichartz estimate} \label{s:Strichartz}
To apply the Strichartz estimate for the paradifferential equation \eqref{e:SinglePara}, we first use the technique of \textit{para-composition}  of Alinhac \cite{MR0814548} to rewrite the equation, so that its principle dispersive part becomes constant-coefficient after the transformation.
In other words, we seek  an operator $\kappa^*$ such that $\tilde{\Phi}: = \kappa^* \Phi$ solves an equation of the following type:
\begin{equation*}
\partial_t \tilde{\Phi} + T_{\tilde{\ub}} \partial_\alpha \tilde{\Phi} - i|D|^{\frac{3}{2}} \tilde{\Phi} = \tilde{F}.
\end{equation*}
We remark that this method was previously used in Alazard-Burq-Zuily \cite{MR2931520} and Nguyen \cite{MR3724757} on the Strichartz estimate of two-dimensional gravity-capillary water waves with zero vorticity.
In the following, we recall the main result in Section 3 of \cite{MR3724757} on the global para-composition.

\begin{theorem}[Global para-composition \cite{MR3724757}] \label{t:ParaComposition}
Let $\kappa: \mathbb{R} \rightarrow \mathbb{R}$ be a diffeomorphism that satisfies the following two conditions:
\begin{enumerate}
\item There exist $\rho>0$, $r>-1$, $k\in \mathbb{N}$ such that $\partial_x \kappa \in C_*^\rho(\mathbb{R})$, $\partial_x^k \kappa \in H^{r+1-k}(\mathbb{R})$.
\item There exists a constant $m_0>0$ such that $\forall x\in \mathbb{R}$, $|\kappa^{'}(x)|\geq m_0$.
\end{enumerate}
Then there exists an operator $\kappa^{*}_g$ defined by
\begin{equation*}
 \kappa^{*}_g u = \sum_{k=0}^\infty P_k[ P_k u \circ \kappa], \quad \text{where } (u\circ \kappa) ( \alpha) = u(\kappa( \alpha))
\end{equation*}
with the following properties:
\begin{enumerate}
\item Operation: For every $s\in \mathbb{R}$,
\begin{align*}
    &\|\kappa^{*}_g u \|_{C_{*}^s} \lesssim_{m_0, \|\kappa^{'} \|_{L^\infty}}  \|u \|_{C_{*}^s}, \quad \forall u\in C_{*}^s(\mathbb{R}), \\
    &\|\kappa^{*}_g u \|_{H^s} \lesssim_{m_0, \|\kappa^{'} \|_{L^\infty}}  \|u \|_{H^s}, \quad \forall u\in H^s(\mathbb{R}).
\end{align*}
\item Conjugation: Let $m,s\in \mathbb{R}$ and $\tau>0$. 
Set $\epsilon = \min \{\tau, \rho \}$.
Then for every $h(x, \xi)\in \Gamma^m_\tau$, homogeneous in $\xi$ there exists $h^* \in \Gamma^m_\epsilon$, such that  for all $u\in H^s(\mathbb{R})$, 
\begin{equation*}
    \kappa^{*}_g T_h u = T_{h^*}\kappa^{*}_g u + R_{conj}u,
\end{equation*}
where the remainder term $R_{conj}u$ satisfies
\begin{equation*}
    \|R_{conj}u \|_{H^{s-m+\epsilon}} \lesssim_{m_0, \|\kappa^{'} \|_{C^\rho_{*}}} M^m_\tau(h) (1+\|\partial^{k}_x \kappa \|_{H^{r+1 -k}}) \|u\|_{H^s}.
\end{equation*}
Moreover, $h^{*}(x, \xi)$ is given by the formula
\begin{align*}
&h^{*}(x, \xi) = \sum_{j=0}^{[\rho]}h_j^{*}:=  \sum_{j=0}^{[\rho]}\frac{1}{j!}\partial_\xi^j D_y^j\left(h(\kappa(x), R(x,y)^{-1}\xi)\frac{|\partial_y \kappa(y)|}{|R(x,y)|} \right)\Bigg|_{y= x}, \\
&R(x, y) = \int_0^1 \partial_x \kappa\left(tx +(1-t)y \right) dt.
\end{align*}
\item Linearization: The linearization of $\kappa^{*}_g u$ is given by $u\circ \kappa - \dot{T}_{u^{'}\circ \kappa}\kappa$ and the remainder part
\begin{equation*}
    R_{line} u: = \kappa_g^{*}u -u\circ \kappa + \dot{T}_{u^{'}\circ \kappa}\kappa
\end{equation*}
satisfies the estimates
\begin{enumerate}
\item If $0<\sigma<1$, $\rho+ \sigma >1$, and $r+\sigma >0$, then for $\tilde{s} = \min \{s+\rho, r+\sigma \}$,
\begin{equation*}
\| R_{line} u \|_{H^{\tilde{s}}} \lesssim_{m_0, \| \kappa^{'}\|_{C^\rho_{*}}}(1+ \|\partial^k_x \kappa \|_{H^{r+1-k}})(\|\partial_x u \|_{H^{s-1}}+ \|u\|_{C^\sigma_{*}}).
\end{equation*}
\item If $\sigma>1$, set $\epsilon = \min (\sigma-1, \rho+1)$, then for $\tilde{s} = \min \{s+\rho, r+1+\epsilon \}$,
\begin{equation*}
\| R_{line} u \|_{H^{\tilde{s}}} \lesssim_{m_0, \| \kappa^{'}\|_{C^\rho_{*}}}(1+ \|\partial^k_x \kappa \|_{H^{r+1-k}})(\|\partial_x u \|_{H^{s-1}}+ \|u\|_{C^\sigma_{*}}).
\end{equation*}
\end{enumerate}
\end{enumerate}
\end{theorem}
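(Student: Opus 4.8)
The plan is to establish the three assertions --- the Operation bounds, the Conjugation formula, and the Linearization --- separately, each by a Littlewood--Paley analysis carried out in the spirit of Alinhac's para-composition \cite{MR0814548} but adapted to the global, low-regularity setting. The unifying point is that the outer frequency projection in $\kappa^{*}_g u = \sum_{k\ge 0} P_k[P_k u \circ \kappa]$ exists precisely to restore the exact frequency localization that composing $P_k u$ with $\kappa$ destroys: since $m_0 \le |\kappa'|$ and $\kappa' \in L^\infty$, the function $P_k u \circ \kappa$ stays essentially localized near frequency $2^k$, with tails whose size is governed by $\|\kappa'\|_{C^\rho_*}$. The Operation bounds are then the easy part: on the H\"older side one simply reads off $\|\kappa^{*}_g u\|_{C^s_*} = \sup_k 2^{ks}\|P_k[P_k u\circ\kappa]\|_{L^\infty} \le \sup_k 2^{ks}\|P_k u\|_{L^\infty} = \|u\|_{C^s_*}$, using that $P_k$ is bounded on $L^\infty$ and that composition with a bijection preserves the sup norm, while on the Sobolev side one combines almost-orthogonality of the outer projections with the change-of-variables bound $\|P_k u\circ\kappa\|_{L^2}^2 \le m_0^{-1}\|P_k u\|_{L^2}^2$ (a special case of Lemma \ref{t:Composition}) to get $\|\kappa^{*}_g u\|_{H^s} \lesssim_{m_0,\|\kappa'\|_{L^\infty}} \|u\|_{H^s}$.

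The \textbf{Conjugation} step is the crux, and I expect it to be the main obstacle. I would write $T_h u$ via its Fourier-integral representation, substitute it into $P_k[\,\cdot\circ\kappa\,]$, change variables, and rewrite the resulting phase $\kappa(x)\cdot\xi - y\cdot\xi$ by means of the divided-difference identity $\kappa(x)-\kappa(y) = R(x,y)(x-y)$ with $R(x,y) = \int_0^1 \partial_x\kappa(tx+(1-t)y)\,dt$; the role of $R$ is that it is as regular as $\partial_x\kappa$ yet costs no derivative, which is what makes the low-regularity accounting work. An expansion --- repeated integration by parts in $\xi$ together with Taylor expansion in $y$ about $y=x$ --- then produces the asymptotic symbol $h^{*}(x,\xi) = \sum_{j=0}^{[\rho]} h_j^{*}(x,\xi)$ with $h_j^{*}$ as stated, its leading term being $h(\kappa(x),\kappa'(x)^{-1}\xi)$. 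The remainder after $[\rho]$ terms should gain $\epsilon = \min\{\tau,\rho\}$ derivatives; the H\"older regularity $\partial_x\kappa\in C^\rho_*$ drives the symbolic smoothing, while the Sobolev information $\partial_x^k\kappa\in H^{r+1-k}$ produces the polynomial factor $1+\|\partial_x^k\kappa\|_{H^{r+1-k}}$ in the bound for $R_{conj}u$ (here the homogeneity of $h$ in $\xi$ is used). The delicate part is carrying these two distinct regularity scales of $\kappa$ through the argument simultaneously and verifying that every error term --- in particular those created by the mismatch between the inner and the outer Littlewood--Paley truncations --- actually lands in $H^{s-m+\epsilon}$.

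For the \textbf{Linearization} I would, for each dyadic index $k$, split $\kappa$ into its low-frequency part (frequencies well below $2^k$) and its higher-frequency remainder, and Taylor-expand $P_k u(\kappa(x))$ about that low-frequency part to second order. Summing the zeroth- and first-order contributions over $k$ and recognizing the definition of the truncated paradifferential operator $\dot{T}$ reassembles exactly $u\circ\kappa - \dot{T}_{u'\circ\kappa}\kappa$, while the quadratic remainder carries a factor that is the square of the high-frequency part of $\kappa$, hence gains regularity and is controlled by the H\"older and Sobolev norms of $\kappa$ together with $\|\partial_x u\|_{H^{s-1}}$ and $\|u\|_{C^\sigma_*}$. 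The split into the cases $0<\sigma<1$ and $\sigma>1$ merely records how much smoothness of $u$ can be absorbed before the expansion stops improving, and this is what pins down the final exponent $\tilde s$ in each case. Throughout, I would follow the detailed arguments in Section 3 of Nguyen \cite{MR3724757}.
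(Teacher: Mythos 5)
The paper does not prove this theorem: it is imported verbatim as a citation, and the text immediately before it reads ``we recall the main result in Section 3 of \cite{MR3724757} on the global para-composition.'' There is therefore no in-paper proof against which to compare your argument; what you have written is a reconstruction of the proof in the cited reference (Nguyen \cite{MR3724757}, adapting Alinhac \cite{MR0814548} to a global, low-regularity setting), and you explicitly say at the end that you would follow that source. With that caveat, your outline is consistent with the cited proof: the Operation bounds via $\|P_k u\circ\kappa\|_{L^\infty}=\|P_k u\|_{L^\infty}$ and the change-of-variables $L^2$ bound with constant $m_0^{-1/2}$, the Conjugation via the divided-difference kernel $R(x,y)=\int_0^1\partial_x\kappa(tx+(1-t)y)\,dt$ and an asymptotic expansion of the phase to order $[\rho]$, and the Linearization via Taylor expansion of $P_k u$ about the low-frequency truncation of $\kappa$. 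You correctly identify the Conjugation step --- in particular, tracking the two regularity scales $\partial_x\kappa\in C^\rho_*$ and $\partial_x^k\kappa\in H^{r+1-k}$ simultaneously through the remainder estimates, and controlling the errors created by the mismatch of inner and outer dyadic truncations --- as the genuinely delicate part; your sketch flags rather than resolves it, which is appropriate given the paper itself chooses to cite rather than reproduce the argument. In short, the proposal is a faithful paraphrase of what the paper outsources, and no discrepancy arises because there is no competing in-paper proof.
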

The expression of $\kappa^{*}_g$ is not important for our analysis, we will instead let $\kappa^*$ be the linearization of $\kappa^{*}_g$, and we define
\begin{equation*}
 \tilde{\Phi} :=  \kappa^* \Phi = \Phi\circ \kappa - \dot{T}_{\partial_\alpha \Phi\circ \kappa}\kappa, \quad \text{where } \Phi \circ \kappa (t, \alpha) = \Phi(t, \kappa(t, \alpha)) 
\end{equation*}
For each $t\in [0, T]$, we define a time dependent $\mathbb{R} \mapsto \mathbb{R}$  diffeomorphism $\chi(t, \alpha)$ by
\begin{equation*}
\chi(t, \alpha) = \int_0^\alpha J^{\frac{1}{2}}(t,y) \,dy.
\end{equation*}
This choice of $\chi(t, \alpha)$ gives $\partial_\alpha \chi = c^{-\frac{2}{3}}(t, \alpha)$.
We further set $\kappa$ to be the inverse of $\chi$, then
\begin{equation*}
\partial_\alpha \kappa(t, \alpha) = \frac{1}{(\partial_\alpha \chi)\circ \kappa} = J^{-\frac{1}{2}}\circ \kappa,
\end{equation*}
and 
\begin{equation*}
    1\geq \partial_\alpha \kappa \geq m_0 : = (1+ \|\W \|_{L^\infty_{t,\alpha}})^{-\frac{1}{2}}.
\end{equation*}
Using the Moser type estimate \eqref{MoserOne} and Lemma \ref{t:Composition}, for $s\geq 0$,
\begin{equation}
 \|\partial_\alpha \chi -1 \|_{H^{s}} \lesssim \|\W \|_{H^{s}}, \quad \|\partial_\alpha \kappa -1 \|_{H^{s}} \lesssim \|\W \|_{H^{s}} \label{HsChiAlpha}
\end{equation}
To get an estimate for $\partial_t \chi$, we write
\begin{equation*}
    \partial_t \chi(t, \alpha) = \int_0^\alpha \frac{1}{2}J^{-\frac{1}{2}}(t,y)(1+\bar{\W}(t,y))\W_t(t,y)+ \frac{1}{2}J^{-\frac{1}{2}}(t,y)(1+\W(t,y))\bar{\W}_t(t,y) \, dy. 
\end{equation*}
Using the embedding $\dot{H}^1(\mathbb{R})\hookrightarrow C^{\epsilon}_{*}(\mathbb{R})$, for $s\geq 1$,
\begin{equation}
    \| \partial_t \chi(t, \alpha) \|_{C^{\epsilon}_{*}} +  \| \partial_t \partial_\alpha \chi(t, \alpha) \|_{L^2} \lesssim \| J^{-\frac{1}{2}}(1+\bar{\W})\W_t\|_{L^2} \lesssim_{\mathcal{A}} \| \W_t\|_{L^2} \lesssim \| (\W ,R)(t)\|_{\mathcal{H}^s}.  \label{TimeDerivativeChi}
\end{equation}
In addition, we have
\begin{equation}
 \| \partial_t \partial_\alpha \chi(t, \alpha) \|_{C^{\epsilon}_{*}}\lesssim \|J^{-\frac{1}{2}}(1+\bar{\W})\W_t \|_{C^{\epsilon}_{*}}\lesssim_\CalA \|(\W,R)(t)\|_{\mathcal{W}^r}, \quad r>1.  \label{TimeDerivativeChiTwo}
\end{equation}
Since $\chi$ is the inverse diffeomorphism of $\kappa$, $\chi(t, \kappa(t, \alpha)) = \alpha$, taking the time-derivative gives
\begin{equation*}
    \partial_t \chi \circ \kappa + \partial_\alpha \chi \circ \kappa \cdot \partial_t \kappa =0.
\end{equation*}
Hence, using \eqref{TimeDerivativeChi} and \eqref{TimeDerivativeChiTwo}, for $s\geq 1$, $r>1$,
\begin{align}
&\|\partial_t \kappa(t, \alpha)\|_{L^\infty} = \left\|\frac{\partial_t \chi}{\partial_\alpha \chi}\circ \kappa \right\|_{L^\infty} \leq  m_0\|\partial_t \chi \circ \kappa \|_{L^\infty} \lesssim \| (\W ,R)(t)\|_{\mathcal{H}^s},\label{TimeDerivativeKappa} \\
& \|\partial_t \kappa(t, \alpha) \|_{C^{1+\epsilon}_{*}} \lesssim \|\partial_t \chi\circ \kappa \|_{C^{1+\epsilon}_{*}} \|\partial_\alpha \kappa(t, \alpha) \|_{C^{1+\epsilon}_{*}} \lesssim_\CalA \|(\W,R)(t)\|_{\mathcal{W}^r}.  \label{TimeDerivativeKappaTwo}
\end{align}

Having established all the necessary bounds, we now apply Theorem \ref{t:ParaComposition} to show that $\tilde{\Phi} = \kappa^{*}\Phi$ solves a dispersive equation with its highest order term being constant coefficient.
\begin{proposition} \label{t:ConstantDispersive}
Let $\Phi$ solves  the paradifferential equation \eqref{e:SinglePara}, then for $s>r> 1$,  $\tilde{\Phi} = \kappa^{*}\Phi$ solves the paradifferential equation
\begin{equation}
 \left( \partial_t + T_{\tilde{\ub}}\partial_\alpha - i|D|^{\frac{3}{2}}\right)\tilde{\Phi} = \tilde{F}, \label{ConstantDispersive}
\end{equation}
where the new para-coefficient for the transport term is
\begin{equation*}
    \tilde{\ub} = (\ub \circ \kappa)(\partial_\alpha \chi \circ \kappa) + \partial_t \chi \circ \kappa,
\end{equation*}
and the new source term $\tilde{F}$ satisfies, for $t\in [0,T]$,
\begin{equation*}
\|\tilde{F}(t)\|_{H^{s-\frac{1}{2}}} \lesssim_{\|(\W, R) \|_{L^\infty_t \mathcal{H}^s_\alpha}} 1+\|(\W,R)(t)\|_{\mathcal{W}^{r}}.
\end{equation*}
\end{proposition}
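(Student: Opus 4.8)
The plan is to conjugate the paradifferential equation \eqref{e:SinglePara} by the linearized global para-composition $\kappa^{*}$, treating the transport part $T_{D_t}=\partial_t+T_{\ub}\partial_\alpha$ and the dispersive part $-i\big(L^{\frac12}T_cL^{\frac12}+\tfrac\gamma2\text{sgn}(D)\big)$ separately. Two algebraic facts drive the computation. First, the diffeomorphism has been chosen so that $c\circ\kappa\cdot(\partial_\alpha\kappa)^{-3/2}\equiv1$, because $c=J^{-3/4}$ and $\partial_\alpha\kappa=J^{-1/2}\circ\kappa$; this is exactly what flattens the principal symbol of the dispersive operator. Second, differentiating $\chi(t,\kappa(t,\alpha))=\alpha$ in $t$ gives $\partial_t\kappa=-(\partial_t\chi\circ\kappa)\,\partial_\alpha\kappa$, so the chain rule produces precisely the advection coefficient $\tilde\ub=(\ub\circ\kappa)(\partial_\alpha\chi\circ\kappa)+\partial_t\chi\circ\kappa$. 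It is convenient first to work with $\kappa^{*}_g$, for which the operation, conjugation and linearization calculus of Theorem \ref{t:ParaComposition} is available, and to pass to $\kappa^{*}=\kappa^{*}_g-R_{line}$ at the end. One first checks that $\kappa$ is admissible for Theorem \ref{t:ParaComposition}: by \eqref{HsChiAlpha}, $\partial_\alpha\kappa-1\in H^s$; since $\W\in\mathcal{W}^{r}$ with $r>1$, one has $\partial_\alpha\kappa\in C^{\rho}_{*}$ and $c\in C^{\tau}_{*}$ with $\rho,\tau\ge r+\tfrac12>\tfrac32$; and $\partial_\alpha\kappa\ge m_0>0$.

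For the dispersive part, I would first use symbolic calculus to write $L^{\frac12}T_cL^{\frac12}=T_{c\ell}+R_0$, where $R_0$ has order $\le\tfrac12$ with operator bound $\lesssim_{\CalA}\|\W\|_{\mathcal{W}^r}$ (its symbol involves $\partial_\alpha c$). Applying the conjugation formula of Theorem \ref{t:ParaComposition} to $T_{c\ell}$ gives $\kappa^{*}_gT_{c\ell}\Phi=T_{(c\ell)^{*}}\kappa^{*}_g\Phi+R_{conj}\Phi$, with $R_{conj}$ of order $\tfrac32-\min\{\tau,\rho\}<0$, and the principal part of $(c\ell)^{*}$ equal to $c(\kappa(x))\,\ell\big((\partial_\alpha\kappa(x))^{-1}\xi\big)$; since $\ell(\eta)=\sqrt\sigma|\eta|^{3/2}+O(|\eta|^{-1/2})$, its leading behavior in $\xi$ is $\sqrt\sigma\,c(\kappa(x))(\partial_\alpha\kappa(x))^{-3/2}|\xi|^{3/2}=\sqrt\sigma|\xi|^{3/2}$ by the first algebraic fact. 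All the leftover pieces — the $O(|\xi|^{-1/2})$ part of $\ell$, the $j\ge1$ terms in the expansion of $(c\ell)^{*}$ (order $\le\tfrac12$), $R_0$, $R_{conj}$, and the conjugate of the order-zero operator $\tfrac\gamma2\text{sgn}(D)$ — have order $\le\tfrac12$, so applied to $\Phi\in H^s$ they land in $H^{s-\frac12}$ with a bound of the required form, and are absorbed into $\tilde F$. (The harmless constant $\sqrt\sigma$ is removed by a linear rescaling of $\alpha$, equivalently by inserting a factor $\sigma^{-1/3}$ in the definition of $\chi$; I keep writing $|D|^{3/2}$.)

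For the transport part, I would use the linearization expansion $\kappa^{*}_g u=u\circ\kappa-\dot{T}_{\partial_\alpha u\circ\kappa}\kappa+R_{line}u$. Computing $\partial_t(\kappa^{*}_g\Phi)-\kappa^{*}_g(\partial_t\Phi)$, one finds that after a paraproduct decomposition the low--high piece $\dot{T}_{\partial_\alpha\Phi\circ\kappa}\partial_t\kappa$ coming from the chain-rule term $(\partial_\alpha\Phi\circ\kappa)\partial_t\kappa$ cancels against the corresponding piece of $\partial_t(\dot{T}_{\partial_\alpha\Phi\circ\kappa}\kappa)$, leaving $\dot{T}_{\partial_t\kappa}(\partial_\alpha\Phi\circ\kappa)$ together with a balanced paraproduct and a term $\dot{T}_{(\partial_\alpha^2\Phi\circ\kappa)\partial_t\kappa}\kappa$. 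Writing $\partial_\alpha\Phi\circ\kappa=(\partial_\alpha\kappa)^{-1}\partial_\alpha(\Phi\circ\kappa)$ and using $\partial_t\kappa(\partial_\alpha\kappa)^{-1}=-\partial_t\chi\circ\kappa$, the main leftover becomes $-\dot{T}_{\partial_t\chi\circ\kappa}\partial_\alpha(\Phi\circ\kappa)$; meanwhile conjugating $T_\ub\partial_\alpha$ (symbol $i\ub\xi$, order $1$) by $\kappa^{*}_g$ produces, to leading order, $T_{(\ub\circ\kappa)(\partial_\alpha\chi\circ\kappa)}\partial_\alpha$ since $(\partial_\alpha\kappa)^{-1}=\partial_\alpha\chi\circ\kappa$. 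Adding the contributions, the transport operator becomes $\partial_t+T_{\tilde\ub}\partial_\alpha$ with $\tilde\ub$ as stated, up to errors that are balanced paraproducts, order-$\le0$ sub-principal terms, and the various $R_{line}$ and $R_{conj}$ remainders; all of these I would estimate in $H^{s-\frac12}$ using \eqref{HsChiAlpha}, \eqref{TimeDerivativeChi}, \eqref{TimeDerivativeChiTwo}, \eqref{TimeDerivativeKappa}, \eqref{TimeDerivativeKappaTwo} together with the operation and linearization bounds of Theorem \ref{t:ParaComposition}, always putting the top Hölder norm ($\mathcal{W}^r$) on a single factor and controlling the rest by $\|(\W,R)\|_{\mathcal{H}^s}$.

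Finally, to pass from $\kappa^{*}_g$ to $\kappa^{*}=\kappa^{*}_g-R_{line}$, the terms $R_{line}\Phi$, $\partial_t(R_{line}\Phi)$ and $R_{line}(\partial_t\Phi)$, once moved to the right-hand side, are controlled by the linearization estimates of Theorem \ref{t:ParaComposition} (in the regime of Hölder exponent $>1$, supplied by $\mathcal{W}^r$ with $r>1$). The transported source obeys $\|\kappa^{*}_gF\|_{H^s}\lesssim\|F\|_{H^s}\lesssim_{\CalA}\CalB\,\|(\W,R)\|_{\mathcal{H}^s}$ by \eqref{FHsBound} and the operation bound, and since $\CalB\lesssim1+\|(\W,R)\|_{\mathcal{W}^r}$ for $r>1$ and $H^s\hookrightarrow H^{s-\frac12}$, it is absorbed into $\tilde F$ with room to spare. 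Summing all contributions gives \eqref{ConstantDispersive} with the stated bound on $\tilde F$. I expect the main obstacle to be the regularity bookkeeping: every error term has to be shown to lie in $H^{s-\frac12}$ — only a half-derivative loss below $H^s$ — and to depend affinely on $\|(\W,R)\|_{\mathcal{W}^r}$, which forces one to systematically exploit the smoothing gained in the sub-principal terms of the conjugation formula and to be scrupulous about which factor in each product or paraproduct carries the top Hölder norm.
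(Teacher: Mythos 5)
Your proposal follows essentially the same route as the paper: work term by term through $\partial_t$, $T_{\ub}\partial_\alpha$, and the dispersive operator, flatten the principal dispersive symbol via the algebraic identity $c\circ\kappa\cdot(\partial_\alpha\kappa)^{-3/2}\equiv1$, produce the transport coefficient $\tilde{\ub}$ via $\partial_t\kappa=-(\partial_t\chi\circ\kappa)\partial_\alpha\kappa$, and dump everything else into $\tilde F$ using the operation, conjugation, and linearization estimates of Theorem \ref{t:ParaComposition}. One small slip to fix when writing this out: after the paraproduct cancellation the residual piece is $-\dot{T}_{\partial_t\kappa}(\partial_\alpha\Phi\circ\kappa)$, so after substituting $\partial_t\kappa=-(\partial_t\chi\circ\kappa)\partial_\alpha\kappa$ the signs cancel and you should land on $+\dot{T}_{(\partial_t\chi)\circ\kappa}\partial_\alpha\tilde\Phi$ (consistent with the stated $\tilde\ub$), not the minus sign you wrote.
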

\begin{proof}
We apply the operator $\kappa^{*}$ to equation \eqref{e:SinglePara}.
First, for the term of time derivative, we use the definition of $\kappa^{*}$,
\begin{align*}
&\kappa^{*}(\partial_t \Phi) = \partial_t \Phi \circ \kappa -\dot{T}_{(\partial_t \partial_\alpha \Phi)\circ \kappa}\kappa = \partial_t(\Phi \circ \kappa ) - (\partial_\alpha \Phi \circ \kappa)\partial_t \kappa - \dot{T}_{(\partial_t \partial_\alpha \Phi)\circ \kappa}\kappa\\
=& \partial_t(\kappa^{*}\Phi) +\dot{T}_{(\partial_\alpha^2 \Phi \circ \kappa)\partial_t \kappa }\kappa+ \left(\dot{T}_{(\partial_\alpha \Phi)\circ \kappa}\partial_t \kappa - (\partial_\alpha \Phi \circ \kappa)\partial_t \kappa \right).
\end{align*}
Note that $\dot{T}_a u$ does not involve the low frequency part of $u$.
We estimate using \eqref{HsChiAlpha}, \eqref{TimeDerivativeKappaTwo}, \eqref{HsCmStar}, and \eqref{CNegativeAlpha},
\begin{align*}
 &\|\dot{T}_{(\partial_\alpha^2 \Phi \circ \kappa)\partial_t \kappa }\kappa \|_{H^{s+\frac{1}{2}}} \lesssim \|(\partial_\alpha^2 \Phi \circ \kappa)\partial_t \kappa\|_{C^{-1}_{*}}\|\partial_\alpha^2 \kappa \|_{H^{s-\frac{1}{2}}} \lesssim \|\partial_\alpha \Phi \circ \kappa\|_{C^0_{*}} \|\partial_t \kappa \|_{C^{1+\epsilon}_{*}}  \|\partial_\alpha^2 \kappa \|_{H^{s-\frac{1}{2}}} \\
 \lesssim& \|(\W, R) \|_{\mathcal{H}^s}\|(\W,R)\|_{\mathcal{W}^r}^2.
\end{align*}
To estimate the last term of $\kappa^{*}(\partial_t \Phi)$, we use the fact that
\begin{equation*}
    \partial_\alpha (\kappa^{*}\Phi) = (\partial_\alpha \Phi)\circ \kappa\cdot \partial_\alpha \kappa -\dot{T}_{ (\partial_\alpha \Phi)\circ \kappa}\partial_\alpha \kappa - \dot{T}_{\partial_\alpha((\partial_\alpha \Phi)\circ \kappa)} \kappa,  
\end{equation*}
so that
\begin{align*}
&\dot{T}_{(\partial_\alpha \Phi)\circ \kappa}\partial_t \kappa - (\partial_\alpha \Phi \circ \kappa)\partial_t \kappa = - \dot{T}_{\partial_t\kappa} (\partial_\alpha \Phi)\circ \kappa - \dot{\Pi}((\partial_\alpha \Phi)\circ \kappa, \partial_t\kappa)\\
 =& \dot{T}_{(\partial_t \chi)\circ \kappa \cdot \partial_\alpha \kappa} (\partial_\alpha \Phi)\circ \kappa + \dot{\Pi}((\partial_\alpha \Phi)\circ \kappa, (\partial_t \chi)\circ \kappa \cdot \partial_\alpha \kappa)\\
 =& \dot{T}_{(\partial_t \chi)\circ \kappa}\dot{T}_{\partial_\alpha \kappa} (\partial_\alpha \Phi)\circ \kappa + \dot{\Pi}((\partial_\alpha \Phi)\circ \kappa, (\partial_t \chi)\circ \kappa \cdot \partial_\alpha \kappa) +  F_{para}\\
 =& \dot{T}_{(\partial_t \chi)\circ \kappa}\left((\partial_\alpha \Phi)\circ \kappa\cdot \partial_\alpha \kappa -\dot{T}_{ (\partial_\alpha \Phi)\circ \kappa}\partial_\alpha \kappa \right) - \dot{T}_{(\partial_t \chi)\circ \kappa} \dot{\Pi}((\partial_\alpha \Phi)\circ \kappa, \partial_\alpha \kappa)\\
 &+ \dot{\Pi}((\partial_\alpha \Phi)\circ \kappa, (\partial_t \chi)\circ \kappa \cdot \partial_\alpha \kappa) +  F_{para}\\
 =& \dot{T}_{(\partial_t \chi)\circ \kappa} \partial_\alpha (\kappa^{*}\Phi)+  \dot{T}_{(\partial_t \chi)\circ \kappa}\dot{T}_{\partial_\alpha((\partial_\alpha \Phi)\circ \kappa)} \kappa - \dot{T}_{(\partial_t \chi)\circ \kappa} \dot{\Pi}((\partial_\alpha \Phi)\circ \kappa, \partial_\alpha \kappa)\\
 &+ \dot{\Pi}((\partial_\alpha \Phi)\circ \kappa, (\partial_t \chi)\circ \kappa \cdot \partial_\alpha \kappa) +  F_{para},
\end{align*}
where remainder term $F_{para}$ is given by
\begin{equation*}
 F_{para} =    \dot{T}_{(\partial_t \chi)\circ \kappa \cdot \partial_\alpha \kappa} (\partial_\alpha \Phi)\circ \kappa- \dot{T}_{(\partial_t \chi)\circ \kappa}\dot{T}_{\partial_\alpha \kappa} (\partial_\alpha \Phi)\circ \kappa.
\end{equation*} 
Using \eqref{TimeDerivativeChi} and \eqref{TimeDerivativeChiTwo},
\begin{equation*}
\|\partial_t \chi \circ \kappa \|_{L^\infty}\lesssim \| (\W, R)(t)\|_{\mathcal{H}^s}, \quad \|\partial_t\chi \circ \kappa \|_{W^{1,\infty}} \lesssim  \| (\W, R)\|_{\mathcal{H}^s}( 1+\|(\W,R)(t)\|_{\mathcal{W}^r}).
\end{equation*}
We now estimate the second to the last terms of $\dot{T}_{(\partial_\alpha \Phi)\circ \kappa}\partial_t \kappa - (\partial_\alpha \Phi \circ \kappa)\partial_t \kappa$, and show that they can be put into the source term $\tilde{F}$.
\begin{align*}
&\| \dot{T}_{(\partial_t \chi)\circ \kappa}\dot{T}_{\partial_\alpha((\partial_\alpha \Phi)\circ \kappa)} \kappa \|_{H^s}\lesssim \|(\partial_t \chi)\circ \kappa \|_{L^\infty} \|\partial_\alpha((\partial_\alpha \Phi)\circ \kappa) \|_{L^\infty} \|\partial_\alpha^2 \kappa\|_{H^{s-2}} \\
\lesssim & \|(\W, R) \|_{\mathcal{H}^s}^2 \|(\W,R)(t)\|_{\mathcal{W}^r}, \\
& \| \dot{T}_{(\partial_t \chi)\circ \kappa} \dot{\Pi}((\partial_\alpha \Phi)\circ \kappa, \partial_\alpha \kappa) \|_{H^s} \lesssim \|(\partial_t \chi)\circ \kappa \|_{L^\infty}\|(\partial_\alpha \Phi)\circ \kappa\|_{H^{s-1}}\|\partial_\alpha \kappa\|_{C^1_{*}}\\
\lesssim & \|(\W, R) \|_{\mathcal{H}^s}^2 \left(1+\|(\W,R)(t)\|_{\mathcal{W}^r} \right),\\
& \|\dot{\Pi}((\partial_\alpha \Phi)\circ \kappa, (\partial_t \chi)\circ \kappa \cdot \partial_\alpha \kappa)\|_{H^s} \lesssim \|(\partial_\alpha \Phi)\circ \kappa \|_{H^{s-1}} \|(\partial_t \chi)\circ \kappa \|_{C^1_{*}}\| \partial_\alpha \kappa \|_{C^1_{*}}\\
\lesssim &\|(\W, R) \|_{\mathcal{H}^s}^2 \left(1+\|(\W,R)(t)\|_{\mathcal{W}^r} \right),\\
&\|F_{para}\|_{H^s}\lesssim \left(\|(\partial_t \chi)\circ \kappa  \|_{L^\infty} \|\partial_\alpha \kappa \|_{W^{1,\infty}} + \|(\partial_t \chi)\circ \kappa  \|_{W^{1,\infty}} \|\partial_\alpha \kappa \|_{L^\infty}\right)\| (\partial_\alpha \Phi)\circ \kappa\|_{H^{s-1}}\\
\lesssim &\|(\W, R) \|_{\mathcal{H}^s}^2 \left(1+\|(\W,R)(t)\|_{\mathcal{W}^r} \right).
\end{align*}
Therefore, we have shown that
\begin{equation}
\kappa^{*}(\partial_t \Phi) = \left(\partial_t + T_{(\partial_t\chi)\circ \kappa}\partial_\alpha \right)\tilde{\Phi} + \tilde{F}. \label{PartialtTildePhi}
\end{equation}

Next, we consider the action of $\kappa^{*}$ on  the transport term.
According to the global para-composition Theorem \ref{t:ParaComposition}, for any $h\in \Gamma^m_\tau$, 
\begin{equation*}
\kappa^{*} T_h \Phi = T_{h^{*}}\kappa^{*}\Phi  +R_{conj}\Phi  + T_{h^{*}}R_{line}\Phi -R_{line}T_h \Phi.  
\end{equation*}
Choosing $\rho =1$, $k=2$, $r = s-\frac{1}{2}$, then
\begin{equation*}
\|\partial_\alpha \kappa -1\|_{C^1_{*}} \lesssim \|\partial_\alpha \kappa -1\|_{H^{s-\frac{1}{2}}} \lesssim \|(\W, R) \|_{\mathcal{H}^s}.
\end{equation*}
Together with the fact that $|\partial_\alpha \kappa|\geq m_0$, the two conditions in Theorem \ref{t:ParaComposition} are satisfied.

The symbol of the transport term $h = i\xi \ub(t,\alpha)$.
Choosing $\rho = \tau = \epsilon =1$, 
\begin{equation*}
h^{*}(x, \xi) = i\frac{\xi}{\partial_\alpha \kappa(t, \alpha)}\ub\circ \kappa = i(\ub\circ \kappa)(\partial_\alpha \chi \circ \kappa)\xi.
\end{equation*}
The remainder terms satisfy the estimates
\begin{align*}
&\|R_{conj}\Phi \|_{H^s} \lesssim_{m_0, \|\kappa^{'} \|_{C^\rho_{*}}} M^1_1 (h) (1+\|\partial^{2}_\alpha \kappa \|_{H^{s-\frac{3}{2}}}) \|\Phi\|_{H^s}\\
\lesssim&_{\|(\W, R) \|_{L^\infty_t \mathcal{H}^s_\alpha}} 1+\|(\W,R)\|_{\mathcal{W}^r},\\
& \|T_{h^{*}}R_{line}\Phi \|_{H^s} \lesssim M^1_0(h^{*})\|R_{line}\Phi  \|_{H^{s+1}}\\
\lesssim&_{m_0, \| \kappa^{'}\|_{C^\rho_{*}}} M^1_0(h^{*}) (1+ \|\partial^2_\alpha \kappa \|_{H^{s-\frac{3}{2}}})(\|\partial_\alpha \Phi \|_{H^{s-1}}+ \|\Phi\|_{C^r_{*}})\\
\lesssim&_{\|(\W, R) \|_{L^\infty_t \mathcal{H}^s_\alpha}} 1+\|(\W,R)\|_{\mathcal{W}^r},\\
& \|R_{line}T_h \Phi \|_{H^{s+\frac{1}{2}}}\lesssim_{m_0, \| \kappa^{'}\|_{C^\rho_{*}}} (1+ \|\partial^2_\alpha \kappa \|_{H^{s-\frac{3}{2}}})\left(\|\partial_\alpha T_h\Phi \|_{H^{s-1}}+ \|T_h\Phi \|_{C^{r-1}_{*}}\right)\\
\lesssim&_{\|(\W, R) \|_{L^\infty_t \mathcal{H}^s_\alpha}} 1+\|(\W,R)\|_{\mathcal{W}^r}.
\end{align*}
Hence, we have shown that
\begin{equation}
\kappa^{*}T_{\ub}\partial_\alpha \Phi = T_{(\ub\circ \kappa)(\partial_\alpha \chi \circ \kappa)}\partial_\alpha \tilde{\Phi}+ \tilde{F}. \label{TransportTildePhi}
\end{equation}

In the following, we estimate the conjugation of dispersive terms with $\kappa^{*}$.
One can write
\begin{equation*}
 L^{\frac{1}{2}}T_c L^{\frac{1}{2}} +\frac{\gamma}{2}\text{sgn}(D) = T_{m^{(\frac{3}{2})}} + T_{m^{(\frac{1}{2})}}, \quad m^{(\frac{3}{2})}: = c(t,\alpha)|\xi|^{\frac{3}{2}},   
\end{equation*}
and $m^{(\frac{1}{2})}$ is a symbol of order $\frac{1}{2}$.
Applying the global para-composition Theorem \ref{t:ParaComposition} to $T_{m^{(\frac{3}{2})}}$, 
\begin{equation*}
\kappa^{*} T_{m^{(\frac{3}{2})}} \Phi = T_{m^{*}}\kappa^{*}\Phi  +R_{conj}\Phi  + T_{m^{*}}R_{line}\Phi -R_{line}T_{m^{(\frac{3}{2})}} \Phi,  
\end{equation*}
where by the choice of diffeomorphism $\kappa$,
\begin{equation*}
    m^{*}(t,\alpha, \xi) = c\circ \kappa(t,\alpha) \frac{|\xi|^{\frac{3}{2}}}{\partial_\alpha \kappa} = |\xi|^{\frac{3}{2}}.
\end{equation*}
For the rest of terms in $\kappa^{*} T_{m^{(\frac{3}{2})}} \Phi$,
\begin{align*}
&\|R_{conj}\Phi \|_{H^{s-\frac{1}{2}}}  \lesssim_{m_0, \|\kappa^{'} \|_{C^\rho_{*}}} M^{\frac{3}{2}}_1 (m^{(\frac{3}{2})}) \left(1+\|\partial^{2}_\alpha \kappa \|_{H^{s-\frac{3}{2}}}\right) \|\Phi\|_{H^s} \lesssim \|(\W, R) \|_{\mathcal{H}^s},\\
&\|T_{m^{*}}R_{line}\Phi \|_{H^{s-\frac{1}{2}}} \lesssim_{m_0, \| \kappa^{'}\|_{C^\rho_{*}}} \left(1+ \|\partial^2_\alpha \kappa \|_{H^{s-\frac{3}{2}}}\right)(\|\partial_\alpha \Phi \|_{H^{s-1}}+ \|\Phi\|_{C^r_{*}})\\
\lesssim&_{\|(\W, R) \|_{L^\infty_t \mathcal{H}^s_\alpha}} 1+\|(\W,R)\|_{\mathcal{W}^r},\\
& \|R_{line}T_{m^{(\frac{3}{2})}} \Phi \|_{H^{s-\frac{1}{2}}} \lesssim_{m_0, \| \kappa^{'}\|_{C^\rho_{*}}} \left(1+ \|\partial^2_\alpha \kappa \|_{H^{s-\frac{3}{2}}}\right)\left(\|T_{m^{(\frac{3}{2})}}\Phi \|_{H^{s-\frac{3}{2}}}+ \|T_{m^{(\frac{3}{2})}}\Phi\|_{C^{\frac{1}{2}}_{*}}\right)\\
\lesssim&_{\|(\W, R) \|_{L^\infty_t \mathcal{H}^s_\alpha}} 1+ \|(\W,R)\|_{\mathcal{W}^r},
\end{align*}
so that they can be put into $\tilde{F}$.
Since $m^{(\frac{1}{2})}\in \Gamma^{\frac{1}{2}}_0$, by the operation and the linearization properties in Theorem \ref{t:ParaComposition}, 
\begin{equation*}
\|i\kappa^{*}T_{m^{(\frac{1}{2})}}\Phi \|_{H^{s-\frac{1}{2}}} \lesssim \|(\W, R) \|_{H^s}.
\end{equation*}
Therefore, we obtain
\begin{equation}
i\kappa^{*}\left(L^{\frac{1}{2}}T_c L^{\frac{1}{2}} +\frac{\gamma}{2}\text{sgn}(D) \right)\Phi = i|D|^{\frac{3}{2}}\tilde{\Phi} + \tilde{F}. \label{GammaTildePhi}
\end{equation}

Finally, for the source term on the right-hand side of \eqref{e:SinglePara}, using \eqref{FHsBound},
\begin{equation*}
 \|\kappa^{*}F (t)\|_{H^{s}} \lesssim_{\|(\W, R) \|_{L^\infty_t \mathcal{H}^s_\alpha}} 1+\|(\W,R)(t)\|_{\mathcal{W}^r},
\end{equation*}
and $\kappa^{*}F$ can be put into $\tilde{F}$.

Gathering the equations \eqref{PartialtTildePhi}, \eqref{TransportTildePhi} and \eqref{GammaTildePhi}, we finish the proof of this proposition.
\end{proof}

In order to obtain the Strichartz estimate for the nonlinear paradifferential equation of \eqref{ConstantDispersive} type, we need Theorem 4.14 from \cite{MR3724757}, which is recalled below. 
\begin{proposition}[The Strichartz estimate \cite{MR3724757}] \label{t:StrichartzGeneral}
 Let $I=[0,T]$ and $s_0\in \mathbb{R}$, $V\in L^\infty([0,T];L^\infty(\mathbb{R}))\cap L^4([0,T]; W^{1,\infty}(\mathbb{R}))$ and $f\in L^4(I; H^{s_0 -\frac{1}{2}}(\mathbb{R}))$.
 If $u\in L^\infty(I;H^{s_0}(\mathbb{R}))$  is a solution to the problem
 \begin{equation*}
 \left(\partial_t +T_V \partial_x -i|D_x|^{\frac{3}{2}} \right)u = f,
 \end{equation*}
 then for every $\epsilon>0$, there exists a constant $C$ that depends on $\| V\|_{L^\infty([0,T];L^\infty)} + \| V\|_{ L^4([0,T]; W^{1,\infty})}$, such that
 \begin{equation*}
 \| u\|_{L^4(I; C_{*}^{s_0 -\frac{1}{4}-\epsilon})} \leq C \left( \|f \|_{L^4(I; H^{s_0 -\frac{1}{2}-\epsilon})} + \| u\|_{L^\infty(I; H^{s_0})} \right).
 \end{equation*}
\end{proposition}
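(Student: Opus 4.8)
The estimate is a variable-coefficient Strichartz estimate for a dispersive equation whose principal part $-i|D|^{3/2}$ has non-degenerate dispersion, and I would prove it by the standard four-step scheme for such equations: Littlewood--Paley reduction, a short-time (semiclassical) analysis of each dyadic block via a parametrix/stationary phase, summation over time windows, and a Christ--Kiselev argument for the forcing term. This is the line of Burq--G\'erard--Tzvetkov and, at low regularity, Staffilani--Tataru, and in the water-wave context Alazard--Burq--Zuily \cite{MR2931520} and Nguyen \cite{MR3724757}, whose Section~4 carries out precisely this analysis. First I would apply the frequency projection $P_k$ to the equation. Since $[\partial_x,P_k]=0$ and $[T_V,P_k]$ is of order $-1$ with norm $\lesssim\|V(t)\|_{W^{1,\infty}}$, the commutator $[T_V\partial_x,P_k]$ is bounded on $L^2$ by $\|V(t)\|_{W^{1,\infty}}$, so $u_k:=P_ku$ solves
\begin{equation*}
\bigl(\partial_t+T_V\partial_x-i|D|^{3/2}\bigr)u_k=f_k+g_k,\qquad \|g_k\|_{L^4(I;H^{s_0})}\lesssim\|V\|_{L^4(I;W^{1,\infty})}\|u\|_{L^\infty(I;H^{s_0})}.
\end{equation*}
It then suffices to prove, for each $k$, a frequency-localized estimate of the schematic form $\|u_k\|_{L^4(I;L^\infty_x)}\lesssim 2^{(1/4+\epsilon)k}\bigl(\|u_k\|_{L^\infty(I;L^2)}+2^{-(s_0-1/2)k}\|f_k+g_k\|_{L^4(I;L^2)}\bigr)$; multiplying by $2^{(s_0-1/4-2\epsilon)k}$ and square-summing in $k$, using $B^{\sigma}_{\infty,2}\hookrightarrow C^{\sigma}_{*}$, returns the asserted bound after relabeling $\epsilon$. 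The $\tfrac12$-derivative slack permitted on the forcing, hence the hypothesis $f\in L^4H^{s_0-1/2-\epsilon}$, is the standard gain of the dispersive Duhamel operator at order $\tfrac32$ (a non-diagonal inhomogeneous Strichartz estimate, combining the $(4,\infty)$ pair on one side with the energy $(\infty,2)$ estimate on the other).

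\textbf{Short-time analysis of a dyadic block.} Fix $k$, set $h=2^{-k}$, and split $I$ into $N\simeq|I|\,h^{-1}$ consecutive intervals $I_j$ of length $\simeq h$. This is the time scale on which the order-one term $T_V\partial_x$ — which carries a factor $2^k$ on the shell $|\xi|\simeq 2^k$ — becomes, after integration over $I_j$, a bounded ($O(\|V\|_{L^\infty_{t,x}})$) perturbation, and also the scale on which the coefficient $V$, only Lipschitz in $x$ and $L^4$ in $t$, can be frozen or mollified. On each $I_j$ one analyses the propagator $U(t,s)$ of $\partial_t+T_V\partial_x-i|D|^{3/2}$ restricted to frequency $2^k$. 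The two ingredients are the uniform $L^2_x$-boundedness of $U(t,s)$, from the standard energy estimate (here $T_V\partial_x$ is skew-adjoint modulo order zero, as when $V=\tilde{\ub}$ is real in \eqref{ConstantDispersive}), and the non-degeneracy $\partial_\xi^2|\xi|^{3/2}\simeq|\xi|^{-1/2}\neq0$ on $|\xi|\simeq 2^k$, which by a stationary-phase/parametrix argument gives the dispersive bound
\begin{equation*}
\|U(t,s)P_k\|_{L^1_x\to L^\infty_x}\lesssim \min\!\bigl(2^k,\,2^{k/4}|t-s|^{-1/2}\bigr),\qquad |t-s|\lesssim h.
\end{equation*}
Combined with a $TT^*$ argument and the Hardy--Littlewood--Sobolev inequality in $t$ against the one-dimensional Strichartz pair $(q,r)=(4,\infty)$, this yields a per-window estimate for $U(\cdot,t_j)$, and by Christ--Kiselev (admissible since $4>\tfrac43$) its retarded variant handling the forcing on $I_j$.

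\textbf{Summation.} Writing $u_k(t)=U(t,t_j)u_k(t_j)+\int_{t_j}^t U(t,s)(f_k+g_k)(s)\,ds$ on each $I_j$ and summing the per-window bounds over $j=1,\dots,N$ — with the $L^2_x$ orthogonality of the window contributions needed to control the $\ell^4$-in-$j$ summation, so that $\|V\|_{L^4(I;W^{1,\infty})}^4=\sum_j\|V\|_{L^4(I_j;W^{1,\infty})}^4$ matches the $\ell^4$ bookkeeping dictated by $q=4$ — and then undoing the rescaling and reassembling the Littlewood--Paley pieces produces the frequency-localized estimate above, hence the theorem. The value of the exponent $\tfrac14$ and the presence of $\epsilon$ are the outcome of this accounting: with a smooth coefficient the parametrix would run over the whole interval and give the sharper gain $\tfrac18$, whereas the Lipschitz-only coefficient forces the short windows and the $\epsilon$-loss in the parametrix.

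\textbf{Main obstacle.} The technical heart is the dyadic-block analysis: constructing a parametrix for $U(t,s)$ on the windows $I_j$ with only $V\in W^{1,\infty}_x$ (and $L^4_t$). One mollifies $V$ at spatial scale $h=2^{-k}$, must show the regularized bicharacteristic flow stays non-degenerate and the phase function is well defined over $[0,h]$, and must bound the error between $U$ and the approximate propagator in a norm strong enough to survive the $\ell^4$ summation; the tension between needing $V$ smooth for the WKB construction and having only $W^{1,\infty}$ is exactly what pins down the admissible window length and forces the $\epsilon$-loss. The secondary difficulty is the exponent bookkeeping itself — making the orthogonal summation in $j$ close at the $\tfrac14+\epsilon$ threshold (a naive triangle-inequality summation is lossier), and extracting the $\tfrac12$-derivative gain in the inhomogeneous Strichartz estimate so that a forcing in $L^4H^{s_0-1/2-\epsilon}$ indeed suffices.
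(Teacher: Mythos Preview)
The paper does not prove this proposition at all: it is quoted verbatim as ``Theorem 4.14 from \cite{MR3724757}'' and used as a black box in the proof of Theorem~\ref{t:MainTwo}. There is therefore no proof in the paper to compare your attempt against.

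That said, your sketch is a faithful outline of the argument that Nguyen (and before him Alazard--Burq--Zuily \cite{MR2931520}) actually carries out: Littlewood--Paley localization, semiclassical time windows of length $h=2^{-k}$ determined by the Lipschitz regularity of $V$, a parametrix/dispersive estimate on each window exploiting the nondegeneracy of $\partial_\xi^2|\xi|^{3/2}$, $TT^*$ plus Keel--Tao/Christ--Kiselev for the inhomogeneous term, and $\ell^4$-summation over the windows. Your identification of the main technical difficulty --- constructing the parametrix with only $V\in L^4_t W^{1,\infty}_x$ and showing the error survives summation --- is exactly right, and your remark that smooth coefficients would yield the sharper exponent $2^{k/8}$ (hence a $3/8$ gain in the final estimate) is correct. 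One small imprecision: the commutator $[T_V\partial_x,P_k]$ is not quite order $-1$ as stated, but the point is that $[T_V,P_k]\partial_x$ is bounded on $L^2$ with norm $\lesssim\|V\|_{W^{1,\infty}}$, which is what you use. If you were writing this up in full you would also need to be slightly careful that the paraproduct $T_V$ (rather than multiplication by $V$) is what appears, since this is what makes the commutator estimate clean.
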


Now we are in the position to finish the proof of Theorem \ref{t:MainTwo}.
\begin{proof}[Proof of Therorem \ref{t:MainTwo}]
Applying Proposition \ref{t:StrichartzGeneral} for \eqref{ConstantDispersive}, we obtain the estimate
 \begin{equation}
 \| \tilde{\Phi}\|_{L^4(I; C_{*}^{s -\frac{1}{4}-\epsilon})} \leq C \left( \|\tilde{F} \|_{L^4(I; H^{s -\frac{1}{2}-\epsilon})} + \| \tilde{\Phi}\|_{L^\infty(I; H^{s})} \right), \quad s>2, \label{TildePhiStrichartz}
 \end{equation}
 where the constant $C$ depends on
\begin{equation*}
\|(\ub \circ \kappa)(\partial_\alpha \chi \circ \kappa) + \partial_t \chi \circ \kappa \|_{L^\infty([0,T];L^\infty)} + \| (\ub \circ \kappa)(\partial_\alpha \chi \circ \kappa) + \partial_t \chi \circ \kappa\|_{ L^4([0,T]; W^{1,\infty})}.
 \end{equation*}

For the right-hand side of \eqref{TildePhiStrichartz}, using Theorem \ref{t:ParaComposition} and \eqref{varphiLTwo},
\begin{align*}
&\|\tilde{F} \|_{L^4(I; H^{s -\frac{1}{2}-\epsilon})} \lesssim_{\|(\W, R) \|_{L^\infty_t \mathcal{H}^s_\alpha}} 1+\|(\W, R)\|_{L^4(I;\mathcal{W}^r)}, \\ 
& \| \tilde{\Phi}\|_{L^\infty(I; H^{s})} \leq \|\kappa_g^{*}\Phi \|_{L^\infty(I; H^{s})} + \|R_{line}\Phi \|_{L^\infty(I; H^{s})} \lesssim \| (\W,R)\|_{L_t^\infty(I; \mathcal{H}^s)}, \\
&\|(\ub \circ \kappa)(\partial_\alpha \chi \circ \kappa) + \partial_t \chi \circ \kappa \|_{L^\infty([0,T];L^\infty)} \leq \|\ub  \|_{L^\infty_{t,\alpha}} \|\partial_\alpha \chi  \|_{L^\infty_{t,\alpha}} + \| \partial_t \chi \|_{L^\infty_{t,\alpha}}\\
\lesssim& \|(\W,R) \|_{L_t^\infty(I; \mathcal{H}^s)}^2 + \|(\W,R) \|_{L_t^\infty(I; \mathcal{H}^s)}.
\end{align*}
To estimate $ (\ub \circ \kappa)(\partial_\alpha \chi \circ \kappa) + \partial_t \chi \circ \kappa$ in $ L^4(I; W^{1,\infty})$, note that
\begin{equation*}
\partial_\alpha((\ub \circ \kappa)(\partial_\alpha \chi \circ \kappa) + \partial_t \chi \circ \kappa) =  (\partial_\alpha \ub \circ \kappa)(\partial_\alpha \chi \circ \kappa)\partial_\alpha \kappa +  (\ub \circ \kappa)(\partial_\alpha^2 \chi \circ \kappa)\partial_\alpha \kappa  + \partial_t\partial_\alpha \chi \circ \kappa \cdot \partial_\alpha \kappa,
\end{equation*}
so that we can bound using \eqref{UbCOneStar}, \eqref{UbHsEst}, \eqref{HsChiAlpha}, and \eqref{TimeDerivativeChiTwo},
\begin{align*}
&\|(\ub \circ \kappa)(\partial_\alpha \chi \circ \kappa) + \partial_t \chi \circ \kappa \|_{L^4(I;L^\infty)} \leq T^{\frac{1}{4}} |(\ub \circ \kappa)(\partial_\alpha \chi \circ \kappa) + \partial_t \chi \circ \kappa \|_{L^\infty(I;L^\infty)} \\
\lesssim&  T^{\frac{1}{4}}\|(\W,R) \|_{L_t^\infty(I; \mathcal{H}^s)}^2 +  T^{\frac{1}{4}}\|(\W,R) \|_{L_t^\infty(I; \mathcal{H}^s)},\\
& \|(\partial_\alpha \ub \circ \kappa)(\partial_\alpha \chi \circ \kappa)\partial_\alpha \kappa \|_{L^4(I;L^\infty)} \leq \|\partial_\alpha \ub \|_{L^4(I;L^\infty)} \|\partial_\alpha \chi \|_{L^\infty_{t,\alpha}} \|\partial_\alpha \kappa\|_{L^\infty_{t,\alpha}} \\
\lesssim& \|(\W, R)\|_{L^4(I;\mathcal{W}^r)}\left(1+\|(\W,R)\|_{L^\infty(I;\mathcal{H}^s)}\right)^2,\\
& \| (\ub \circ \kappa)(\partial_\alpha^2 \chi \circ \kappa)\partial_\alpha \kappa\|_{L^4(I;L^\infty)}\leq \|\ub \|_{L^\infty_{t,\alpha}} \|\partial_\alpha^2 \chi \|_{L^4(I;L^\infty)} \|\partial_\alpha \kappa\|_{L^\infty_{t,\alpha}}\\
\lesssim& \|(\W, R)\|_{L^4(I;\mathcal{W}^r)}\left(1+\|(\W,R)\|_{L^\infty(I;\mathcal{H}^s)}\right)^2, \\
&\|\partial_t\partial_\alpha \chi \circ \kappa \cdot \partial_\alpha \kappa \|_{L^4(I;L^\infty)}\leq \|\partial_t \partial_\alpha \chi \|_{L^4(I; L^\infty)} \|\partial_\alpha \kappa \|_{L^\infty(I;L^\infty)}\\
\lesssim& \|(\W, R)\|_{L^4(I;\mathcal{W}^r)}\left(1+\|(\W,R)\|_{L^\infty(I;\mathcal{H}^s)}\right).
\end{align*}
Therefore, we have shown that
\begin{equation*}
  \| \tilde{\Phi}\|_{L^4(I; C_{*}^{s -\frac{1}{4}-\epsilon})} \leq \tilde{C},   
\end{equation*}
for some constant $\tilde{C}$ that depends on
\begin{equation*}
   \| (\W, R)\|_{ C^0(I; \mathcal{H}^s)} + \| (\W, R)\|_{ L^4(I; \mathcal{W}^r)}. 
\end{equation*}
It only suffices to obtain a lower bound for the left-hand side of \eqref{TildePhiStrichartz}.
By the definition of $\tilde{\Phi}$, 
\begin{equation*}
    \Phi \circ \kappa = \tilde{\Phi}+ \dot{T}_{\partial_\alpha \Phi\circ \kappa}\kappa.
\end{equation*}
Since we can estimate using the Sobolev inequality
\begin{equation*}
\|\dot{T}_{\partial_\alpha \Phi\circ \kappa}\kappa \|_{L^4(I; C_{*}^{s -\frac{1}{4}-\epsilon})} \lesssim \|\Phi\circ\kappa \|_{L^\infty_{t,\alpha}} \|\partial_\alpha \kappa\|_{L^4(I; C_{*}^{s -\frac{1}{4}-\epsilon} )}\lesssim T^{\frac{1}{4}}\|(\W,R) \|_{L^\infty(I; \mathcal{H}^s)}\|\W \|_{L^\infty(I; H^{s+\frac{1}{2}})},
\end{equation*}
we get $\Phi\circ \kappa \lesssim \tilde{C}$, so that by lemma \ref{t:Composition},
\begin{equation*}
\|T_p \eta \|_{L^4(I; C_{*}^{s -\frac{1}{4}-\epsilon} )}+ \|T_q \zeta \|_{L^4(I; C_{*}^{s -\frac{1}{4}-\epsilon} )} \lesssim\|\Phi \|_{L^4(I; C_{*}^{s -\frac{1}{4}-\epsilon} )} = \| (\Phi\circ \kappa) \circ \chi\|_{L^4(I; C_{*}^{s -\frac{1}{4}-\epsilon} )} \lesssim \tilde{C}.
\end{equation*}
To obtain an estimate for $\eta$, we only need to consider the leading part of the analysis for $T_{p^{(\frac{1}{2})}} \eta$,
\begin{equation*}
    \|T_{p^{(\frac{1}{2})}} \eta \|_{L^4(I; C_{*}^{s -\frac{1}{4}-\epsilon} )} \lesssim  \tilde{C}+  \|T_{p^{(-\frac{1}{2})}} \eta \|_{L^4(I; C_{*}^{s -\frac{1}{4}-\epsilon} )} \lesssim \tilde{C}.
\end{equation*}
$p^{(\frac{1}{2})}$ is an elliptic symbol,  $T_{1/ p^{(\frac{1}{2})}}$ is well-defined and is of order $-\frac{1}{2}$.
Thus,
\begin{equation*}
    \|\eta\|_{L^4(I; C_{*}^{s +\frac{1}{4}-\epsilon} )} \lesssim \|T_{1/ p^{(\frac{1}{2})}} T_{p^{(\frac{1}{2})}} \eta\|_{L^4(I; C_{*}^{s +\frac{1}{4}-\epsilon} )} + \|S \eta\|_{L^4(I; C_{*}^{s -\frac{1}{4}-\epsilon} )},
\end{equation*}
where $S  = \left(I - T_{1/ p^{(\frac{1}{2})}} T_{p^{(\frac{1}{2})}} \right)$ is an operator of order $-1$.
As a consequence, 
\begin{equation*}
 \|\eta\|_{L^4(I; C_{*}^{s +\frac{1}{4}-\epsilon} )} \lesssim \| T_{p^{(\frac{1}{2})}} \eta\|_{L^4(I; C_{*}^{s -\frac{1}{4}-\epsilon} )} + \|S \eta\|_{L^4(I; C_{*}^{s -\frac{1}{4}-\epsilon} )} \lesssim \tilde{C}.
\end{equation*}
Similarly, one can get
\begin{equation*}
 \|\zeta\|_{L^4(I; C_{*}^{s -\frac{1}{4}-\epsilon} )} \lesssim \tilde{C}.    
\end{equation*}
Finally,
\begin{equation*}
    \|(\W, R) \|_{L^4(I; C_{*}^{s +\frac{1}{4}-\epsilon} \times  C_{*}^{s -\frac{1}{4}-\epsilon} )} \lesssim  \|(\eta, \zeta) \|_{L^4(I; C_{*}^{s +\frac{1}{4}-\epsilon} \times  C_{*}^{s -\frac{1}{4}-\epsilon} )} \leq \tilde{C}^{'},
\end{equation*}
for some constant $\tilde{C}^{'}$ that depends on
\begin{equation*}
   \| (\W, R)\|_{ C^0(I; \mathcal{H}^s)} + \| (\W, R)\|_{ L^4(I; \mathcal{W}^r)}. 
\end{equation*}
This finishes the proof of Theorem \ref{t:MainTwo}, due to the fact that the space $W^{s,\infty}(\mathbb{R})$ coincides with $C^s_{*}(\mathbb{R})$ for non-integer $s$.
 \end{proof}

 \section{The local well-posedness of the Cauchy problem} \label{s:Cauchy}
 In this section, we follows the steps in the expository article of Ifrim-Tataru \cite{MR4557379}, and sketch the proof of Theorem \ref{t:MainWellPosed}.
For the details of the proof and the use of \textit{frequency envelopes}, we refer the interested readers to Section 5.1 in \cite{MR4557379}.
 The proof of the local well-posedness is divided into several steps.

 First, we truncate the frequency of initial data at $2^N$ and obtain a sequence of approximate solutions that converge weakly to the regular solution.
 Second, we obtain a contraction estimate for two different solutions of \eqref{e:WR} and therefore get the uniqueness in $\mathcal{H}^s$ for $s>\frac{3}{2}$.
 Next, we use the regular solutions to construct the rough solutions in $\mathcal{H}^s(\mathbb{R})$ for $\frac{3}{2}\geq s>\frac{5}{4}$.
 Finally, we prove the continuous dependence on the initial data.

 \subsection{Existence of regular solutions}
 Consider the initial data $(\W_0, R_0) \in \mathcal{H}^{\tau}$.
 For each positive integer $N$, we choose the frequency projection $P_{<N}$ that selects the frequencies less or similar than $2^N$.
 Then for fixed $N$, the  mollified system
  \begin{equation*} 
\left\{
\begin{aligned}
 &  T_{D_t}^{(<N)}\W +\partial_\alpha P_{<N}T_{(1+\W)(1-\bar{Y})}R = G_{<N}\\
& T_{D_t}^{(<N)} \W  +i\gamma P_{<N}R +i\sigma P_{<N}T_{J^{-\frac{1}{2}}(1-Y)^2}\W_{\alpha \alpha}- 3i\sigma P_{<N}T_{J^{-\frac{1}{2}}(1-Y)^3 \W_\alpha} \W_\alpha - igP_{<N}\W = K_{<N}, 
\end{aligned}
\right.
\end{equation*} 
where 
\begin{equation*}
T_{D_t}^{(<N)} = \partial_t + T_{\ub_N} \partial_\alpha, \quad \ub_N = P_{<N}\ub(P_{<N}\W, P_{<N}R),    
\end{equation*}
and 
\begin{equation*}
G_{<N} = P_{<N} G(P_{<N}\W, P_{<N}R), \quad   K_{<N} = P_{<N} K(P_{<N}\W, P_{<N}R),   
\end{equation*}
form a system of ordinary differential equations in $\mathcal{H}^s$.
The mollified system admits a local solution $(\W^{N}, R^{N})$.
Using the argument as in Theorem \ref{t:EnergyEstimate} and Theorem \ref{t:MainTwo}, one can show that for all $N\in \mathbb{N}_{+}$,
\begin{equation}
    \|(\W^{N}, R^{N} ) \|_{L^\infty(I;\mathcal{H}^s)} + \|(\W^{N}, R^{N} ) \|_{L^4(I;\mathcal{W}^r)}< +\infty, \quad 1<r<s-\frac{1}{4} \label{UniformBound}
\end{equation}
is uniformly bounded from above. 
Then the regular solution $(\W, R)$ is a weak limit on a subsequence of $(\W^{N}, R^{N} )$ as $N\rightarrow \infty$.

\subsection{Contraction estimate of the solution}
 Having constructed the regular solution of \eqref{e:WR}, we now sketch the proof of the contraction estimate for two solutions in weaker norms.
The uniqueness  of solutions of \eqref{e:WR} in $\mathcal{H}^s$ with $s>\frac{3}{2}$ follows directly from the contraction estimates.

 \begin{proposition} \label{t:Contraction}
 Let $(\W_j, R_j)$, $j=1,2$ be two solutions of \eqref{e:WR} on $I =[0,T]$, such that for $r>1, s>\frac{3}{2}$, 
 \begin{equation*}
(\W_j, R_j) \in L^\infty(I; \mathcal{H}^s(\mathbb{R}))\cap L^4(I; \mathcal{W}^r(\mathbb{R})).
\end{equation*}
Set Sobolev and H\"{o}lder control norms
\begin{equation*}
M^j_{s,T}: = \|(\W_j, R_j) \|_{L^\infty(I; \mathcal{H}^s)}, \quad N^j_{r,T}: = \|(\W_j, R_j) \|_{L^4(I;  \mathcal{W}^r)}.
\end{equation*}
Let $(\delta \W, \delta R): = (\W_1 -\W_2, R_1 -R_2)$.
Then for some constant $C$ that depends on $M^j_{s,T}, N^j_{r,T}$, $j=1,2$,
\begin{equation*}
   \|(\delta \W, \delta R) \|_{L^\infty(I; \mathcal{H}^{s-\frac{3}{2}})} + \|(\delta \W, \delta R) \|_{L^4(I; \mathcal{W}^{r-\frac{3}{2}})}  \leq C\| (\delta \W_0, \delta R_0)\|_{\mathcal{H}^{s-\frac{3}{2}}}.
\end{equation*}
 \end{proposition}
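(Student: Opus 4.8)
The plan is to repeat the entire reduction of Sections \ref{s:reduction}--\ref{s:Estimate} at the level of the \emph{difference} of two solutions, rather than for a single solution. First I would write down the equations satisfied by $(\delta\W,\delta R)$ by subtracting the two copies of \eqref{e:WR}. Because \eqref{e:WR} is quasilinear, this difference system is of the same paradifferential type as \eqref{e:ParaWater}, but with coefficients built from $(\W_1,R_1)$ (or symmetrically from $(\W_2,R_2)$, or an average), and with source terms that are \emph{bilinear} in $(\delta\W,\delta R)$ and in the two solutions. The key structural point — exactly as in the irrotational gravity-capillary case treated by Alazard--Burq--Zuily \cite{MR2931520} and de Poyferr\'e--Nguyen \cite{MR3770970} — is that the difference solves a linear paradifferential equation of the same order $\frac32$, so one expects to lose $\frac32$ derivatives relative to the regularity of the background solutions. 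This is why the estimate is stated in $\mathcal H^{s-\frac32}$ and $\mathcal W^{r-\frac32}$ rather than in $\mathcal H^s$, $\mathcal W^r$.

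Concretely, the steps are: (1) paralinearize the difference system, so that $(\delta\W,\delta R)$ satisfies a system of the form \eqref{e:ParaWater} with coefficients depending on, say, $(\W_1,R_1)$ and with a remainder $(\delta G,\delta K)$ obeying $\|(\delta G,\delta K)\|_{\mathcal H^{s-\frac32}}\lesssim_{M,N}(1+N^1_{r,T}+N^2_{r,T})\|(\delta\W,\delta R)\|_{\mathcal H^{s-\frac32}}$; here every error term in the paralinearization lemmas of Section \ref{s:reduction} that was quadratic in $(\W,R)$ becomes, after taking differences, a term that is linear in $(\delta\W,\delta R)$ with a coefficient controlled by one $\CalA$- or $\CalB$-type norm of one of the two solutions, and one invokes the same paraproduct estimates \eqref{HCHEstimate}--\eqref{CNegativeAlpha} as before. (2) Introduce Wahl\'en variables $(\delta\eta,\delta\zeta)=(\delta\W,\delta R-i\frac{\gamma}{2}\delta W)$ and symmetrize using the \emph{same} symbols $p,q,c$ from Proposition \ref{t:TwoEquivalence} built from $(\W_1,R_1)$, obtaining a single scalar paradifferential equation $T_{D_t}\delta\Phi - i(L^{1/2}T_cL^{1/2}+\frac{\gamma}{2}\mathrm{sgn}(D))\delta\Phi = \delta F$ of the type \eqref{e:SinglePara}, with $\|\delta F\|_{\mathcal H^{s-\frac32}}$ controlled by $(1+\CalB_1+\CalB_2)\|(\delta\W,\delta R)\|_{\mathcal H^{s-\frac32}}$; since $\CalB_j\lesssim N^j_{r,T}$ pointwise in $t$ after Sobolev embedding ($\mathcal W^r\hookrightarrow$ the relevant Zygmund spaces for $r>1$), this is integrable in $t$ in the $L^1_t$ sense once we put in the $L^4_t$ control norms together with H\"older in time. (3) Run the $L^2$ energy estimate of Section \ref{s:Energy} verbatim on $\delta\varphi = T_\wp\delta\Phi$ with $\wp=(c\ell)^{\frac{2(s-3/2)}{3}}$, giving
\begin{equation*}
\frac{d}{dt}\|\delta\varphi\|_{L^2}^2 \lesssim_{M} \big(1+\CalB_1+\CalB_2\big)\,\|(\delta\W,\delta R)\|_{\mathcal H^{s-\frac32}}^2,
\end{equation*}
and then Gronwall in $t$ (using $\int_0^T(1+\CalB_1+\CalB_2)\,d\tau \lesssim T + T^{3/4}(N^1_{r,T}+N^2_{r,T}) < \infty$) yields the $L^\infty_t\mathcal H^{s-\frac32}$ bound. (4) Finally, for the $L^4_t\mathcal W^{r-\frac32}$ bound, apply the para-composition of Theorem \ref{t:ParaComposition} and the Strichartz estimate Proposition \ref{t:StrichartzGeneral} to the scalar difference equation exactly as in the proof of Theorem \ref{t:MainTwo}, with $s$ replaced by $s-\frac32$ (this is where one needs $s>\frac32$, and a bit more — in practice $s-\frac32$ must exceed the threshold at which the Strichartz machinery applies, so the clean statement is for the difference in $\mathcal W^{r-\frac32}$ with $r>1$), using the $L^\infty_t\mathcal H^{s-\frac32}$ bound from step (3) as the input on the right-hand side.

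The main obstacle is bookkeeping rather than a new idea: one must check that \emph{every} error term produced in the long chain of paralinearization and symmetrization lemmas, when specialized to the difference of two solutions, is genuinely linear in $(\delta\W,\delta R)$ and carries a coefficient that is bounded by an $\CalA$-type norm (uniformly in $t$, absorbed into the implicit constant depending on $M^j_{s,T}$) times a $\CalB$-type norm (which only appears to the first power, is square-integrable or $L^{4/3}_t$-integrable in time, and is controlled by $N^j_{r,T}$). In particular one must be careful with the commutator and composition errors in the symmetrization step and with the $\partial_t p,\partial_t q$ terms, since $\partial_t\delta\W$ now appears and must be estimated from the paralinearized $\W$-equation \eqref{WParaMat} for the difference; this is where the loss of $\frac32$ (as opposed to, say, a single derivative) is forced. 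A secondary technical point is that the symbols $p,q,c$ used for both solutions are those of solution $1$, so the difference equation for $\delta\zeta$ inherits an extra term of the form $(T_{p_1}-T_{p_2})(\cdots)$; this is again linear in $(\delta\W,\delta R)$ via the Lipschitz dependence of $p,q,c$ on $(\W,\bar\W)$ and is handled by \eqref{CsProduct}, \eqref{HsProduct} and the Moser estimates, but it must not be overlooked.
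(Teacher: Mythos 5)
Your roadmap is essentially the paper's: subtract the two copies of \eqref{e:WR}, paralinearize the difference with all coefficients and symbols $p,q,c$ built from $(\W_1,R_1)$, pass to Wahl\'en variables and symmetrize, then run an energy estimate followed by the Strichartz estimate. The auxiliary-function contraction bounds you invoke ($\delta Y$, $\delta\ua$, $\delta\ub$) and the handling of $(T_{p_1}-T_{p_2})$-type residues are also exactly what the paper does.

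There is, however, a genuine gap in the way you propose to close the estimates. In step (3) you claim an energy inequality of the form
\begin{equation*}
\frac{d}{dt}\|\delta\varphi\|_{L^2}^2 \lesssim_M (1+\CalB_1+\CalB_2)\,\|(\delta\W,\delta R)\|_{\mathcal H^{s-\frac32}}^2,
\end{equation*}
and propose to Gronwall this \emph{alone} to get the $L^\infty_t\mathcal H^{s-\frac32}$ bound, and only afterwards feed that into the Strichartz estimate. This ordering fails because the source term for the difference system cannot be bounded purely by $\|(\delta\W,\delta R)\|_{\mathcal H^{s-\frac32}}$. Concretely, $K^\sharp$ contains coefficient-difference terms like $T_{J_2^{-1/2}(1-Y_2)^2-J_1^{-1/2}(1-Y_1)^2}\W_{2,\alpha\alpha}$ and $T_{\delta\ub}\partial_\alpha R_2$. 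These are low-high paraproducts where the \emph{low} factor is the $\delta$-quantity; the only way to put them in $\mathcal H^{s-\frac32}$ is via $\|T_a u\|_{H^{s-\frac32}}\lesssim\|a\|_{L^\infty}\|u\|_{H^{s-\frac32}}$, which produces contributions of the form $\|(\W_j,R_j)\|_{\mathcal H^{s}}\,\|(\delta\W,\delta R)\|_{\mathcal W^{r-\frac32}}$. In the regime where the Strichartz gain is actually needed ($r\in(s-\tfrac12,s-\tfrac14)$), the Sobolev embedding $\mathcal H^{s-\frac32}\hookrightarrow\mathcal W^{s-2}$ does \emph{not} dominate $\mathcal W^{r-\frac32}$, so these terms cannot be absorbed into $\|(\delta\W,\delta R)\|_{\mathcal H^{s-\frac32}}$, and the Gronwall step does not close by itself. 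The paper instead records precisely this two-term bound for $(G^\sharp,K^\sharp)$, writes the energy estimate \eqref{EnergyDiff} and Strichartz estimate \eqref{StrichartzDiff} each with a small prefactor ($T^{3/4}$, resp.\ $T^{\delta}$) in front of the \emph{sum} of both difference norms, and closes the two estimates \emph{simultaneously} by a bootstrap on short time subintervals $[T_k,T_{k+1}]$. Your sequential order should be replaced by this coupled absorption; once that is done the rest of your outline matches the paper.
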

In the following, for simplicity, when we write $f_j$, $j=1,2$, this means $f_j$ is a function of $(\W_j, R_j)$.
We also set $\delta f := f_1 -f_2$.

By direct computation using the definition, for $\mu>0$, we have the contraction estimates for the auxiliary functions:
\begin{align*}
&\|\delta Y\|_{H^\mu}\lesssim \|\delta \W \|_{H^\mu}, \quad \|\delta Y \|_{C^\mu_{*}} \lesssim \|\delta \W \|_{C^\mu_{*}}, \\
&\|\delta \ua\|_{H^\mu}\lesssim (\CalB_1 + \CalB_2)\|(\delta \W, \delta R) \|_{\mathcal{H}^\mu}, \quad \|\delta \ua\|_{C^\mu_{*}}\lesssim (\CalB_1 + \CalB_2)\|(\delta \W, \delta R) \|_{C^{\mu+\frac{1}{2}}_{*}\times C^\mu_{*}},\\
&\|\delta \ub\|_{H^\mu}\lesssim_{\CalA_1, \CalA_2}\|(\delta \W, \delta R) \|_{\mathcal{H}^\mu}, \quad \|\delta \ub\|_{C^\mu_{*}}\lesssim_{\CalA_1, \CalA_2}\|(\delta \W, \delta R) \|_{C^{\mu+\frac{1}{2}}_{*}\times C^\mu_{*}}.
\end{align*}

Recall that in Section \ref{s:reduction}, we paralinearize the water waves and rewrite it as the paradifferential system \eqref{e:ParaWater}:
 \begin{equation*} 
\left\{
\begin{aligned}
 &  T_{D_t} \W +\partial_\alpha T_{(1+\W)(1-\bar{Y})}R = G\\
& T_{D_t} R  +i\gamma R +i\sigma T_{J^{-\frac{1}{2}}(1-Y)^2}\W_{\alpha \alpha}- 3i\sigma T_{J^{-\frac{1}{2}}(1-Y)^3 \W_\alpha} \W_\alpha - ig\W =  K, 
\end{aligned}
\right.
\end{equation*}  
where the source terms $(G,K)$ are given by
\begin{align*}
&G =  \partial_\alpha (T_{(1+\W)(1-\bar{Y})}-T_{1+\W}T_{1-\bar{Y}})R -T_{\ub_\alpha}W  +\partial_\alpha T_{1+\W}\nP\Pi(\bar{Y}, R) - \partial_\alpha\nP\Pi(\W, \ub) \\
  +&i\frac{\gamma}{2}\partial_\alpha T_{\W}W - i\frac{\gamma}{2}\partial_\alpha T_{1+\W}T_{\bar{Y}}W - i\frac{\gamma}{2}\partial_\alpha T_{1+\W}\nP\Pi(W,\bar{Y}),\\
&K = -\nP T_{R_\alpha}\ub -\nP\Pi(R_\alpha, \ub)+i\nP[\ua Y]-igY\W -\nP[R\bar{R}_\alpha] -i\frac{\gamma}{2}\nP[W\bar{R}_\alpha-\bar{W}_\alpha R]\\
-& i\sigma (1-Y)\nP[ J^{-\frac{1}{2}} \W_{ \alpha}(1-Y)]_{\alpha} +i\sigma (1-Y) \nP [J^{-\frac{1}{2}}\bar{\W}_{ \alpha}(1-\bar{Y})]_{\alpha} +i\sigma T_{J^{-\frac{1}{2}}(1-Y)^2}\W_{\alpha \alpha}\\
-& 3i\sigma T_{J^{-\frac{1}{2}}(1-Y)^3 \W_\alpha} \W_\alpha.
\end{align*}

The difference of two solutions $(\delta \W, \delta R)$ satisfies the system
 \begin{equation*} 
\left\{
\begin{aligned}
 &  (\partial_t + T_{\ub_{1}}\partial_\alpha) \delta\W +\partial_\alpha T_{(1+\W_1)(1-\bar{Y}_1)}\delta R = G^\sharp\\
& (\partial_t + T_{\ub_{1}}\partial_\alpha) \delta R  +i\gamma \delta R +i\sigma T_{J_{1}^{-\frac{1}{2}}(1-Y_1)^2}\delta \W_{\alpha \alpha}- 3i\sigma T_{J_{1}^{-\frac{1}{2}}(1-Y_1)^3 \W_{1,\alpha}} \delta\W_\alpha - ig\delta \W =  K^\sharp, 
\end{aligned}
\right.
\end{equation*}  
where the source terms $(G^\sharp, K^\sharp)$ of  difference equations are
\begin{align*}
G^\sharp &= \delta G -T_{\delta \ub}\partial_\alpha\W_2 + \partial_\alpha T_{(1+\W_2)(1-\bar{Y}_2)-(1+\W_1)(1-\bar{Y}_1)}R_2, \\
K^\sharp &= \delta K -T_{\delta \ub}\partial_\alpha R_2 +i\sigma T_{J_{2}^{-\frac{1}{2}}(1-Y_2)^2 -J_{1}^{-\frac{1}{2}}(1-Y_1)^2} \W_{2, \alpha \alpha} +3i\sigma T_{J_{1}^{-\frac{1}{2}}(1-Y_1)^3 \W_{1,\alpha}-J_{2}^{-\frac{1}{2}}(1-Y_2)^3 \W_{2,\alpha}} \W_{2,\alpha}.
\end{align*}

As in the computation in Section \ref{s:reduction}, one can obtain the difference bound
\begin{align*}
\|( G^\sharp, K^\sharp)(t)\|_{\mathcal{H}^{s-\frac{3}{2}}} &\lesssim  \left(\| (\W_1 ,  R_1)(t) \|_{\mathcal{W}^{r}} + \| (\W_2 ,  R_2)(t) \|_{\mathcal{W}^{r}}\right)\|(\delta \W, \delta R)(t) \|_{\mathcal{H}^{s-\frac{3}{2}}} \\
&+ \left(\| (\W_1 ,  R_1)(t) \|_{\mathcal{H}^{s}} + \| (\W_2 ,  R_2)(t) \|_{\mathcal{H}^{s}}\right)\|(\delta \W, \delta R)(t) \|_{\mathcal{W}^{r-\frac{3}{2}}}. 
\end{align*}

Rewriting the difference equation in Wahl\'{e}n variables and using the symmetrization as in Section \ref{s:reduction}, we can then obtain the a priori energy estimate and the Strichartz estimate.
The energy estimate gives
\begin{equation}
\begin{aligned}
 \|(\delta \W, \delta R)(T) \|_{\mathcal{H}^{s-\frac{3}{2}}} &\lesssim \Big(\| (\delta \W_0, \delta R_0)\|_{\mathcal{H}^{s-\frac{3}{2}}} \\
 +& T^{\frac{3}{4}}(\| (\delta \W, \delta R)\|_{L^4(I;\mathcal{W}^{r-\frac{3}{2}})}+ \| (\delta \W, \delta R)\|_{L^\infty(I;\mathcal{H}^{s-\frac{3}{2}})})\Big),   
\end{aligned} \label{EnergyDiff}
\end{equation}
where the implicit constant depends on $M^j_{s,T}, N^j_{r,T}$, $j=1,2$.
Furthermore, by the Strichartz estimate, 
\begin{equation}
\| (\delta \W, \delta R)\|_{L^4(I;\mathcal{W}^{r-\frac{3}{2}})} \lesssim T^{\delta} \left(\| (\delta \W, \delta R)\|_{L^4(I;\mathcal{W}^{r-\frac{3}{2}})}+ \| (\delta \W, \delta R)\|_{L^\infty(I;\mathcal{H}^{s-\frac{3}{2}})} \right), \label{StrichartzDiff}
\end{equation}
for some time $0<T<1$ with $\delta>0$.
Separating the time interval $I = [0,T]$ into $[0,T_1], [T_1, T_2], \cdots [T_n, T]$, on each interval we combine the energy estimate \eqref{EnergyDiff} and the Strichartz estimate \eqref{StrichartzDiff}, which gives the proof of Proposition \ref{t:Contraction}.  

\subsection{Construction of rough solutions}
Following the work in \cite{MR3535894} as well as \cite{MR4557379}, we truncate the frequency the initial data at $2^{N}$ for $(W_{<N}(0), Q_{<N}(0))$ and $(\W_{<N}(0), R_{<N}(0))$.
We can then establish the bound of regularized initial data using frequency envelopes.
The corresponding solutions will be regular, with a uniform lifespan bound as in \eqref{UniformBound}.
Using the energy bounds for the solutions and the difference bounds $(\W^{N+1}-\W^{N}, R^{N+1}-R^{N})$ in $\mathcal{H}^{s}$, the sequence $(\W^{N}, R^{N})$ converges to a solution $(\W, R)$ with uniform $\mathcal{H}^{s}$ bound in time interval $[0, T]$.
This further shows that  when $\frac{5}{4}<s \leq \frac{3}{2}$, the solution is unique in $\mathcal{H}^s$ in the sense that it is the unique limit of regular solutions.

\subsection{Continuous dependence on the data} 
For any sequences $(\W_{j,0}, R_{j,0})$ that converge to initial data $(\W_0, R_0)$ in $\mathcal{H}^{s}$, we truncate the data at frequency $2^N$ and obtain the corresponding regular solutions $(\W_j^N, R_j^N)$, respectively $(\W^N, R^N)$.
we have the continuous dependence on the data for the regular solutions, for each $N$,
\begin{equation*}
(\W^N_j, R^N_j)- (\W^N, R^N) \rightarrow 0 \quad \text{in } \mathcal{H}^\tau, \quad \tau> \frac{3}{2}.
\end{equation*}
On the other hand, for the initial data, we let $N$ go to infinity,
\begin{equation*}
(\W^N_{j,0}, R^N_{j,0})- (\W_{j,0}, R_{j,0})
\rightarrow 0 \quad \text{in } \mathcal{H}^s, \quad \text{uniformly in } j.
\end{equation*}
Using the notation of frequency envelopes, we get the uniform convergence for the solution:
\begin{equation*}
(\W^N_j, R^N_j)- (\W_j, R_j) \rightarrow 0 \quad \text{in } \mathcal{H}^s, \quad \text{uniformly in } j.
\end{equation*}
 We can again let $N$ go to infinity to  further conclude that
 \begin{equation*}
  (\W_j, R_j)-(\W, R)\rightarrow 0 \quad \text{in } \mathcal{H}^{s}.
 \end{equation*}
 This shows the continuous dependence on the data for solutions in $\mathcal{H}^{s}$.

\bibliography{ww}
\bibliographystyle{plain}
\end{document}